\documentclass[11pt]{article}
\usepackage{graphicx}
\usepackage{amsmath}
\usepackage{amssymb}
\usepackage{amsfonts}
\usepackage{xspace}
\usepackage{subfigure}
\usepackage{mathtools}
\usepackage{bm}
\usepackage{stmaryrd}
\usepackage[colorlinks=true,
            linkcolor=blue,
            urlcolor=blue,
            citecolor=blue]{hyperref}
\usepackage{braket}
\usepackage{cleveref}
\usepackage{multirow}
\usepackage{xcolor}
\usepackage{comment}
\usepackage{stackrel}
\usepackage{enumitem}

\usepackage[utf8]{inputenc}
\usepackage{pgfplots}
\pgfplotsset{compat=newest}
\usepgfplotslibrary{groupplots}
\usepgfplotslibrary{dateplot}
\usepackage{nicefrac}
\usepackage{booktabs}

\usepackage{tcolorbox}
\usepackage{mathabx}

\crefname{equation}{}{}
\crefname{table}{Table}{Tables}
\crefname{figure}{Figure}{Figures}
\crefname{section}{Section}{Sections}

\crefname{theorem}{Theorem}{Theorems}
\crefname{remark}{Remark}{Remarks}
\crefname{lemma}{Lemma}{Lemmas}
\crefname{proposition}{Proposition}{Propositions}
\crefname{definition}{Definition}{Definitions}
\crefname{observation}{Observation}{Observations}

\newcommand{\x}{\mathring{x}}

\usepackage{todonotes}

\newcommand{\triang}{\mathcal{T}}

\makeatletter
\newcommand{\fcdot}{\,\cdot\,}
\newcommand{\fcarg}[1]{\def\fc@rg{#1}\ifx\fc@rg\empty\fcdot\else\fc@rg\fi}
\makeatother
\newcommand{\abs}[1]{\lvert\fcarg{#1}\rvert}

\newcommand{\field}{\mathbb}
\newcommand{\reals}{\field{R}}
\newcommand{\R}{\reals} %
\newcommand{\N}{\field{N}} %

\newcommand{\TT}{\mathcal{T}}

\newcommand{\node}{n}

\newcommand{\abbr}[1][abbrev]{#1.\ }%

\newcommand{\ie}{\abbr[i.e]}

\newcommand{\wrt}{\abbr[w.r.t]}

\newcommand{\pwl}{\abbr[pwl]}

\newcommand{\Pwl}{\abbr[Pwl]}

\makeatletter
\newcommand{\objref}[4]{\def\obj@rg{#4}%
  #1\ifx\obj@rg\empty#2\else#3\xspace\ref{#4}--\fi\ref}
\makeatother

\newcommand{\define}{\coloneqq}%

\newcounter{claims}

\let\savevec\vec
\usepackage{accents}
\let\vec\savevec

\usepackage[utf8]{inputenc}
\usepackage{pgfplots}
\DeclareUnicodeCharacter{2212}{−}
\usepgfplotslibrary{groupplots,dateplot}
\usetikzlibrary{patterns,shapes.arrows}
\usetikzlibrary{positioning,chains,fit,shapes,calc}
\pgfplotsset{compat=newest}

\usetikzlibrary{shapes.geometric,calc,arrows.meta,decorations.pathreplacing,positioning}

\usepackage{amsthm}
\newtheorem{theorem}{Theorem}[section]
\newtheorem{lemma}[theorem]{Lemma}
\newtheorem{proposition}[theorem]{Proposition}
\newtheorem{corollary}[theorem]{Corollary}
\newtheorem{conjecture}{Conjecture}[section]
\theoremstyle{definition}
\newtheorem{definition}[theorem]{Definition}

\theoremstyle{remark}
\newtheorem{remark}[theorem]{Remark}
\usepackage{etoolbox}
\AtBeginEnvironment{remark}{\itshape}

\crefname{lemma}{Lemma}{Lemmas}
\Crefname{lemma}{Lemma}{Lemmas}
\crefname{conjecture}{Conjecture}{Conjectures}
\Crefname{conjecture}{Conjecture}{Conjectures}

\usepackage[margin=1in]{geometry}

\usepackage{xcolor}

\usepackage[framemethod=TikZ]{mdframed}
\mdfdefinestyle{summarybox}{
  linewidth=0.5pt,
  roundcorner=0pt,
  innertopmargin=8pt,
  innerbottommargin=8pt,
  skipabove=12pt,
  skipbelow=12pt
}

\title{Optimal triangulations for piecewise linear approximations of non-convex variable products}

\author{
Robert Burlacu\thanks{Fraunhofer Institute for Integrated Circuits IIS, D-90411 N\"urnberg, Germany. \texttt{robert.burlacu@iis.fraunhofer.de}} 
\and 
Lukas Hager\thanks{Friedrich-Alexander-Universit\"at Erlangen-N\"urnberg, D-91058 Erlangen, Germany. \texttt{lukas.hager@fau.de}}
\and 
Robert Hildebrand\thanks{Grado Department of Industrial and Systems Engineering, Virginia Tech, Blacksburg, Virginia, USA. \texttt{rhil@vt.edu}. Partially funded by AFOSR grant FA9550-21-1-0107.}
}

\date{}

\begin{document}

\maketitle

\let\thefootnote\relax\footnotetext{We acknowledge the financial support by the Bavarian Ministry of Economic Affairs, Regional Development and Energy through the Center for Analytics -- Data -- Applications (ADA-Center) within the framework of ``BAYERN DIGITAL II''. Any opinions, findings, and conclusions or recommendations expressed in this material are those of the authors and do not necessarily reflect the views of the funding agencies.}

\begin{abstract}
\noindent We show optimal triangulations for piecewise linear (\pwl) approximations of indefinite quadratic functions over the plane.
Optimal triangulations have minimum triangle density while allowing a \pwl approximation that fulfills a prescribed error bound measured in the $L_\infty$-norm. 
In 2000, Pottmann et al. \cite{Pottmann:2000} proved optimal triangulations for \pwl interpolations and conjectured that these are also optimal for general \pwl approximations.
This conjecture was refuted in 2018 by Atariah et al. \cite{Atariah:2018}, who allowed a constant deviation at the vertices of the triangles and decreased the triangle density by roughly 3\%, though they left open whether their construction was optimal.
In this paper, we resolve this open question: allowing varying deviations at vertices reduces the optimal triangle density by 25\% compared to Atariah et al., and we prove this is globally optimal.
We thus show that the potential of general \pwl approximations is significantly lower for indefinite than for definite quadratic functions, where the triangle density can be halved when allowing general approximations compared to interpolations.\footnote{For definite quadratic functions $F(x,y) = x^2 + y^2$, Pottmann et al.\ \cite{Pottmann:2000} showed optimal interpolation has density $\propto 1/\varepsilon$, while Atariah et al.\ \cite{Atariah:2018} showed optimal general approximation has density $\propto 1/(2\varepsilon)$---a 50\% reduction. For indefinite functions, our improvement from interpolation ($1/(2\sqrt{5}\varepsilon)$) to general ($3\sqrt{3}/(32\varepsilon)$) is only $\approx 27\%$.}
Furthermore, we prove that among parallelogram tilings---triangulations built from translated copies of a triangle and its point-reflection---the constant-deviation construction of Atariah et al.\ is optimal when continuity of the \pwl approximation is required. We conjecture that this optimality extends to all continuous triangulations, not just those based on parallelograms.
\medskip
\noindent\textbf{Keywords:} Quadratic Programming, Piecewise Linear Approximation, Triangulation.
\end{abstract}

\section{Introduction}
\label{sec:intro}

We consider piecewise linear (\pwl) approximations of the graph
of an indefinite quadratic function $ F\colon \R^2 \to \R,$
$$ F(x, y) = ax^2+2bxy+cy^2+dx+ey+g, $$
where $ac-b^2 < 0$, so the graph of $F$ is a saddle surface.
A \pwl approximation $ f \colon \R^2 \to \R $ of~$F$
is defined by an underlying triangulation
$ \triang = \{T_k\}_{k \in \N}$ of $\R^2$---where~$f$ is linear over each triangle~$ T_k $---together with a prescribed signed offset (deviation) from $F$ at each vertex of the triangulation.
We measure the approximation error between $F$ and $f$
in the $L_{\infty}$-norm, \ie the maximum pointwise deviation.
Our goal is to find triangulations that achieve a prescribed approximation error using the minimum number of triangles.
Optimal triangulations for definite quadratic functions were already established by Pottmann et al.\ \cite{Pottmann:2000} and Atariah et al.\ \cite{Atariah:2018}; this paper resolves the indefinite case.

\Cref{fig:pwl-types} illustrates the key concepts: the quadratic surface of the standard indefinite quadratic form $F(x,y) = xy$, a continuous \pwl approximation where the approximating surface is connected, and a general (discontinuous) \pwl approximation where the linear pieces may have gaps at shared boundaries. This paper focuses on general approximations, which provide greater flexibility and can achieve lower triangle density than continuous approximations.

\begin{figure}[ht]
\centering
\begin{tikzpicture}[scale=0.9]
\begin{scope}[shift={(-5,0)}]
\begin{axis}[
    view={60}{25},
    width=5cm, height=4.5cm,
    xlabel={$x$}, ylabel={$y$}, zlabel={$z$},
    xlabel style={font=\scriptsize},
    ylabel style={font=\scriptsize},
    zlabel style={font=\scriptsize},
    tick label style={font=\tiny},
    title={\footnotesize (a) Surface $F(x,y)=xy$},
    title style={yshift=-2pt},
    domain=-1.5:1.5,
    y domain=-1.5:1.5,
    samples=20,
    zmin=-2.5, zmax=2.5,
]
\addplot3[surf, shader=interp, colormap/cool, opacity=0.8] {x*y};
\end{axis}
\end{scope}

\begin{scope}[shift={(0,0)}]
\begin{axis}[
    view={60}{25},
    width=5cm, height=4.5cm,
    xlabel={$x$}, ylabel={$y$}, zlabel={$z$},
    xlabel style={font=\scriptsize},
    ylabel style={font=\scriptsize},
    zlabel style={font=\scriptsize},
    tick label style={font=\tiny},
    title={\footnotesize (b) Continuous PWL},
    title style={yshift=-2pt},
    zmin=-2.5, zmax=2.5,
]
\addplot3[patch, patch type=triangle, draw=blue!80, line width=0.8pt, fill=blue!10, opacity=0.5]
coordinates {
(-1.5,-1.5,-2.5) (-1.5,0,-2.5) (0,-1.5,-2.5)
(-1.5,0,-2.5) (0,0,-2.5) (0,-1.5,-2.5)
(-1.5,0,-2.5) (-1.5,1.5,-2.5) (0,0,-2.5)
(-1.5,1.5,-2.5) (0,1.5,-2.5) (0,0,-2.5)
(0,-1.5,-2.5) (0,0,-2.5) (1.5,-1.5,-2.5)
(0,0,-2.5) (1.5,0,-2.5) (1.5,-1.5,-2.5)
(0,0,-2.5) (0,1.5,-2.5) (1.5,0,-2.5)
(0,1.5,-2.5) (1.5,1.5,-2.5) (1.5,0,-2.5)
};
\addplot3[patch, patch type=triangle, shader=flat, draw=black!70, line width=0.6pt, fill=blue!30, opacity=0.7]
coordinates {
(-1.5,-1.5,-2.25) (-1.5,0,0) (0,-1.5,0)
(-1.5,0,0) (0,0,0) (0,-1.5,0)
(-1.5,0,0) (-1.5,1.5,-2.25) (0,0,0)
(-1.5,1.5,-2.25) (0,1.5,0) (0,0,0)
(0,-1.5,0) (0,0,0) (1.5,-1.5,-2.25)
(0,0,0) (1.5,0,0) (1.5,-1.5,-2.25)
(0,0,0) (0,1.5,0) (1.5,0,0)
(0,1.5,0) (1.5,1.5,2.25) (1.5,0,0)
};
\end{axis}
\end{scope}

\begin{scope}[shift={(5,0)}]
\begin{axis}[
    view={60}{25},
    width=5cm, height=4.5cm,
    xlabel={$x$}, ylabel={$y$}, zlabel={$z$},
    xlabel style={font=\scriptsize},
    ylabel style={font=\scriptsize},
    zlabel style={font=\scriptsize},
    tick label style={font=\tiny},
    title={\footnotesize (c) Discontinuous PWL},
    title style={yshift=-2pt},
    zmin=-2.5, zmax=2.5,
]
\addplot3[patch, patch type=triangle, draw=orange!80, line width=0.8pt, fill=orange!10, opacity=0.5]
coordinates {
(-1.5,-1.5,-2.5) (-1.5,0,-2.5) (0,-1.5,-2.5)
(-1.5,0,-2.5) (0,0,-2.5) (0,-1.5,-2.5)
(-1.5,0,-2.5) (-1.5,1.5,-2.5) (0,0,-2.5)
(-1.5,1.5,-2.5) (0,1.5,-2.5) (0,0,-2.5)
(0,-1.5,-2.5) (0,0,-2.5) (1.5,-1.5,-2.5)
(0,0,-2.5) (1.5,0,-2.5) (1.5,-1.5,-2.5)
(0,0,-2.5) (0,1.5,-2.5) (1.5,0,-2.5)
(0,1.5,-2.5) (1.5,1.5,-2.5) (1.5,0,-2.5)
};
\addplot3[patch, patch type=triangle, shader=flat, draw=black!70, line width=0.6pt, fill=orange!40, opacity=0.7]
coordinates {
(-1.5,-1.5,-1.75) (-1.5,0,0.5) (0,-1.5,0.5)
(-1.5,0,-0.5) (0,0,0.5) (0,-1.5,-0.5)
(-1.5,0,0.5) (-1.5,1.5,-1.75) (0,0,-0.5)
(-1.5,1.5,-2.75) (0,1.5,0.5) (0,0,0.5)
(0,-1.5,-0.5) (0,0,-0.5) (1.5,-1.5,-2.75)
(0,0,0.5) (1.5,0,0.5) (1.5,-1.5,-1.75)
(0,0,-0.5) (0,1.5,-0.5) (1.5,0,-0.5)
(0,1.5,0.5) (1.5,1.5,1.75) (1.5,0,0.5)
};
\end{axis}
\end{scope}
\end{tikzpicture}
\caption{Illustration of \pwl approximations. (a) The saddle surface $F(x,y) = xy$. (b) A continuous \pwl approximation: the linear pieces meet along shared edges, forming a connected surface. (c) A general (discontinuous) \pwl approximation: linear pieces may have different values at shared vertices, creating gaps. In (b) and (c), the triangulation is shown projected onto the $xy$-plane. General approximations allow larger triangles for the same error bound.}
\label{fig:pwl-types}
\end{figure}
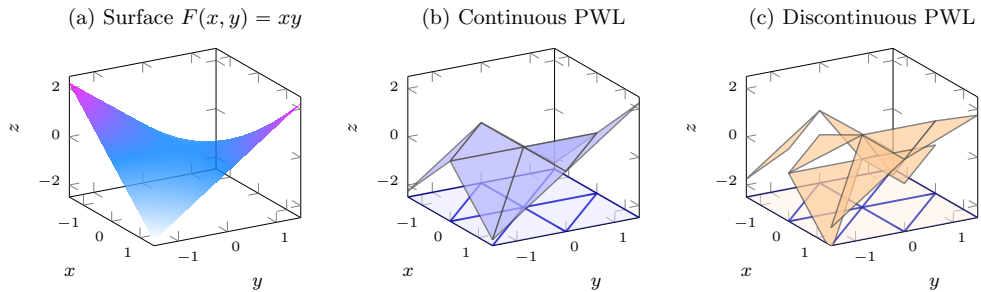

\paragraph{Scope: Unbounded Domains}
Throughout this paper, we study triangulations of the \emph{unbounded} domain $\R^2$. For compact domains, the optimal triangulation depends on the specific domain shape and boundary conditions, and may differ significantly from our results. A detailed study on \pwl approximations of indefinite quadratic forms over rectangular domains can be found in \cite{hager2023approximation}.
On $\R^2$, translation and reflection invariance (see \cref{sec:reduction}) allow us to reduce the problem to optimizing a single triangle that tiles the plane. This reduction does not apply to bounded domains, where boundary effects and domain-specific geometry may permit triangulations with lower density than those achievable through periodic tilings.
\paragraph{Triangle Density}

Since the domain $\R^2$ is unbounded, we cannot simply count the number of triangles to compare triangulations. Instead, we use the concept of \emph{triangle density} from \cite{Atariah:2018}, which measures the asymptotic number of triangles per unit area (see Definition~\ref{def:triangle-density} for the formal statement). Intuitively, triangle density counts how many triangles intersect a large square $Q_\ell$ of side length $\ell$, normalized by its area, as $\ell \to \infty$. \Cref{fig:triangle-density} illustrates this concept.

\begin{figure}[ht]
\centering
\makebox[\textwidth][c]{%
\begin{tikzpicture}[scale=0.85]
\foreach \i in {-3,...,4} {
    \foreach \j in {-3,...,4} {
        \draw[thin, blue!40] (\i,\j) -- (\i+1,\j) -- (\i,\j+1) -- cycle;
        \draw[thin, blue!40] (\i+1,\j) -- (\i+1,\j+1) -- (\i,\j+1) -- cycle;
    }
}
\draw[very thick, red!70!black, fill=red!10, fill opacity=0.3] (-1.5,-1.5) rectangle (2.5,2.5);
\node[red!70!black, font=\small] at (0.5, 2.8) {$Q_\ell$};
\foreach \i in {-2,...,2} {
    \foreach \j in {-2,...,2} {
        \fill[green!40, opacity=0.5] (\i,\j) -- (\i+1,\j) -- (\i,\j+1) -- cycle;
        \fill[green!40, opacity=0.5] (\i+1,\j) -- (\i+1,\j+1) -- (\i,\j+1) -- cycle;
    }
}
\draw[very thick, red!70!black] (-1.5,-1.5) rectangle (2.5,2.5);
\draw[<->, thick] (-1.5, -2.0) -- (2.5, -2.0);
\node[below, font=\small] at (0.5, -2.0) {$\ell$};
\draw[->, thick] (-3.5,0) -- (5.2,0) node[right] {$x$};
\draw[->, thick] (0,-3.5) -- (0,5.2) node[above] {$y$};
\node[align=left, font=\footnotesize, anchor=north west] at (5.5, 4.5) {
\textcolor{blue!60}{---} Triangulation $\TT$\\[2pt]
\textcolor{red!70!black}{\rule{8pt}{8pt}} Counting region $Q_\ell$\\[2pt]
\textcolor{green!50!black}{\rule{8pt}{8pt}} Triangles in count
};
\end{tikzpicture}%
}
\caption{Illustration of triangle density. The triangulation $\TT$ (blue) covers the plane. To compute the density, we count triangles intersecting the square $Q_\ell$ (highlighted in green) and divide by $\ell^2$. As $\ell \to \infty$, this ratio converges to the triangle density $\delta(\TT)$.}
\label{fig:triangle-density}
\end{figure}
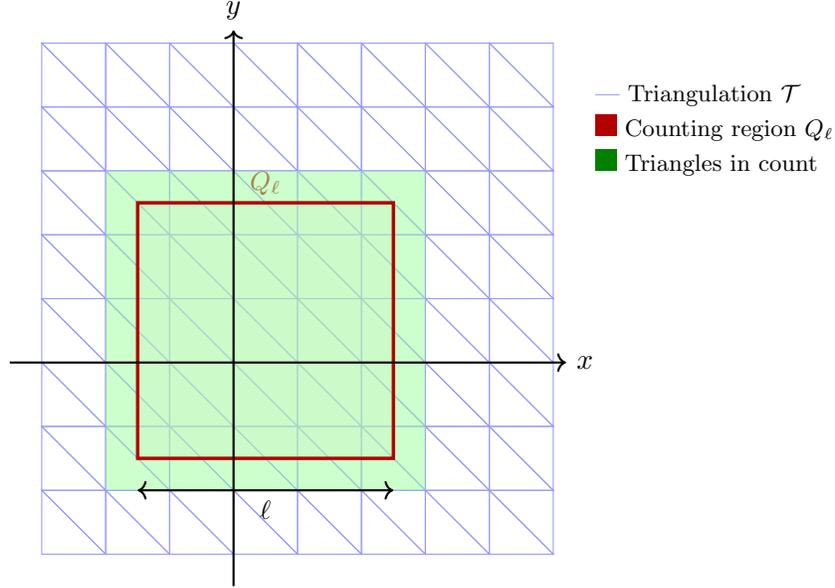

For a triangulation where all triangles have equal area $A$, the triangle density is approximately $1/A$ (exactly $1/A$ for tilings). Thus, \emph{larger triangles correspond to lower density}, and minimizing the triangle density is equivalent to maximizing the triangle area if all triangles have the same area.

\paragraph{Optimal Triangle Density}

Given an error bound $\varepsilon > 0$, we call a triangulation $\TT$ an \emph{$\varepsilon$-triangulation} if there exists a \pwl function $f$ based on $\TT$ such that $\|f - F\|_\infty \leq \varepsilon$. The \emph{optimal triangle density} $\delta^*_{F,\varepsilon}$ (or $\delta^*_\varepsilon$ for short, when the function $F$ is clear from context) is the infimum of triangle densities over all $\varepsilon$-triangulations (see Definition~\ref{def:optimal-triangle-density} for the formal statement, where $G$ denotes a general function).
A triangulation achieving this infimum is called \emph{$\varepsilon$-optimal}. Since the optimal density scales as $1/\varepsilon$, we derive triangulations parameterized by $\varepsilon$ and refer to them simply as \emph{optimal triangulations}.

\paragraph{Goal of This Paper}
Our primary goal is to determine the optimal triangle density for \pwl approximations of indefinite quadratic functions, and to characterize the triangulations that achieve it. We address this for several classes of approximations:
\begin{itemize}[nosep]
\item \textbf{General approximations}: The \pwl function $f$ may deviate from $F$ at vertices, and need not be continuous.
\item \textbf{Continuous approximations}: The function $f$ must be continuous (equivalent to requiring equal deviations at shared vertices).
\item \textbf{Interpolations}: The function $f$ must satisfy $f(v) = F(v)$ at all vertices $v$.
\item \textbf{Over/underestimations}: The function $f$ must satisfy $f \geq F$ (overestimation) or $f \leq F$ (underestimation) everywhere.
\end{itemize}
For each class, we seek to find the optimal triangle density and prove that it is indeed optimal---not merely an upper bound.

\paragraph{Reduction to a Single Triangle}
A key simplification allows us to reduce the problem of finding an optimal triangulation of the entire plane to finding a single optimal triangle. This reduction relies on two invariance properties of quadratic functions: invariance under translation (Lemma~\ref{lem:invariance_trans}) and invariance under point reflection (Lemma~\ref{lem:invariance_reflection}). While these properties were known in $\R^2$ for specific cases \cite{Atariah:2018,Pottmann:2000}, we state them for general quadratic forms in $\R^n$. For $n=2$, these invariances yield a reduction to a single triangle (Lemma~\ref{lem:area_density}) because a triangle and its point-reflection, which has the same approximation accuracy, form a parallelogram that tiles the plane. 

These properties imply that if a triangle $T$ admits a linear $\varepsilon$-approximation, then the plane can be tiled by translated copies of $T$ and its reflection $T'$, with each copy admitting an $\varepsilon$-approximation. The resulting triangulation has density exactly $1/A(T)$, where $A(T)$ is the area of $T$ (see Lemma~\ref{lem:area_density}).

Consequently, \emph{minimizing triangle density with respect to $\varepsilon$  is equivalent to maximizing the area of a single triangle that admits an $\varepsilon$-approximation}. This transforms the infinite-dimensional problem of optimizing over all triangulations into a finite-dimensional optimization problem over triangle vertices and deviations. We formalize and solve this optimization problem in \cref{sec:solution}.

\subsection{Related Work}
\label{sec:related-work}

The problem of optimal triangulations for function approximation has a rich history in computational geometry; see \cite{BernEppstein:1995,Toth:2017} for comprehensive surveys.
The principle that optimal triangles for piecewise linear interpolation should be aligned with Hessian eigenvectors was established by D'Azevedo and Simpson \cite{DazevadoSimpson:1991} and extended by D'Azevedo \cite{Dazevado:1991}; our results extend this to the indefinite case with non-interpolatory approximations.
\paragraph{Definite case}
The specific problem of \pwl approximations of quadratic functions over the plane was first addressed by Pottmann et al.\ \cite{Pottmann:2000} in 2000. 
They completely solved the \emph{definite} case ($ac - b^2 > 0$), where the graph of $F$ is an elliptic paraboloid. In this setting, the quadratic form induces a Euclidean metric on $\R^2$, and the approximation error over a triangle is controlled by its circumradius: the maximum error of the best linear approximation equals the squared circumradius (for interpolation) or half the squared circumradius (for general approximation). The optimal triangulations are therefore regular (equilateral) triangles---the classical solution to the lattice covering problem---and allowing vertex deviations exactly halves the triangle density compared to interpolation \cite{Atariah:2018}. The definite case thus reduces to a well-understood problem in discrete geometry.

\paragraph{Indefinite case}
The \emph{indefinite} case ($ac - b^2 < 0$), where the graph is a saddle surface, is fundamentally different. The quadratic form induces a \emph{pseudo-Euclidean} (indefinite) metric on $\R^2$, under which ``distances'' can be zero along asymptotic directions. Unlike the definite case, the approximation error depends not just on the size of the triangle but critically on its \emph{orientation}: the error along an edge is governed by the edge product $k = (x_2 - x_1)(y_2 - y_1)$ rather than the edge length, and axis-parallel edges contribute zero curvature since $F(x,y) = xy$ is linear along them. This orientation-dependence creates a genuinely nonconvex optimization landscape in which the interplay between triangle geometry and vertex deviations is much richer.

For this indefinite case, Pottmann et al.\ showed optimal \pwl approximations only under the restriction to interpolations, where the approximation must coincide with $F$ at the vertices of the triangulation.
They conjectured that optimal interpolations were also optimal \pwl approximations---in other words, that deviating at vertices would not lead to better approximations with fewer triangles.
Atariah et al.\ (2018) refuted this conjecture in their study \cite{Atariah:2018}, demonstrating that allowing a constant deviation $\Delta = \tfrac{\varepsilon}{3}$ at each vertex improves upon interpolation. This adjustment allowed for a modest reduction in triangle density, approximately 3\%, when compared to the optimal interpolations previously established. Specifically, the triangle density was reduced from $\tfrac{1}{2\sqrt{5}\varepsilon}$ ($\approx 0.224$, with $\varepsilon=1$) as reported by Pottmann et al.\ (2000) \cite{Pottmann:2000}, to $\tfrac{\sqrt{3}}{8\varepsilon}$ ($\approx 0.2165$, with $\varepsilon=1$)  for the indefinite standard form $F(x,y) = xy$. However, Atariah et al.\ explicitly noted that they did not know whether their constant-deviation construction was optimal, leaving this as an open question. The present paper resolves this question: varying deviations at vertices yield a further 25\% reduction in triangle density, and we prove this is globally optimal.

From an optimization perspective, indefinite quadratic functions such as $F(x,y) = xy$ arise naturally as bilinear terms in mixed-integer nonlinear programming (MINLP). 
The foundational work of McCormick \cite{McCormick:1976} introduced convex envelope relaxations for such terms, and subsequent research \cite{Boland:2017,hager:2022} has studied the quality of these relaxations. 
Piecewise linear approximations provide an alternative approach to handling bilinear terms, which motivates the study of optimal triangulations from an optimization perspective.
Related work on surface approximation using quadric error metrics \cite{GarlandHeckbert:1997,GarlandHeckbert:1999} and optimal Delaunay triangulations \cite{Chen:2004} addresses similar geometric optimization problems, though with different objective functions and constraints.

In practice, regular (axis-aligned) grids are commonly used for \pwl approximations due to their simplicity \cite{hager:2022}. However, regular grids are significantly suboptimal for bilinear functions. Since $F(x,y) = xy$ is linear along axis-parallel directions, axis-aligned edges provide no curvature information, and the approximation error depends solely on vertex deviations. The maximal admissible triangle areas of $4\varepsilon$ (non-continuous) and $2\varepsilon$ (continuous) for regular grids \cite{hager2023approximation} correspond to triangle densities of $\tfrac{1}{4\varepsilon}$ and $\tfrac{1}{2\varepsilon}$, respectively. In contrast, our optimally oriented triangulations achieve areas of $\tfrac{32\sqrt{3}}{9}\varepsilon$ (general) and $\tfrac{8\sqrt{3}}{3}\varepsilon$ (continuous)---improvements of roughly 54\% and 131\% over regular grids, respectively (see Table~\ref{tab:intro-summary} for all cases).
\paragraph{Contributions}

In this paper, we develop a unified framework for optimal \pwl approximations of indefinite quadratic functions that recovers known results and establishes new ones. Table~\ref{tab:intro-summary} summarizes all optimal triangulations.

\begin{table}[ht]
\centering
\begin{tabular}{|l|c|c|c|c|l|}
\hline
Approximation Type 
& Triangle Density 
& $\varepsilon=1$ 
& Optimal Area 
& $\varepsilon=1$ 
& Status \\
\hline
\textbf{General} 
& $\tfrac{3\sqrt{3}}{32\varepsilon}$ 
& 0.16 
& $\tfrac{32\sqrt{3}}{9}\varepsilon$ 
& 6.16 
& \textbf{New} \\
\hline
Continuous
& $\tfrac{\sqrt{3}}{8\varepsilon}$ 
& 0.22 
& $\tfrac{8\sqrt{3}}{3}\varepsilon$ 
& 4.62 
& constr.\ old\\
(parallelogram) 
& & & & & opt.\ new \\
\hline
Interpolation 
& $\tfrac{1}{2\sqrt{5}\varepsilon}$ 
& 0.22 
& $2\sqrt{5}\varepsilon$ 
& 4.47 
& Recovered \\
\hline
\textbf{Overestimation} 
& $\tfrac{3\sqrt{3}}{16\varepsilon}$ 
& 0.32 
& $\tfrac{16\sqrt{3}}{9}\varepsilon$ 
& 3.08 
& \textbf{New} \\
\hline
\textbf{Underestimation} 
& $\tfrac{3\sqrt{3}}{16\varepsilon}$ 
& 0.32 
& $\tfrac{16\sqrt{3}}{9}\varepsilon$ 
& 3.08 
& \textbf{New} \\
\hline
\textbf{Cont.\ Over/Under}
& $\tfrac{\sqrt{3}}{4\varepsilon}$ 
& 0.43 
& $\tfrac{4\sqrt{3}}{3}\varepsilon$ 
& 2.31 
& \textbf{New} \\(parallelogram)
& 
&  
& 
&
&\\
\hline
\end{tabular}

\caption{Summary of optimal triangulations for $F(x,y) = xy$ with error bound $\varepsilon$. Results marked ``New'' are first established in this paper. ``Constr.\ known; opt.\ new'' means the construction was given by Atariah et al.\ \cite{Atariah:2018} but optimality among parallelogram tilings is first proved here (see \cref{conj:continuous-optimality} for the extension to all continuous triangulations). ``Recovered'' means the result was established by Pottmann et al.\ \cite{Pottmann:2000} and our framework recovers it as a special case.}
\label{tab:intro-summary}
\end{table}

We now state our main results formally. By Lemma~\ref{lem:reduction-standard}, all results for the standard form $F(x,y) = xy$ extend to general indefinite quadratic functions via a scaling factor.

\begin{theorem}[General \pwl approximations]
\label{thm:optimal-triangulation}
For any approximation accuracy $\varepsilon > 0$, the optimal triangle density for \pwl $\varepsilon$-approximations of $F(x,y) = xy$ is $\tfrac{3\sqrt{3}}{32\varepsilon}$. The optimal triangles have area $\tfrac{32\sqrt{3}}{9}\varepsilon$.
\end{theorem}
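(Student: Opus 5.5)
By Lemma~\ref{lem:area_density}, and because every triangle of an $\varepsilon$-triangulation must itself carry a linear $\varepsilon$-approximation, the plan is to reduce the theorem to a single-triangle problem. Let $A^*(\varepsilon)$ be the supremum of areas of triangles $T$ admitting an affine $\ell$ with $\|\ell-F\|_{\infty,T}\le\varepsilon$.

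\textbf{Step 1: reduction to $A^*(\varepsilon)$.}
\begin{itemize}[nosep]
\item Upper bound on density: tiling by translated and point-reflected copies of an optimal triangle (Lemmas~\ref{lem:invariance_trans}, \ref{lem:invariance_reflection}, \ref{lem:area_density}) gives density $1/A^*$.
\item Lower bound on density: every triangle has area at most $A^*$. A counting argument with the squares $Q_\ell$ of Definition~\ref{def:triangle-density} then shows that any $\varepsilon$-triangulation has density at least $1/A^*$. The triangles meeting $Q_\ell$ cover $Q_\ell$, so at least $\ell^2/A^*$ of them meet it.
\end{itemize}
It therefore suffices to show $A^*(\varepsilon)=\tfrac{32\sqrt3}{9}\varepsilon$. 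By the scaling $(x,y)\mapsto(\sqrt\varepsilon x,\sqrt\varepsilon y)$ I take $\varepsilon=1$.

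\textbf{Step 2: finite-dimensional formulation.} Write the vertices as $v_1,v_2,v_3$ and set $d_i=\ell(v_i)-F(v_i)$. Use edge products $k_{ij}=(x_i-x_j)(y_i-y_j)$, which equal $B(v_i-v_j)$ for the bilinear form of $F$. For a point with barycentric coordinates $\lambda$, the identity $F(\sum\lambda_iv_i)=\sum\lambda_iF(v_i)-\sum_{i<j}\lambda_i\lambda_jB(v_i-v_j)$ gives the error function
\[
e(\lambda)=\sum_i\lambda_i d_i+\sum_{i<j}\lambda_i\lambda_j k_{ij}.
\]
The condition becomes $|e|\le1$ on the standard simplex.

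Next I express the area through the $k$'s. Write the edge vectors as $(a_i,b_i)$ with $\sum a_i=\sum b_i=0$. Then $k_3=k_1+k_2+(a_1b_2+a_2b_1)$, and hence
\[
4A^2=(a_1b_2-a_2b_1)^2=k_1^2+k_2^2+k_3^2-2k_1k_2-2k_1k_3-2k_2k_3 .
\]
This is an indefinite Heron formula. I would check that any triple $k$ with nonnegative right-hand side is realized by some triangle, by solving for $a_i,b_i$ directly. The problem is then to maximize $Q(k)=\sum k_i^2-2\sum_{i<j}k_ik_j$ over pairs $(k,d)$ with $|d_i|\le1$ and $|e|\le1$ on the simplex.

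\textbf{Step 3: sign cases and necessary conditions.} If all $k_i$ share a sign, $e$ is concave or convex. The constraints then mimic the definite case and should give a strictly smaller area, which I would verify separately. The main case, up to symmetry and the global sign flip $(k,d)\mapsto(-k,-d)$, is $k_1,k_2\ge0\ge k_3$; this is where $Q$ can be large.

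Here I would impose only necessary conditions at finitely many points:
\begin{itemize}[nosep]
\item the three vertex bounds $|d_i|\le1$;
\item on each edge, $e(t)=(1-t)d_i+td_j+t(1-t)k_{ij}$ is a one-dimensional quadratic; its extremal value, roughly $\tfrac{d_i+d_j}{2}+\tfrac{k}{4}+\tfrac{(d_i-d_j)^2}{4k}$ when interior, must be at most $1$ in absolute value;
\item possibly the interior saddle point of $e$, if it lies in the simplex.
\end{itemize}
These conditions bound each $|k_{ij}|$ in terms of the $d$'s, and maximizing $Q$ under them is a low-dimensional problem. I expect the optimum to have two equal positive edge products, say $k_1=k_2=t$ and $k_3=-s$. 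The deviations should be varying, with values such as $\pm1$ and an intermediate value, rather than the constant $\Delta=\tfrac13$ of Atariah et al. The target value is $Q=4(32\sqrt3/9)^2$.

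\textbf{Step 4: matching construction.} I would then check that the optimizer of the relaxed problem actually satisfies $|e|\le1$ on the whole simplex, so the upper bound is attained. Realizing the optimal $k$ as an explicit triangle gives area $\tfrac{32\sqrt3}{9}$.

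\textbf{Main obstacle.} Step 3 is the hardest part: a global maximization of a nonconvex $Q$ over a nonconvex feasible set. The difficulty is choosing which necessary conditions are active while ensuring no other sign pattern or case does better. I would first fix $d$, note that the feasible set in $k$ is then a box-like region cut out by the edge conditions, and maximize $Q$ over $k$ explicitly. This reduces everything to maximizing over $d\in[-1,1]^3$, where Lagrange conditions together with the symmetry $k_1=k_2$ should pin down the optimum.
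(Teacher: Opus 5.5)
Your architecture is the paper's. You reduce to a single triangle, using tiling for achievability; your covering argument for the lower bound is correct and fits Definition~\ref{def:triangle-density} directly. You then use edge products with your indefinite Heron formula (the paper's $4A^2=(k_1+k_2-k_3)^2-4k_1k_2$), reduce signs to $k_{12},k_{13}\ge 0$, and plan to fix $d$, optimize over $k$, then optimize over $d$. The gap is that Step~3, which is the whole content of the theorem, is only a list of expectations, and the tools you name would not close it.

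The first missing idea is the exact form of the edge constraint. Put $a_i=\sqrt{\varepsilon-d_i}$ and $b_i=\sqrt{\varepsilon+d_i}$. On an edge with $k_{ij}>0$, the condition $e\le\varepsilon$ reads $t(1-t)k_{ij}\le(1-t)a_i^2+ta_j^2$ for all $t\in(0,1)$, which by Cauchy--Schwarz is exactly $k_{ij}\le(a_i+a_j)^2$. Likewise $-k_{ij}\le(b_i+b_j)^2$ when $k_{ij}<0$. This removes your ``when interior'' case distinction and makes the feasible set for fixed $d$ an honest box: $k_{12}\in[0,(a_1+a_2)^2]$, $k_{13}\in[0,(a_1+a_3)^2]$, $k_{23}\in[-(b_2+b_3)^2,(a_2+a_3)^2]$. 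The second is that $4A^2$ has coefficient $+1$ on each $k_{ij}^2$, so it is separately convex and its maximum over the box sits at one of eight vertices. Crude bounds then show that, after maximizing over $d$, the vertex $\bigl((a_1+a_2)^2,(a_1+a_3)^2,-(b_2+b_3)^2\bigr)$ beats the other seven. This also covers your same-sign case, so no separate ``definite-like'' analysis is needed.

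What remains is a nonconvex problem in $(a_1,a_2,a_3)$, and ``Lagrange conditions together with the symmetry $k_{12}=k_{13}$'' is not a proof. The symmetry must be derived, and stationarity cannot certify a global maximum. The paper fixes $a_1$ and $\bar a=(a_2+a_3)/2$ and shows the objective at $h=(a_2-a_3)^2/4>0$ is strictly smaller than at $h=0$. The key inequality is $(b_2+b_3)^2\le 2(b_2^2+b_3^2)=4(\sigma-\bar a^2-h)$, strict for $h>0$, with $\sigma=2\varepsilon$. The reduced objective $16(\sigma-\bar a^2)(\sigma+a_1^2+2a_1\bar a)$ is maximized on the boundary $a_1=\sqrt\sigma$, i.e.\ $d_1=-\varepsilon$, which interior stationarity misses. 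The paper closes this with the certificate
\[
\tfrac{1024\sigma^2}{27}-4A^2=\tfrac{32\sqrt\sigma}{27}(3\bar a-\sqrt\sigma)^2(3\bar a+5\sqrt\sigma)+16(\sigma-\bar a^2)(\sqrt\sigma-a_1)(\sqrt\sigma+a_1+2\bar a),
\]
giving $d=(-\varepsilon,\tfrac{7}{9}\varepsilon,\tfrac{7}{9}\varepsilon)$ and $A=\tfrac{32\sqrt3}{9}\varepsilon$.

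Separately, your claim that $e$ is concave or convex when all $k_{ij}$ share a sign is false. The function $e=\ell-xy$ has Hessian $-\bigl(\begin{smallmatrix}0&1\\1&0\end{smallmatrix}\bigr)$ on every triangle; only its restrictions to edges are concave or convex. That indefiniteness is useful, though: $e$ has no interior extremum, so $\max_T|e|=\max_{\partial T}|e|$. This makes your interior-saddle condition superfluous and Step~4 automatic.
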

\noindent\textit{Proof in Section~\ref{sec:proofs}, page~\pageref{thm:optimal-triangulation-proof}.} The optimal triangle is illustrated in \cref{fig:optimal-triangle}.
\begin{figure}[ht]
\centering
\begin{tikzpicture}[scale=0.9]
    \definecolor{col0}{RGB}{220,60,60}    %
    \definecolor{col1}{RGB}{60,160,60}    %
    
    \coordinate (v1) at (0,0);
    \coordinate (v2) at (3.6,0.97);
    \coordinate (v3) at (0.97,3.6);
    
    \fill[blue!10] (v1) -- (v2) -- (v3) -- cycle;
    
    \draw[thick] (v1) -- (v2) node[midway, below right] {$k_{12} = \tfrac{32\varepsilon}{9}$};
    \draw[thick] (v1) -- (v3) node[midway, above left] {$k_{13} = \tfrac{32\varepsilon}{9}$};
    \draw[thick] (v2) -- (v3) node[midway, right] {$k_{23} = -\tfrac{64\varepsilon}{9}$};
    
    \fill[col0] (v1) circle (3pt) node[below left] {$v_1$: $d_1 = -\varepsilon$};
    \fill[col1] (v2) circle (3pt) node[right] {$v_2$: $d_2 = \tfrac{7\varepsilon}{9}$};
    \fill[col1] (v3) circle (3pt) node[above] {$v_3$: $d_3 = \tfrac{7\varepsilon}{9}$};
    
    \draw[dashed, gray] (0,0) -- (2.3,2.3);
    \node[gray] at (1.6,1) {\tiny $x=y$};
    
    \node at (1.5,1.5) {$A^* = \tfrac{32\sqrt{3}}{9}\varepsilon$};
\end{tikzpicture}
\caption{The optimal triangle for $\varepsilon$-approximation of $F(x,y) = xy$. Each vertex $v_i$ has a \emph{deviation} $d_i := f(v_i) - F(v_i)$, the signed difference between the approximating plane and the surface at that vertex. Each edge carries an \emph{edge product} $k_{ij} := (x_j - x_i)(y_j - y_i)$, which measures the curvature of $F$ along that edge (positive for ``ascending'' edges, negative for the ``descending'' edge). Note the symmetry: $v_2$ and $v_3$ are reflections across the line $x=y$, and $d_2 = d_3$. We call this the ``LHH'' (Low-High-High) deviation pattern: one low deviation ($d_1 = -\varepsilon$) and two equal high deviations ($d_2 = d_3 = 7\varepsilon/9$). These quantities are defined formally in Section~\ref{sec:error-analysis}.}
\label{fig:optimal-triangle}
\end{figure}
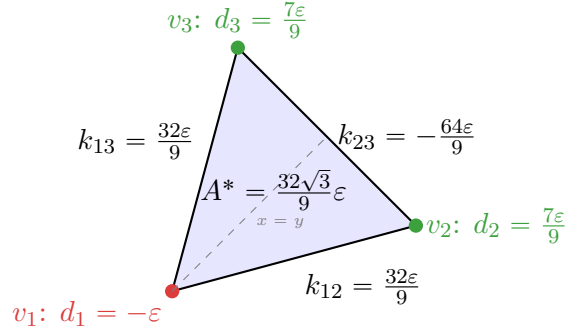

\begin{theorem}[\Pwl underestimations and overestimations]
\label{thm:optimal-triangulation-under}
Under the constraint of underestimation ($f \leq F$) or overestimation ($f \geq F$), the optimal triangle density is $\tfrac{3\sqrt{3}}{16\varepsilon}$. The optimal triangles have area $\tfrac{16\sqrt{3}}{9}\varepsilon$.
\end{theorem}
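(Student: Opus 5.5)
The plan is to reduce the statement directly to \cref{thm:optimal-triangulation} with a constant vertical shift. No new geometry is needed. The key observation is that, on a fixed triangulation $\TT$, a \pwl function $f$ is an overestimation of $F$ with accuracy $\varepsilon$ exactly when
\[
0 \le f - F \le \varepsilon \quad\text{on } \R^2 .
\]
This is equivalent to
\[
-\tfrac{\varepsilon}{2} \le \bigl(f - \tfrac{\varepsilon}{2}\bigr) - F \le \tfrac{\varepsilon}{2}.
\]
Subtracting a constant from $f$ keeps it linear on every triangle of $\TT$, and it keeps continuity if $f$ was continuous. Hence $\TT$ admits an overestimating $\varepsilon$-approximation if and only if $\TT$ admits a general \pwl $\tfrac{\varepsilon}{2}$-approximation; in one direction take $g = f - \tfrac{\varepsilon}{2}$, in the other take $f = g + \tfrac{\varepsilon}{2}$.

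First I will verify this equivalence carefully at the level of the definitions in \cref{sec:error-analysis}. The vertex deviations $d_i$ are all shifted by the same constant $\tfrac{\varepsilon}{2}$. The feasibility constraints of the single-triangle optimization problem then translate accordingly: $d_i\in[0,\varepsilon]$ together with interior deviation in $[0,\varepsilon]$ becomes the symmetric window $[-\tfrac{\varepsilon}{2},\tfrac{\varepsilon}{2}]$. It follows that the set of $\varepsilon$-overestimation triangulations coincides with the set of general $\tfrac{\varepsilon}{2}$-triangulations. In particular the two optimal densities agree, $\delta^{*,\mathrm{over}}_{\varepsilon} = \delta^{*}_{\varepsilon/2}$.

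By \cref{thm:optimal-triangulation}, $\delta^*_{\varepsilon/2} = \tfrac{3\sqrt{3}}{32\cdot(\varepsilon/2)} = \tfrac{3\sqrt{3}}{16\varepsilon}$. The optimal triangle for $\tfrac{\varepsilon}{2}$ has area $\tfrac{32\sqrt{3}}{9}\cdot\tfrac{\varepsilon}{2} = \tfrac{16\sqrt{3}}{9}\varepsilon$. The corresponding optimal overestimator is obtained from the LHH triangle of \cref{fig:optimal-triangle}, scaled to accuracy $\tfrac{\varepsilon}{2}$, with all deviations raised by $\tfrac{\varepsilon}{2}$. This gives deviations $d_1 = 0$ and $d_2=d_3=\tfrac{8\varepsilon}{9}$. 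Attainment of the infimum then transfers as well, via the parallelogram tiling of Lemma~\ref{lem:area_density}.

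For underestimation the same argument works with the shift $f+\tfrac{\varepsilon}{2}$, since $-\varepsilon\le f-F\le 0$ is equivalent to $\lvert (f+\tfrac{\varepsilon}{2})-F\rvert\le\tfrac{\varepsilon}{2}$. Alternatively one can use the symmetry $(x,y)\mapsto(-x,y)$, which maps $xy$ to $-xy$, preserves areas and densities, and exchanges over- and underestimation. The only step needing genuine care is the first: confirming that the formal definitions of $\varepsilon$-triangulation and triangle density (Definitions~\ref{def:triangle-density} and~\ref{def:optimal-triangle-density}) depend only on $\TT$ and on the existence of a suitable $f$, so that the shift-equivalence holds globally rather than triangle by triangle. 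I expect this to be routine, since $\TT$ itself is untouched by the shift.
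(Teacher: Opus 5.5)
Your proposal is correct. It rests on the same mechanism the paper uses: adding a constant to the approximant changes no geometry, so only the width $\sigma = E_{\max}-E_{\min}$ of the error window matters. You apply that mechanism at a different level, though.

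The paper argues triangle by triangle. Each $T_i$ of a one-sided $\varepsilon$-approximation is treated as a feasible instance of the single-triangle model with $E_{\min}=-\varepsilon$, $E_{\max}=0$. Corollary~\ref{cor:optimal} is then invoked with $\sigma=\varepsilon$; its validity for a non-centered window comes from the per-triangle shift in Lemma~\ref{lem:symmetric-bounds}. Finally the area-counting argument from the proof of Theorem~\ref{thm:optimal-triangulation} is repeated.

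You instead shift the whole piecewise linear function by $\mp\varepsilon/2$. This gives an exact bijection, on the unchanged triangulation $\TT$, between one-sided $\varepsilon$-approximations and general $\varepsilon/2$-approximations, and you then use Theorem~\ref{thm:optimal-triangulation} as a black box.

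Your route is shorter and does not revisit the single-triangle bound or the counting argument. For that reason your paragraph translating the single-triangle constraints is not actually needed. Since the shift preserves continuity and leaves $\TT$ untouched, the same argument also gives Theorem~\ref{thm:optimal-triangulation-cont-under} directly from Theorem~\ref{thm:optimal-triangulation-cont}: the density is $\sqrt{3}/(8\cdot\varepsilon/2)=\sqrt{3}/(4\varepsilon)$, and for overestimation the deviation is $\varepsilon/6+\varepsilon/2=2\varepsilon/3$. The paper's route, in exchange, makes explicit the local fact that every individual triangle of a one-sided $\varepsilon$-approximation has area at most $\tfrac{16\sqrt{3}}{9}\varepsilon$.

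One small caveat concerns your reflection alternative for underestimation. It tacitly assumes the density in Definition~\ref{def:triangle-density} is invariant under $(x,y)\mapsto(-x,y)$, and that depends on where $Q_\ell$ is placed. The direct shift $f+\varepsilon/2$ you give first avoids this issue.
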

\noindent\textit{Proof in Section~\ref{sec:proofs}, page~\pageref{thm:optimal-triangulation-under-proof}.}

\begin{theorem}[Continuous \pwl approximations -- parallelogram tilings]
\label{thm:optimal-triangulation-cont}
Among parallelogram tilings, the optimal triangle density for continuous \pwl $\varepsilon$-approximations of $F(x,y) = xy$ is $\tfrac{\sqrt{3}}{8\varepsilon}$. The optimal triangles have area $\tfrac{8\sqrt{3}}{3}\varepsilon$ and constant deviation $\Delta = \tfrac{\varepsilon}{3}$ at all vertices. We conjecture this is optimal among all continuous triangulations (\cref{conj:continuous-optimality}).
\end{theorem}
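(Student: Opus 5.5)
The plan is to reduce the parallelogram-tiling problem with continuity to a single triangle carrying a \emph{constant} deviation, and then to solve the resulting finite-dimensional area-maximization problem.

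\textbf{Setting up the constraint system.} Let $T$ have vertices $v_1,v_2,v_3$ and consider the parallelogram tiling by translates of $T$ and its point reflection $T'$. The vertex set is the lattice $L=v_1+\mathbb{Z}(v_2-v_1)+\mathbb{Z}(v_3-v_1)$. Continuity means there is a single deviation function $d\colon L\to\R$; every triangle incident to a lattice point uses the same value there.

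By Lemma~\ref{lem:invariance_trans}, the error of the linear function on a translate $T+t$ depends only on the edge products $k_{ij}$ and the three vertex deviations. By Lemma~\ref{lem:invariance_reflection}, the same holds for $T'$, with the same edge products and the roles of the vertices reflected. Hence each copy of $T$ imposes the condition $(d(p),d(p+a),d(p+b))\in D_T$, where $a=v_2-v_1$ and $b=v_3-v_1$. Similarly each copy of $T'$ imposes $(d(p),d(p-a),d(p-b))\in D_T$. Here
\[
D_T=\{(d_1,d_2,d_3): \text{the linear function with these deviations is an }\varepsilon\text{-approximation on }T\}.
\]

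\textbf{Averaging to a constant deviation.} The key observation is that $D_T$ is convex and closed. Indeed, $f$ depends affinely on $(d_1,d_2,d_3)$, and $\|f-F\|_{\infty,T}\le\varepsilon$ is a convex condition in $f$.

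Average the feasible triples over all copies of $T$ whose base point $p$ lies in a large box $\{p=v_1+ia+jb: |i|,|j|\le N\}$. Each lattice point in the interior occurs exactly once in each vertex role. The averaged triple is therefore $(m_N,m_N,m_N)+O(1/N)$, where $m_N$ is the mean of $d$ over the box. Each $m_N$ is bounded in absolute value by $\varepsilon$, since a deviation at a vertex is itself a pointwise error. Passing to a convergent subsequence and using closedness of $D_T$ gives a constant $\Delta$ with $(\Delta,\Delta,\Delta)\in D_T$. Conversely, a constant deviation is trivially consistent with continuity on the whole tiling. So among parallelogram tilings the continuous problem is exactly: maximize $A(T)$ subject to $(\Delta,\Delta,\Delta)\in D_T$ for some $\Delta$. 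By Lemma~\ref{lem:area_density} the density is then $1/A(T)$.

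\textbf{Solving the constant-deviation problem.} With constant deviation, $f=I_T+\Delta$, where $I_T$ is the linear interpolant. Write $e^+(T)=\max_T(I_T-F)\ge 0$ and $e^-(T)=\max_T(F-I_T)\ge0$. Then the constraint reads $e^++\Delta\le\varepsilon$ and $e^--\Delta\le\varepsilon$.

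For $F=xy$ these quantities are expressible through the edge products $k_{12},k_{13},k_{23}$ via the error analysis of Section~\ref{sec:error-analysis}. The maximum of $\pm(I_T-F)$ is attained either at an edge midpoint, with value $|k_{ij}|/4$ and sign given by the sign of $k_{ij}$, or at an interior critical point. The area is linked to the edge products through $A^2=$ a quadratic expression in the $k_{ij}$; for instance, with $a=(a_1,a_2)$ and $b=(b_1,b_2)$ one has $(2A)^2=(a_1b_2-a_2b_1)^2$.

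The steps are then as follows.
\begin{itemize}[nosep]
\item Show that at the optimum exactly one edge product has sign opposite to the other two. This is the indefinite situation: if all three had the same sign, $e^-$ would vanish and the triangle would be small.
\item By symmetry, take $k_{12}=k_{13}=k$ and $k_{23}=-k'$.
\item Balance the two constraints by choosing $\Delta$.
\item Maximize $A$ by a one-parameter Lagrange computation.
\end{itemize}
This should yield $\Delta=\varepsilon/3$ and $A=\tfrac{8\sqrt3}{3}\varepsilon$, recovering the construction of Atariah et al.

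\textbf{Main obstacle.} The hardest step is the last one: obtaining a closed form for $e^\pm(T)$ in the indefinite case, and proving the global maximum over all triangle shapes rather than only over the symmetric family. I would first try to reuse the solution machinery of Section~\ref{sec:solution} with the added equality constraint $d_1=d_2=d_3$. I would then verify that the case split on sign patterns of the $k_{ij}$, together with the case of an interior versus a midpoint maximizer, leaves only the symmetric configuration as the candidate.
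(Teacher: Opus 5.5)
Your proposal is correct, and its first half takes a genuinely different and stronger route than the paper.

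\textbf{First half: reduction to a constant deviation.} The paper's Step~1 identifies deviations at $v_2$ inside one parallelogram and across the neighbouring one. That argument presupposes that every copy of $T$ carries the same triple $(d_1,d_2,d_3)$, and every copy of $T'$ the reflected triple. In other words, it only treats the periodic function produced by Lemma~\ref{lem:area_density}.

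Your averaging argument runs as follows: $D_T$ is closed and convex, $|d|\le\varepsilon$ on the lattice, the box average of the triples $(d(p),d(p+a),d(p+b))$ is $(m_N,m_N,m_N)+O(1/N)$, and a subsequential limit gives $(\Delta,\Delta,\Delta)\in D_T$. This applies to an \emph{arbitrary} continuous piecewise linear $\varepsilon$-approximation on the parallelogram triangulation. That is what optimality ``among parallelogram tilings'' should mean, since the density $1/A(T)$ depends only on $T$. It even uses only the copies of $T$. The price is a limiting argument instead of a two-line identification; the gain is that non-periodic deviation assignments are ruled out as well.

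\textbf{Second half: the constant-deviation optimization.} Here your route coincides with the paper's: constant $\Delta$, balance the two constraints, then a one-parameter maximization. However, the obstacle you name is not really one. Also, the ``by symmetry'' step should be proved rather than assumed; the paper's proof likewise just posits $(x_2,y_2)=(y_3,x_3)$.
\begin{itemize}
\item \emph{Closed form for $e^\pm$.} There is no interior critical point: by Lemma~\ref{lem:max_error}\ref{lem:max_error:boundary} the extrema lie on edges. Since $I_T-F$ vanishes at the vertices, $e^+=\tfrac14\max\{0,k_{12},k_{13},k_{23}\}$ and $e^-=\tfrac14\max\{0,-k_{12},-k_{13},-k_{23}\}$.
\item \emph{Reduction to a box.} Normalize $k_{12},k_{13}\ge 0$ by Lemma~\ref{lem:sign-reduction}; for constant deviations this only replaces $\Delta$ by $-\Delta$. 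Fix $\Delta$ and put $P=4(\varepsilon-\Delta)$, $Q=4(\varepsilon+\Delta)$. The feasible edge products form the box $[0,P]^2\times[-Q,P]$. Since $4A^2=(k_{12}+k_{13}-k_{23})^2-4k_{12}k_{13}$ is separately convex, only the eight box vertices matter.
\item \emph{Vertex comparison.} The vertex $(P,P,-Q)$ gives $Q(4P+Q)=4(\varepsilon+\Delta)(20\varepsilon-12\Delta)\le\tfrac{256}{3}\varepsilon^2$, with equality iff $\Delta=\varepsilon/3$. Every other vertex gives at most $(P+Q)^2=64\varepsilon^2$.
\end{itemize}
This settles the sign-pattern case split and the symmetry in one stroke, and yields $A^*=\tfrac{8\sqrt3}{3}\varepsilon$.

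One caution if you instead reuse Section~\ref{sec:solution} with $d_1=d_2=d_3$. The Type~C bound $A^2\le\tfrac{17+12\sqrt2}{4}\sigma^2\approx 8.49\sigma^2$ from Lemma~\ref{lem:tight} exceeds the constrained optimum $A^2=\tfrac{16}{3}\sigma^2$. So that comparison must be redone under the constraint, where it gives $A^2\le 4\sigma^2$.
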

\noindent\textit{Proof in Section~\ref{sec:proofs}, page~\pageref{thm:optimal-triangulation-cont-proof}.}

\begin{theorem}[Continuous \pwl under/overestimations -- parallelogram tilings]
\label{thm:optimal-triangulation-cont-under}
Among parallelogram tilings, the optimal triangle density for continuous \pwl $\varepsilon$-overestimations (resp.\ $\varepsilon$-underestimations) of $F(x,y)=xy$ is $\tfrac{\sqrt{3}}{4\varepsilon}$. The optimal triangles have area $\tfrac{4\sqrt{3}}{3}\varepsilon$ and constant deviation $\Delta = \tfrac{2\varepsilon}{3}$ (for overestimation) or $\Delta = -\tfrac{2\varepsilon}{3}$ (for underestimation).
\end{theorem}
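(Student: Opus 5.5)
The plan is to reduce this statement directly to \cref{thm:optimal-triangulation-cont} by a shift-and-rescale argument, so that no new triangle optimization is needed. The key observation is an equivalence between two bounds. A \pwl function $f$ on a triangulation $\TT$ satisfies $0 \le f - F \le \varepsilon$ everywhere if and only if the shifted function $g \define f - \tfrac{\varepsilon}{2}$ satisfies $\|g - F\|_\infty \le \tfrac{\varepsilon}{2}$. The map $f \mapsto g$ has three properties we need:
\begin{itemize}
\item it keeps the underlying triangulation $\TT$ unchanged;
\item it keeps $g$ piecewise linear on the same triangles;
\item it preserves continuity, since it only adds a global constant.
\end{itemize}
Hence continuous \pwl $\varepsilon$-overestimations on $\TT$ correspond bijectively to continuous \pwl $\tfrac{\varepsilon}{2}$-approximations on $\TT$. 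This holds in particular when $\TT$ ranges over parallelogram tilings. Vertex deviations transform as $d_i \mapsto d_i - \tfrac{\varepsilon}{2}$.

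Next I apply \cref{thm:optimal-triangulation-cont} with accuracy $\tfrac{\varepsilon}{2}$. Among parallelogram tilings, the optimal density for continuous $\tfrac{\varepsilon}{2}$-approximations is $\tfrac{\sqrt{3}}{8(\varepsilon/2)} = \tfrac{\sqrt{3}}{4\varepsilon}$. The optimal area is $\tfrac{8\sqrt{3}}{3}\cdot\tfrac{\varepsilon}{2} = \tfrac{4\sqrt{3}}{3}\varepsilon$, attained with constant deviation $\tfrac{\varepsilon/2}{3} = \tfrac{\varepsilon}{6}$. Shifting back gives deviation $\Delta = \tfrac{\varepsilon}{6} + \tfrac{\varepsilon}{2} = \tfrac{2\varepsilon}{3}$ for the overestimation. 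Because the correspondence is a bijection on the same triangulations, both the upper bound (construction) and the lower bound (optimality) transfer.

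For underestimation, $-\varepsilon \le f - F \le 0$ is handled the same way with $g = f + \tfrac{\varepsilon}{2}$, giving $\Delta = \tfrac{\varepsilon}{6} - \tfrac{\varepsilon}{2} = -\tfrac{\varepsilon}{3}$. This does not match the claimed $-\tfrac{2\varepsilon}{3}$, so the symmetry must be used instead. The linear map $(x,y)\mapsto(x,-y)$ sends $F = xy$ to $-F$ and maps parallelogram tilings to parallelogram tilings of the same density. Under it, an underestimation $f \le F$ becomes $-f \ge -F\circ\phi$, an overestimation of $xy$ with deviations negated. This yields $\Delta = -\tfrac{2\varepsilon}{3}$ and the same density. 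The discrepancy with the naive shift signals that the constant-deviation optimum of \cref{thm:optimal-triangulation-cont} need not be unique or sign-symmetric. I would therefore check which orientation the optimal triangle has: one positive and two negative edge products, or the reverse. The deviation sign in the statement is then read off after the reflection.

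The main obstacle is confirming that the class \emph{parallelogram tilings with continuous approximation} is exactly the class handled in \cref{thm:optimal-triangulation-cont}. In particular, continuity there should force a constant deviation via the lattice structure: every tiling vertex plays the roles of $v_1,v_2,v_3$ in different translated copies, and reflection invariance gives $d_4=d_1$. It must also be confirmed that the density equals $1/A(T)$ by Lemma~\ref{lem:area_density}. Since the shift and the reflection both preserve these structural properties, I expect the reduction to go through once that bookkeeping is checked.
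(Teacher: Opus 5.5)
Your proof is correct, and it takes a slightly different route from the paper. The paper re-runs the constant-deviation optimization of \cref{thm:optimal-triangulation-cont} with the deviation interval restricted to $[0,\varepsilon]$ or $[-\varepsilon,0]$. You instead add the global constant $\mp\tfrac{\varepsilon}{2}$. This leaves the triangulation, piecewise linearity and continuity unchanged, so it gives a bijection between continuous $\varepsilon$-over/underestimations and continuous $\tfrac{\varepsilon}{2}$-approximations on the same parallelogram tilings. The density $\tfrac{\sqrt{3}}{4\varepsilon}$, the area $\tfrac{4\sqrt{3}}{3}\varepsilon$, the construction and the lower bound therefore all transfer at once, with no new optimization. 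This is the idea of Lemma~\ref{lem:symmetric-bounds}, used with \cref{thm:optimal-triangulation-cont} as a black box. Your ``main obstacle'' is not really one: that theorem is stated for exactly this class of tilings. The shift and the map $(x,y)\mapsto(x,-y)$ both send the class to itself, since $T$ and $-T$ go to $\phi(T)$ and $-\phi(T)$.

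On the deviation, your shift computation is not ``naive'' and needs no further orientation check. Take a triangle with two ascending edges at $v_1$, which is the normalization $k_1,k_2\ge 0$ used throughout the paper. Its optimal constant deviation is $E_{\max}-\sigma/3$: $\tfrac{2\varepsilon}{3}$ for overestimation and $-\tfrac{\varepsilon}{3}$ for underestimation. The value $-\tfrac{\varepsilon}{3}$ is exactly what Table~\ref{tab:optimal-solutions} lists for continuous underestimation. It is also what the paper's ``same optimization'' actually returns: $\Delta+\tfrac{k}{4}\le 0$ and $\Delta-\tfrac{w}{4}\ge-\varepsilon$ lead to maximizing $(\varepsilon+\Delta)(4\varepsilon-12\Delta)$. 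Your reflection maps these triangles to equally optimal ones with two descending edges, whose optimal deviation is $E_{\min}+\sigma/3$: $\tfrac{\varepsilon}{3}$ and $-\tfrac{2\varepsilon}{3}$ respectively. So each case has two optimal constant deviations, and the statement's pair $\pm\tfrac{2\varepsilon}{3}$ uses opposite orientations for over- and underestimation. The statement is true as an existence claim, which your reflection step proves, but false as a uniqueness claim. Say this plainly instead of leaving it as a hedge. With that, your argument is more precise than the paper's, whose $-\tfrac{2\varepsilon}{3}$ is justified only by the reflection you supply.
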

\noindent\textit{Proof.} This follows from the continuous case analysis (Theorem~\ref{thm:optimal-triangulation-cont}) applied to restricted deviation ranges. For overestimation ($f \geq F$), the vertex deviations are constrained to $[0, \varepsilon]$; for underestimation ($f \leq F$), to $[-\varepsilon, 0]$. In both cases, the feasible error interval has width $\varepsilon$ instead of $2\varepsilon$. Continuity forces a constant deviation $\Delta$ at all vertices, and the same optimization yields $\Delta^* = 2\varepsilon/3$ (overestimation) or $\Delta^* = -2\varepsilon/3$ (underestimation), with optimal area $A^* = \tfrac{4\sqrt{3}}{3}\varepsilon$ and density $\tfrac{\sqrt{3}}{4\varepsilon}$. \qed

The key insight enabling these results is a reformulation via slack variables and edge-product bounds that reduces the nonconvex optimization to a structured problem over a box, which is then solved through separate convexity, symmetry arguments, and an algebraic optimality certificate.

\paragraph{Outline}
The paper is organized as follows. Section~\ref{sec:opt} introduces the formal definitions of triangulations, \pwl approximations, and triangle density. Section~\ref{sec:reduction} establishes that finding optimal triangulations of the plane reduces to finding a single optimal triangle, using invariance properties of quadratic functions and the fact that a triangle and its point-reflection tile the plane. Section~\ref{sec:error-analysis} analyzes the approximation error over a single triangle and introduces pseudo-Euclidean motion for geometric interpretation. Section~\ref{sec:solution} formulates and solves the optimization problem for finding an optimal triangle: the model is first formulated and then reformulated via slack variables and edge-product bounds (Section~\ref{sec:slack-reformulation}), yielding a structured optimization over box-constrained edge products that is solved through separate convexity, symmetry arguments, and algebraic optimality certificates (Section~\ref{sec:elem-proof}). Section~\ref{sec:proofs} presents the proofs of our main theorems and discusses special cases.

\section{Definitions}
\label{sec:opt}

This section establishes the formal framework for our analysis. We define triangulations, \pwl approximations, approximation error, and triangle density---the key metric for comparing triangulations of unbounded domains.
\begin{definition}[Triangle]
	\label{def:simplex}
	A triangle $T$ is the convex hull
	of three affinely independent points in $ \R ^d $ with $d\in \N$.
\end{definition}
A triangulation is a partition consisting of triangles.

\begin{definition}[Triangulation]
	\label{def:triangulation_fully}
	A collection of triangles $ \TT = \{T_i\}_{i\in I} $
	is called a \emph{triangulation} of a set $ X \subseteq \R^2 $
	if $ X = \bigcup_{i \in I} T_i $
	and for any distinct $i, j \in I$, the intersection $ T_i \cap T_j $
	is either empty or a common proper face of both $T_i$ and $T_j$. 
	If $X = \R^2$, we additionally assume $\TT$ is locally finite (see Definition~\ref{def:pwl-functions}).
\end{definition}
Furthermore, we introduce notation for the minimal index triangle in a triangulation, which will be used later in the context of non-continuous approximations. Let $\TT = \{T_i\}_{i\in I}$ be a triangulation of a set $X \subseteq \R^2$. For each $x \in X$, we define the minimum index as $i_x := \min\{i \in \mathcal{I} : x \in T_i\}$.
We use triangulations to define \pwl functions.
Let $B_\infty(r) = \{x \in \R^2 : \|x\|_\infty \leq r\}$ be a ball centered at the origin of $\R^2$.

\begin{definition}[Locally Finite Triangulation and Piecewise Linear Functions]
	\label{def:pwl-functions}
	Given $ X \subseteq \R^2 $,
	let $ \TT \define \{T_i\}_{i \in \mathcal I}$
	be a collection of triangles with countable index set $\mathcal I$.
	We call $\mathcal T$ a \emph{locally finite triangulation} of~$X$ if 
	$ X = \bigcup_{i \in \mathcal I} T_i $,
	the intersection $ T_i \cap T_j $
	of any two triangles $ T_i, T_j \in \triang $
	is a proper face of $T_i$ and $T_j$, and
	for all $R > 0$, the ball $B_\infty(R)$ intersects only a finite number of triangles $T_i$.
	
	A function~$ g\colon X \rightarrow \R $
	is called \emph{piecewise linear} with respect to $\TT$
	if there exist vectors $ \mathbf{m}_i \in \R^2 $
	and constants $ c_i \in \R $ for $ i \in \mathcal I$
	such that
	\begin{equation}
		\label{eq:pwl-function}
		g (\mathbf{x}) = \mathbf{m}_{i_{\mathbf{x}}}^\top \mathbf{x} + c_{i_\mathbf{x}}
		\quad \text{if } \mathbf{x} \in T_{i_{\mathbf{x}}},
	\end{equation}
	where $i_\mathbf{x} = \min\{i \in \mathcal{I} : \mathbf{x} \in T_i\}$ is the minimum index among all triangles containing $\mathbf{x}$.
We denote the set of \pwl functions from $\R^2$ to $\R$ as $\mathcal{PWL}$.
\end{definition}

\begin{definition}[Approximation Error]
	\label{def:linearization-error}
	Consider a triangulation $ \TT $ of $ X \subseteq \R^2 $
	and let $ g \colon X \to \R $ be a \pwl approximation
	of a function $ G\colon X \to \R $  \wrt $ \TT $.
	We call
	\begin{equation}
		E_{g, G}\colon X \to \R, \quad E_{g, G}(\mathbf{x}) \define g(\mathbf{x}) - G(\mathbf{x})
	\end{equation}
	the \emph{error function} \wrt $g$ and $G$ and
	\begin{equation}
		\varepsilon_{g, G}(T) \define \max_{\mathbf{x} \in T} \abs{E_{g, G}(\mathbf{x})}
	\end{equation}
	the \emph{approximation error} on a triangle~$ T \in \triang $.
	Consequently, we define the \emph{ approximation error} of $g$
	(or, equivalently, of $ \triang $) \wrt $G$ over the domain~$X$ as
	\begin{equation}
		\varepsilon_{g, G}(\triang)
		\define \max_{T \in \triang} \varepsilon_{g, G}(T).
	\end{equation}
	Given some $ \varepsilon > 0 $, we call $g$ an $ \varepsilon $-approximation
	and $ \TT $
	an $ \varepsilon $-triangulation
	if the approximation error is smaller than or equal to~$ \varepsilon $.
\end{definition}

For bounded domains, comparing triangulations is straightforward: we simply count the number of triangles. For unbounded domains like $\R^2$, this approach fails since the number is infinite. Instead, we use the concept of \emph{triangle density} from \cite{Atariah:2018}, which measures the asymptotic number of triangles per unit area.

\begin{definition}[Triangle Density \cite{Atariah:2018}]
\label{def:triangle-density}

Let $\TT$ be a triangulation of $\R^2$. 
The \emph{triangle density}  counts the
number of triangles of a triangulation that intersect the square $Q_\ell$ for larger
and larger side length $\ell$, in comparison to the area of $Q_\ell$,
$$ \delta(\mathcal T) := \limsup_{\ell \to \infty} \frac{\abs{\{T \in \TT: T \cap Q_\ell \neq \emptyset \}}}{\ell^2}.$$
For simplicity, we say a \pwl approximation has a \emph{density} of $\delta$ if the underlying triangulation $\TT$ has a triangle density of $\delta$.
\end{definition}

\begin{remark}
The $\limsup$ in Definition~\ref{def:triangle-density} always exists (possibly $+\infty$). For the periodic tilings constructed in this paper---parallelogram lattices generated by a triangle $T$ and its reflection $T'$---the limit exists and equals exactly $1/A(T)$, where $A(T)$ is the area of a single triangle.
\end{remark}

With this metric, we can define optimality for triangulations of the plane.

\begin{definition}[Optimal Triangle Density]
\label{def:optimal-triangle-density}
Given a function $G:\R^2\to \R$ and an approximation error bound $\varepsilon>0$, we define the $\varepsilon$-optimal triangle density of $G$ as 
    \begin{equation}
\delta^*_{G,\varepsilon} = \inf_{g \in \mathcal{PWL}}\{\delta(\mathcal T_g) :  \|g - G\|_\infty \leq \varepsilon\}.
\end{equation}
If there exists a triangulation $\TT_g$ allowing for a \pwl function $g$ such that $\delta(\TT_g)=\delta^*_{G,\varepsilon}$, we call $g$ an $\varepsilon$-optimal approximation of $G$ and $\TT_g$ an $\varepsilon$-optimal triangulation with respect to $G$. 
\end{definition}

\section{Reduction to standard indefinite form and a single triangle}
\label{sec:reduction}
The key insight enabling our results is that optimizing over all triangulations of the plane reduces to optimizing a single triangle. This reduction relies on two invariance properties of quadratic functions---translation and reflection---which we establish first for general quadratic forms in $\R^n$. We then specialize to $\R^2$, reduce to the indefinite standard form $F(x,y) = xy$, and show how a single optimal triangle generates an optimal triangulation of the entire plane.

\paragraph{Notation} The invariance lemmas (Lemmas~\ref{lem:invariance_trans} and~\ref{lem:invariance_reflection}) are stated for general $\R^n$ using boldface for vectors: $\mathbf{x} \in \R^n$, $\mathbf{c} \in \R^n$, etc. From Section~\ref{sec:standard-form} onward, we specialize to $\R^2$ and write points as $(x,y)$ without boldface, which is the convention for the remainder of the paper.
\subsection{Invariance under translation}
Quadratic surfaces ``look the same'' at every point: translating a simplex preserves the quality of any linear approximation.

\begin{lemma}[Invariance under translation]
\label{lem:invariance_trans}
Let $f(\mathbf{x}) = \mathbf{x}^\top Q \mathbf{x} + \mathbf{c}^\top \mathbf{x} + \alpha$ be a quadratic form with $Q \in \mathbb{R}^{n \times n}$ symmetric. For any simplex $S \subset \mathbb{R}^n$ and translation vector $\mathbf{t} \in \mathbb{R}^n$: if $\ell$ is a linear $\varepsilon$-approximation of $f$ over $S$, then there exists a linear $\varepsilon$-approximation $\ell'$ of $f$ over the translated simplex $S + \mathbf{t}$.
\end{lemma}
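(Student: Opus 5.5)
The plan is to write $f(\mathbf{x}+\mathbf{t})$ as $f(\mathbf{x})$ plus an affine function of $\mathbf{x}$, and to absorb that affine part into the approximant. The key identity is
\begin{equation*}
f(\mathbf{x}+\mathbf{t}) = \mathbf{x}^\top Q\mathbf{x} + 2\mathbf{t}^\top Q\mathbf{x} + \mathbf{t}^\top Q \mathbf{t} + \mathbf{c}^\top\mathbf{x} + \mathbf{c}^\top\mathbf{t} + \alpha = f(\mathbf{x}) + h_{\mathbf{t}}(\mathbf{x}),
\end{equation*}
where $h_{\mathbf{t}}(\mathbf{x}) \define 2\mathbf{t}^\top Q\mathbf{x} + \mathbf{t}^\top Q\mathbf{t} + \mathbf{c}^\top \mathbf{t}$ is affine in $\mathbf{x}$. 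The symmetry of $Q$ is what lets the cross terms $\mathbf{x}^\top Q\mathbf{t} + \mathbf{t}^\top Q\mathbf{x}$ combine into $2\mathbf{t}^\top Q\mathbf{x}$.

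With this identity, define the candidate approximation on $S+\mathbf{t}$ by
\begin{equation*}
\ell'(\mathbf{y}) \define \ell(\mathbf{y}-\mathbf{t}) + h_{\mathbf{t}}(\mathbf{y}-\mathbf{t}).
\end{equation*}
Here ``linear'' is understood as affine, in line with the \pwl definition in Definition~\ref{def:pwl-functions}. Since $\ell'$ is a composition and sum of affine maps, it is affine.

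For any $\mathbf{y}\in S+\mathbf{t}$, write $\mathbf{y}=\mathbf{x}+\mathbf{t}$ with $\mathbf{x}\in S$. Then
\begin{equation*}
\ell'(\mathbf{y}) - f(\mathbf{y}) = \ell(\mathbf{x}) + h_{\mathbf{t}}(\mathbf{x}) - f(\mathbf{x}) - h_{\mathbf{t}}(\mathbf{x}) = \ell(\mathbf{x}) - f(\mathbf{x}).
\end{equation*}
So the error function on $S+\mathbf{t}$ is exactly the translate of the error function on $S$. In particular, the maximum absolute error is unchanged, so $\ell'$ is an $\varepsilon$-approximation whenever $\ell$ is.

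There is no real obstacle here; the only care needed is the expansion of the quadratic term. A useful remark for later is that the error function is preserved pointwise, not merely its supremum. This means the vertex deviations $d_i$ are carried over unchanged, and any sign constraints (over- or underestimation) are preserved too. The later reduction to a single triangle (Lemma~\ref{lem:area_density}) will rely on this.
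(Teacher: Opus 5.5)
Your proof is correct and follows essentially the same route as the paper: both make the error on $S+\mathbf{t}$ the pointwise translate of the error on $S$. The paper matches coefficients to get $\mathbf{d}' = \mathbf{d} + 2Q\mathbf{t}$ and $\beta' = \beta - \mathbf{t}^\top Q\mathbf{t} + \mathbf{c}^\top\mathbf{t} - \mathbf{d}^\top\mathbf{t}$, while your coordinate-free definition $\ell'(\mathbf{y}) = \ell(\mathbf{y}-\mathbf{t}) + h_{\mathbf{t}}(\mathbf{y}-\mathbf{t})$ expands to exactly the same affine function.
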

\begin{proof}
Let $\ell(\mathbf{x}) = \mathbf{d}^\top \mathbf{x} + \beta$. We seek $\ell'(\mathbf{x}) = {\mathbf{d}'}^\top \mathbf{x} + \beta'$ such that
\[
f(\mathbf{x} + \mathbf{t}) - \ell'(\mathbf{x} + \mathbf{t}) = f(\mathbf{x}) - \ell(\mathbf{x}) \quad \text{for all } \mathbf{x}.
\]
Expanding $f(\mathbf{x} + \mathbf{t}) = f(\mathbf{x}) + 2\mathbf{x}^\top Q \mathbf{t} + \mathbf{t}^\top Q \mathbf{t} + \mathbf{c}^\top \mathbf{t}$ and $\ell'(\mathbf{x} + \mathbf{t}) = {\mathbf{d}'}^\top \mathbf{x} + {\mathbf{d}'}^\top \mathbf{t} + \beta'$, the condition becomes:
\[
\ell'(\mathbf{x} + \mathbf{t}) - \ell(\mathbf{x}) = 2\mathbf{x}^\top Q \mathbf{t} + \mathbf{t}^\top Q \mathbf{t} + \mathbf{c}^\top \mathbf{t}.
\]
Matching coefficients of $\mathbf{x}$ gives $\mathbf{d}' = \mathbf{d} + 2Q\mathbf{t}$. Matching constants gives $\beta' = \beta - \mathbf{t}^\top Q \mathbf{t} + \mathbf{c}^\top \mathbf{t} - \mathbf{d}^\top \mathbf{t}$.

With these choices, $|f(\mathbf{y}) - \ell'(\mathbf{y})| = |f(\mathbf{y} - \mathbf{t}) - \ell(\mathbf{y} - \mathbf{t})| \leq \varepsilon$ for all $\mathbf{y} \in S + \mathbf{t}$.
\end{proof}

\subsection{Invariance under reflection}
Similarly, reflecting a simplex through the origin preserves the approximation quality.

\begin{lemma}[Invariance under reflection]
\label{lem:invariance_reflection}
Let $f(\mathbf{x}) = \mathbf{x}^\top Q \mathbf{x} + \mathbf{c}^\top \mathbf{x} + \alpha$ be a quadratic form with $Q \in \mathbb{R}^{n \times n}$ symmetric. For any simplex $S \subset \mathbb{R}^n$: if $\ell$ is a linear $\varepsilon$-approximation of $f$ over $S$, then there exists a linear $\varepsilon$-approximation $\ell'$ of $f$ over the reflected simplex $-S := \{-\mathbf{x} : \mathbf{x} \in S\}$.
\end{lemma}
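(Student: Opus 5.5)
The plan mirrors the proof of Lemma~\ref{lem:invariance_trans}: construct an explicit affine function $\ell'$ whose error on $-S$ reproduces, point by point, the error of $\ell$ on $S$. The relation used is
\[
f(-\mathbf{x}) - \ell'(-\mathbf{x}) = f(\mathbf{x}) - \ell(\mathbf{x}) \quad \text{for all } \mathbf{x} \in \R^n.
\]
Once this identity holds, every $\mathbf{y} \in -S$ satisfies $\mathbf{y} = -\mathbf{x}$ for some $\mathbf{x} \in S$. Then $|f(\mathbf{y}) - \ell'(\mathbf{y})| = |f(\mathbf{x}) - \ell(\mathbf{x})| \le \varepsilon$, and the lemma follows.

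Write $\ell(\mathbf{x}) = \mathbf{d}^\top \mathbf{x} + \beta$ and make the ansatz $\ell'(\mathbf{x}) = {\mathbf{d}'}^\top \mathbf{x} + \beta'$. Since the quadratic part is even, $(-\mathbf{x})^\top Q(-\mathbf{x}) = \mathbf{x}^\top Q \mathbf{x}$. Hence $f(-\mathbf{x}) = f(\mathbf{x}) - 2\mathbf{c}^\top \mathbf{x}$. The required identity therefore reduces to the affine equation
\[
\ell'(-\mathbf{x}) = \ell(\mathbf{x}) - 2\mathbf{c}^\top \mathbf{x},
\]
which reads $-{\mathbf{d}'}^\top \mathbf{x} + \beta' = (\mathbf{d} - 2\mathbf{c})^\top \mathbf{x} + \beta$. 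Matching coefficients gives $\mathbf{d}' = 2\mathbf{c} - \mathbf{d}$ and $\beta' = \beta$. Both sides are affine in $\mathbf{x}$, so coefficient matching is exactly equivalent to the identity, and it can always be satisfied.

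No step here is a real obstacle. The only point needing care is that the linear term $\mathbf{c}^\top\mathbf{x}$ is odd rather than even. It must be absorbed into $\ell'$, which is possible because it is itself affine. The point reflection is through the origin, but combining the result with Lemma~\ref{lem:invariance_trans} covers reflection through any center. This is what Lemma~\ref{lem:area_density} uses when a triangle and its point-reflection tile a parallelogram.
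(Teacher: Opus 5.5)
Your proposal is correct and follows the paper's proof essentially line for line. It uses the same pointwise identity $f(-\mathbf{x})-\ell'(-\mathbf{x}) = f(\mathbf{x})-\ell(\mathbf{x})$, the same coefficient matching, and arrives at the same choice $\mathbf{d}' = 2\mathbf{c}-\mathbf{d}$, $\beta'=\beta$.
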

\begin{proof}
Let $\ell(\mathbf{x}) = \mathbf{d}^\top \mathbf{x} + \beta$. We seek $\ell'(\mathbf{x}) = {\mathbf{d}'}^\top \mathbf{x} + \beta'$ such that $f(-\mathbf{x}) - \ell'(-\mathbf{x}) = f(\mathbf{x}) - \ell(\mathbf{x})$ for all $\mathbf{x}$.

Since $f(-\mathbf{x}) = \mathbf{x}^\top Q \mathbf{x} - \mathbf{c}^\top \mathbf{x} + \alpha$ and $\ell'(-\mathbf{x}) = -{\mathbf{d}'}^\top \mathbf{x} + \beta'$, we need:
\[
\mathbf{x}^\top Q \mathbf{x} - \mathbf{c}^\top \mathbf{x} + \alpha + {\mathbf{d}'}^\top \mathbf{x} - \beta' = \mathbf{x}^\top Q \mathbf{x} + \mathbf{c}^\top \mathbf{x} + \alpha - \mathbf{d}^\top \mathbf{x} - \beta.
\]
Matching coefficients gives $\mathbf{d}' = 2\mathbf{c} - \mathbf{d}$ and $\beta' = \beta$.

With these choices, $|f(\mathbf{y}) - \ell'(\mathbf{y})| = |f(-\mathbf{y}) - \ell(-\mathbf{y})| \leq \varepsilon$ for all $\mathbf{y} \in -S$.
\end{proof}

Note that the invariance properties above hold for general quadratic forms in $\R^n$, not just $\R^2$.

\subsection{Reduction to standard form}
\label{sec:standard-form}
We now specialize to $\R^2$. The following lemma shows that for indefinite quadratic forms, it suffices to study the standard form $F(x,y) = xy$.

\begin{lemma}[Reduction to standard form]
\label{lem:reduction-standard}
Let $G(x,y) = ax^2 + 2bxy + cy^2 + dx + ey + g$ be an indefinite quadratic form (i.e., $D := ac - b^2 < 0$). Then there exists a linear transformation $\Phi: (x,y) \mapsto (u,v)$ and a nonzero constant $\kappa$ such that the quadratic part of $G$ becomes $\kappa \cdot uv$. Consequently, any triangulation for $F(u,v) = uv$ can be transported to a triangulation for $G$ via the inverse linear map, with the approximation tolerance rescaled by $|\kappa|$. More precisely, if $f$ is a \pwl $\varepsilon$-approximation of $F(u,v)=uv$ on a triangulation $\mathcal{T}$ in $(u,v)$-coordinates, then $(f\circ \Phi)$ is a \pwl $(|\kappa|\varepsilon)$-approximation of $G$ on the transported triangulation $\Phi^{-1}(\mathcal{T})$ in $(x,y)$-coordinates. Equivalently, an $\varepsilon$-approximation of $G$ corresponds to an $(\varepsilon/|\kappa|)$-approximation of $uv$.
\end{lemma}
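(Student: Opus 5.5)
The plan is to diagonalize the quadratic part of $G$ over the asymptotic directions of the saddle, and then handle the linear terms separately.

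\emph{Step 1: factor the quadratic part.} Write $q(x,y) = ax^2+2bxy+cy^2$. Since $D<0$, the binary form $q$ has two distinct real isotropic directions, so it factors over $\R$ as a product of two linearly independent linear forms:
\[
q(x,y) = \kappa\,(\alpha_1 x+\beta_1 y)(\alpha_2 x+\beta_2 y).
\]
If $a\neq 0$, take $\kappa=a$ and the lines $x - r_i y$, where $r_{1,2} = (-b\pm\sqrt{-D})/a$ are the distinct real roots of $a r^2+2br+c=0$. If $a=0$, then $b\neq0$ and $q = y(2bx+cy)$. Set $u=\alpha_1x+\beta_1y$, $v=\alpha_2x+\beta_2y$, and $\Phi(x,y)=(u,v)$. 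Linear independence makes $\Phi$ invertible, and $q = \kappa\,uv$ with $\kappa\neq0$.

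\emph{Step 2: handle the linear part.} Write $G(x,y)=\kappa\,F(\Phi(x,y)) + L(x,y)$, where $F(u,v)=uv$ and $L$ is affine. Affine terms do not matter for $L_\infty$ approximation, because adding an affine function to each linear piece keeps it affine on every triangle.

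\emph{Step 3: transport the approximation.} Given a \pwl $\varepsilon$-approximation $f$ of $F$ on $\TT$, define
\[
g := \kappa\,(f\circ\Phi)+L .
\]
Since $\Phi^{-1}$ is linear and invertible, it maps triangles to triangles and preserves the face-intersection property. Preimages of bounded sets are bounded, so local finiteness is also preserved, and $\Phi^{-1}(\TT)$ is a locally finite triangulation. On each $\Phi^{-1}(T)$, $g$ is affine, so $g$ is \pwl with respect to $\Phi^{-1}(\TT)$. The minimum-index convention carries over if we keep the same indices. The error is
\[
g-G=\kappa\,\bigl(f-F\bigr)\circ\Phi,
\]
so $\|g-G\|_\infty = |\kappa|\,\|f-F\|_\infty \le |\kappa|\varepsilon$. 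This is precisely the stated $|\kappa|\varepsilon$ bound.

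This means the claim as literally stated holds for $\kappa(f\circ\Phi)+L$ rather than for $f\circ\Phi$ alone. The intended meaning is clearly "up to the scaling $\kappa$ and the affine correction $L$", and I would write it that way. Alternatively, one can absorb $\kappa$ into $\Phi$ by rescaling $u\mapsto \kappa u$, so that $q=uv$ exactly. I would keep $\kappa$ explicit to match the tolerance statement.

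\emph{Step 4: the converse.} Apply the same argument with $\Phi$ replaced by $\Phi^{-1}$: $(g - L)\circ\Phi^{-1}/\kappa$ is an $(\varepsilon/|\kappa|)$-approximation of $uv$. This gives the stated equivalence.

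The only delicate points are the factorization in the degenerate case $a=0$ (and $c=0$) and checking that linear maps preserve the triangulation axioms and the \pwl structure. Both are routine.
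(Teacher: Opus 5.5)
Your proposal is correct and follows essentially the same route as the paper: choose $\Phi$ so that the two asymptotic lines of the saddle become the coordinate axes, split off the affine part of $G$, and note that the sup-norm error scales by $|\kappa|$. Your single factorization with the cases $a\neq 0$ and $a=0$ simply packages the paper's four-case computation more uniformly, and your correction that the transported approximation must be $\kappa(f\circ\Phi)+L$ rather than $f\circ\Phi$ is right and is exactly what the paper's proof does implicitly when it says the linear terms ``can be handled separately'' and that $G=\kappa\, uv+\text{(linear)}$.
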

\begin{proof}
We consider three cases based on the coefficients. In each case, linear terms can be handled separately by adding a linear function to the approximation.

\textbf{Case 1:} $a = c = 0$. Then $D = -b^2 < 0$ requires $b \neq 0$, and $G = 2bxy + \text{(linear)}$. The substitution $(u,v) = (x, 2by)$ gives $G = uv + \text{(linear)}$, with area scaling factor $|2b|$.

\textbf{Case 2a:} $c = 0$, $a \neq 0$. Then $G = ax^2 + 2bxy + \text{(linear)}$ with $b \neq 0$ (since $D = -b^2 < 0$). The shear transformation 
\[
x = u, \quad y = v - \frac{a}{2b}u
\]
gives $ax^2 + 2bxy = 2b \cdot uv$, with area scaling factor $|2b|$.

\textbf{Case 2b:} $a = 0$, $c \neq 0$. Similarly, $G = cy^2 + 2bxy + \text{(linear)}$ with $b \neq 0$. The shear transformation
\[
y = v, \quad x = u - \frac{c}{2b}v
\]
gives $cy^2 + 2bxy = 2b \cdot uv$, with area scaling factor $|2b|$.

\textbf{Case 3:} $a \neq 0,$ $c \neq 0$. The asymptotic directions of $ax^2 + 2bxy + cy^2 = 0$ have slopes $m_\pm = \frac{-b \pm \sqrt{b^2-ac}}{c}$, which are the roots of $cm^2 + 2bm + a = 0$. The transformation
\begin{equation}
\label{eq:reduction-transform}
\begin{pmatrix} x \\ y \end{pmatrix} = \begin{pmatrix} 1 & 1 \\ m_+ & m_- \end{pmatrix} \begin{pmatrix} u \\ v \end{pmatrix}
\end{equation}
maps these asymptotes to the coordinate axes. Substituting into the quadratic part:
\begin{align*}
ax^2 + 2bxy + cy^2 &= (a + 2bm_+ + cm_+^2)u^2 + (a + 2bm_- + cm_-^2)v^2 \\
&\quad + 2(a + b(m_+ + m_-) + cm_+ m_-)uv.
\end{align*}
Since $m_\pm$ satisfy $cm^2 + 2bm + a = 0$, the coefficients of $u^2$ and $v^2$ vanish. By Vieta's formulas, $m_+ + m_- = -\tfrac{2b}{c}$ and $m_+ m_- = \tfrac{a}{c}$, so the coefficient of $uv$ is $\tfrac{4D}{c}$. The Jacobian is $|m_+ - m_-| = \tfrac{2\sqrt{|D|}}{|c|}$.

In all cases, the quadratic part becomes $\kappa \cdot uv$ for some $\kappa = \tfrac{4D}{c} = \tfrac{4(ac - b^2)}{c} \neq 0$. Absorbing $|\kappa|$ into one variable gives the standard form $F(u,v) = uv$. In this normalization, triangle density scales with the Jacobian of $\Phi$, while the approximation tolerance scales by $|\kappa|$.

\textbf{Error preservation:} Let $\Phi: (x,y) \mapsto (u,v)$ be the linear transformation with Jacobian $J = |\det(D\Phi)|$. If $\ell$ is a linear $\varepsilon$-approximation of $\kappa \cdot uv$ over a triangle $T'$ in $(u,v)$-coordinates, then $\tilde{\ell}(x,y) := \ell(\Phi(x,y))$ is a linear $|\kappa|\varepsilon$-approximation of $G$ over $T = \Phi^{-1}(T')$ in $(x,y)$-coordinates. The $L_\infty$ error scales by $|\kappa|$ because the quadratic part transforms as $G(x,y) = \kappa \cdot uv + \text{(linear)}$, and linear terms contribute only to vertex deviations. Triangle areas scale by $1/J$, so triangle density scales by $J$. Combining these scaling factors, optimal density results for $F(x,y) = xy$ extend to general $G$ with the appropriate factor.
\end{proof}

By Lemma~\ref{lem:reduction-standard}, we assume $F(x,y) = xy$ throughout the remainder of this paper. All density results for the standard form extend to general indefinite quadratics via the appropriate scaling factor.

The standard form $F(x,y) = xy$ has a special anti-symmetry property that enables additional simplifications:

\begin{proposition}[Invariance under axis reflection]
\label{prop:reflection}
Let $T \subset \R^2$ be a triangle and let $\ell$ be a linear $\varepsilon$-approximation of $F(x,y) = xy$ over $T$. Then there exist linear $\varepsilon$-approximations of $F$ over the axis-reflected triangles:
\begin{itemize}
    \item $T_x := \{(x,-y) : (x,y) \in T\}$ (reflection across the $x$-axis), and
    \item $T_y := \{(-x,y) : (x,y) \in T\}$ (reflection across the $y$-axis).
\end{itemize}
\end{proposition}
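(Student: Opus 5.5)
The plan is to construct $\ell'$ explicitly, the same way as in the proofs of Lemmas~\ref{lem:invariance_trans} and~\ref{lem:invariance_reflection}. The key observation is that $F(x,y)=xy$ is \emph{anti-symmetric} under each axis reflection: with $\sigma_x(x,y)=(x,-y)$ we have $F\circ\sigma_x=-F$, and likewise for $\sigma_y(x,y)=(-x,y)$. So composing with the reflection flips the sign of $F$, and we compensate by also flipping the sign of the approximating plane.

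Write $\ell(x,y)=m_1x+m_2y+\beta$. For $T_x$, define
\[
\ell'(x,y) := -\ell(x,-y) = -m_1x+m_2y-\beta ,
\]
which is again affine. For any $(u,w)\in T_x$ we have $(u,-w)\in T$. Then
\[
F(u,w)-\ell'(u,w) = uw + \ell(u,-w) = -\bigl(u(-w) - \ell(u,-w)\bigr) = -\bigl(F(u,-w)-\ell(u,-w)\bigr).
\]
Hence $|F(u,w)-\ell'(u,w)| = |F(u,-w)-\ell(u,-w)|\le\varepsilon$, because $\ell$ is an $\varepsilon$-approximation on $T$. Also, $T_x$ is a triangle, being the image of $T$ under an invertible linear map.

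For $T_y$, set $\ell''(x,y):=-\ell(-x,y)$ and repeat the same computation verbatim. Alternatively, note that $\sigma_y=-\sigma_x$ and combine the $T_x$ case with Lemma~\ref{lem:invariance_reflection}.

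No step is a real obstacle; this is a direct verification. The only point to watch is that the sign flip of the plane is required. Simply reusing $\ell\circ\sigma_x$ does not work in general, since $F\circ\sigma_x=-F$. As a byproduct, the vertex deviations of $\ell'$ are the negatives of those of $\ell$, which is worth recording for later use.
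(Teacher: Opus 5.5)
Your proposal is correct and takes the same route as the paper: define $\ell_x(x,y) := -\ell(x,-y)$, use $F(x,-y) = -F(x,y)$ to show the error on $T_x$ is the negative of the error on $T$, and handle $T_y$ analogously. Your alternative derivation of the $T_y$ case via $\sigma_y = -\sigma_x$ and Lemma~\ref{lem:invariance_reflection} is also valid.
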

\begin{proof}
Let $\ell(x,y) = \alpha x + \beta y + \gamma$. For $T_x$, define $\ell_x(x,y) := -\ell(x,-y) = -\alpha x + \beta y - \gamma$. Using the anti-symmetry $F(x,-y) = -F(x,y)$: for $(x',y') = (x,-y) \in T_x$ with $(x,y) \in T$,
\[
\ell_x(x',y') - F(x',y') = -\ell(x',-y') - F(x',y') = -\ell(x,y) + F(x,y) = -[\ell(x,y) - F(x,y)].
\]
Thus $|\ell_x - F|$ over $T_x$ equals $|\ell - F|$ over $T$. The case for $T_y$ is analogous (define $\ell_y(x,y) := -\ell(-x,y) = \alpha x - \beta y - \gamma$).
\end{proof}

\subsection{Tiling the plane with a single triangle}
\label{sec:tiling}
We now show how to derive $\varepsilon$-optimal triangulations from a single $\varepsilon$-optimal triangle.
An $\varepsilon$-optimal triangle is a triangle that has maximum area while admitting a linear $\varepsilon$-approximation (possibly with additional constraints like continuity or one-sidedness).

By Lemma~\ref{lem:invariance_trans}, we can translate any triangle $T$ with a linear $\varepsilon$-approximation arbitrarily in the plane while preserving the approximation error.
By Lemma~\ref{lem:invariance_reflection}, we can also obtain a linear $\varepsilon$-approximation over the point-reflection $T' = -T$.
Since $T$ and $T'$ form a parallelogram, and parallelograms tile the plane, the entire plane can be triangulated using translations of $T$ and $T'$. If $T$ has area $A(T)$, this triangulation has density $1/A(T)$. See \cref{fig:translate-reflect-tile} for an illustration.
\begin{lemma}[\cite{Atariah:2018}]
    \label{lem:area_density}
    Let $T$ be a triangle with area $A(\varepsilon)$ and let $f: \R^2 \to \R$ be a linear approximation of $F$ with an approximation error of $\varepsilon>0$. %
    Then there exists a \pwl approximation of $F$ over the plane with an approximation error of $\varepsilon$
    and a triangle density $\nicefrac{1}{A(\varepsilon)}$.
\end{lemma}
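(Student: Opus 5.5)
The plan is to build the triangulation explicitly as a parallelogram tiling generated by $T$ and its point reflection, assign to every tile a linear $\varepsilon$-approximation using Lemmas~\ref{lem:invariance_trans} and~\ref{lem:invariance_reflection}, and then compute the density by counting tiles that meet $Q_\ell$.

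\textbf{Setup.} Write $T=\mathrm{conv}\{p_0,p_1,p_2\}$ and set $u=p_1-p_0$ and $w=p_2-p_0$. These vectors are linearly independent, and $|\det(u,w)| = 2A(\varepsilon)$. Consider the point-reflected triangle $-T$ translated by $p_1+p_2$, namely $T'=\mathrm{conv}\{p_1,p_2,p_1+p_2-p_0\}$. Then $T\cup T'$ is the parallelogram $P=p_0+\{su+tw: s,t\in[0,1]\}$, and the two triangles meet exactly in the common edge $[p_1,p_2]$.

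\textbf{The triangulation.} Define $\TT=\{T+iu+jw,\;T'+iu+jw : (i,j)\in\mathbb{Z}^2\}$. The translates $P+iu+jw$ tile $\R^2$, because $\R^2=\bigcup_{i,j}(p_0+[i,i+1]u+[j,j+1]w)$ and two such cells meet in a common face. Inside each cell the diagonal split is the same, so neighbouring triangles meet in a full shared edge or a shared vertex. Hence $\TT$ is a triangulation in the sense of Definition~\ref{def:triangulation_fully}. It is locally finite because $B_\infty(R)$ meets only the finitely many cells whose lattice coordinates are bounded by a constant times $R$.

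\textbf{The approximation.} Lemma~\ref{lem:invariance_trans} applied to $f$ on $T$ gives a linear $\varepsilon$-approximation on every $T+iu+jw$. Lemma~\ref{lem:invariance_reflection} gives one on $-T$, and applying Lemma~\ref{lem:invariance_trans} again (with translation $p_1+p_2+iu+jw$) gives one on every translate of $T'$. Enumerate $\TT$ countably and define the \pwl function $g$ by the minimum-index rule of Definition~\ref{def:pwl-functions}. Every point $x$ is then evaluated by a linear piece that is an $\varepsilon$-approximation on a triangle containing $x$. Continuity is not required here, so $\|g-F\|_\infty\le\varepsilon$.

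\textbf{Density.} Let $D=\mathrm{diam}(P)$. Any triangle meeting $Q_\ell$ lies in its cell, and that cell is contained in the $D$-enlargement of $Q_\ell$, a square of side $\ell+2D$. Conversely, every cell contained in $Q_\ell$ contributes two triangles that meet $Q_\ell$. The cells have disjoint interiors and area $2A(\varepsilon)$. An area comparison therefore gives
$$\frac{(\ell-2D)^2}{2A(\varepsilon)} \le N_\ell \le \frac{(\ell+2D)^2}{2A(\varepsilon)}$$
for the number $N_\ell$ of cells meeting $Q_\ell$. For the lower bound, every cell meeting the shrunk square $Q_{\ell-2D}$ lies inside $Q_\ell$, and these cells cover $Q_{\ell-2D}$. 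The number of triangles is between $2$ times the lower bound and $2$ times the upper bound. Dividing by $\ell^2$ and letting $\ell\to\infty$ shows the limit exists and equals $1/A(\varepsilon)$.

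The main obstacle is only bookkeeping. One must verify the face-to-face property and make the counting estimate rigorous; for the counting, the boundary layer of cells contributes $O(\ell)$ and so vanishes after normalization by $\ell^2$.
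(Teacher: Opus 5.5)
Your proposal is correct and follows the same route as the paper: you point-reflect $T$ using Lemma~\ref{lem:invariance_reflection}, translate the reflected copy to complete a parallelogram, and tile the plane by translates using Lemma~\ref{lem:invariance_trans}, which gives density $1/A(\varepsilon)$. Your face-to-face check, the minimum-index definition of $g$, and the two-sided counting estimate for $Q_\ell$ make rigorous the steps the paper only asserts.
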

\begin{proof}
Let $f:\R^2 \to \R$ with $f(x,y)=\alpha x + \beta y + \gamma$ be a linear approximation of $F$ such that $\varepsilon_{f,F}(T)\leq \varepsilon$. 
According to Lemma~\ref{lem:invariance_reflection}, we can define a linear function $f':\R^2\to\R$ by $f'(x,y)=-\alpha x - \beta y + \gamma$ such that $\varepsilon_{f',F}(T')\leq \varepsilon$ with $T'\define \{(-x,-y) \in \R^2: (x,y) \in T\}$.
As a consequence, we have the same maximum approximation error over $T$ and $T'$.
According to Lemma~\ref{lem:invariance_trans}, we can translate copies of $T$ and $T'$ arbitrarily in the plane while maintaining the approximation error. 
Since $T'$ is a rotation of $T$ by 180 degrees about the origin, we can shift $T'$ to form a parallelogram with $T$.
Thus we can tile $\R^2$ with copies of $T$ and $T'$. 
Since $T$ and $T'$ each have area $A$, we obtain an $\varepsilon$-triangulation of $\R^2$ with density $\tfrac{1}{A}$.
\end{proof}

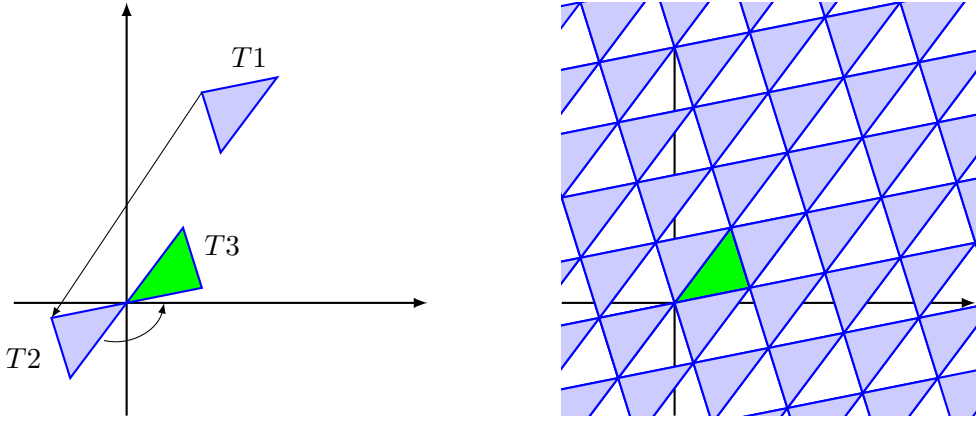
\begin{figure}[h]
\centering
\begin{tikzpicture}[>=latex, mytriangle/.style={thick, draw=blue, fill=blue!20}]

  \draw[thick,->] (-1.5,0) -- (4,0) node[anchor=north west] {};
  \draw[thick,->] (0,-1.5) -- (0,4) node[anchor=south east] {};

  \def\xOne{0} %
  \def\yOne{0} %
  \def\xTwo{1} %
  \def\yTwo{.2} %
  \def\xThree{0.75} %
  \def\yThree{1} %

  \def\t{2} %
  \def\r{3} %

  \draw[mytriangle, fill = green] (\xOne,\yOne) -- (\xTwo,\yTwo) -- (\xThree,\yThree) -- cycle;
  \node at (\xOne+0.9,\yOne+0.5) [above right] {$T3$};

  \draw[mytriangle] (-\xOne,-\yOne) -- (-\xTwo,-\yTwo) -- (-\xThree,-\yThree) -- cycle;
  \node at (-\xOne-1,-\yOne-1) [above left] {$T2$};

  \draw[mytriangle] (-\xOne+\t,-\yOne+\r) -- (-\xTwo+\t,-\yTwo+\r) -- (-\xThree+\t,-\yThree+\r) -- cycle;
  \node at (-\xOne+\t,-\yOne+\r) [above left] {$T1$};

\draw[->](-\xTwo+\t,-\yTwo+\r) -- (-\xTwo,-\yTwo);
\draw[->] (-0.3,-0.5) to [bend right=45] (0.5,0.0);
\end{tikzpicture}%
\hspace{1.5cm}%
\begin{tikzpicture}[>=latex, mytriangle/.style={thick, draw=blue, fill=blue!20}]

  \draw[thick,->] (-1.5,0) -- (4,0) node[anchor=north west] {};
  \draw[thick,->] (0,-1.5) -- (0,4) node[anchor=south east] {};

  \def\xOne{0} %
  \def\yOne{0} %
  \def\xTwo{1} %
  \def\yTwo{.2} %
  \def\xThree{0.75} %
  \def\yThree{1} %

  \def\t{\xTwo+\xThree} %
  \def\r{\yTwo+\yThree} %

  \draw[mytriangle, fill = green] (\xOne,\yOne) -- (\xTwo,\yTwo) -- (\xThree,\yThree) -- cycle;
  \node at (\xOne+0.9,\yOne+0.5) [above right] {$T$};

  \draw[mytriangle] (-\xOne+\t,-\yOne+\r) -- (-\xTwo+\t,-\yTwo+\r) -- (-\xThree+\t,-\yThree+\r) -- cycle;
  
  \draw[mytriangle] (-\xOne,-\yOne) -- (-\xTwo,-\yTwo) -- (-\xThree,-\yThree) -- cycle;

  \clip (-1.5,-1.5) rectangle (4,4);

  \foreach \k in {-5,...,5}{
    \foreach \p in {-5,...,5}{
      \draw[mytriangle] 
        ($(-\xOne,-\yOne)+\k*(\xTwo,\yTwo)+\p*(\xThree,\yThree)$) -- 
        ($(-\xTwo,-\yTwo)+\k*(\xTwo,\yTwo)+\p*(\xThree,\yThree)$) -- 
        ($(-\xThree,-\yThree)+\k*(\xTwo,\yTwo)+\p*(\xThree,\yThree)$) -- cycle;
      
      \draw[mytriangle] 
        ($(-\xOne+\t,-\yOne+\r)+\k*(\xTwo,\yTwo)+\p*(\xThree,\yThree)$) -- 
        ($(-\xTwo+\t,-\yTwo+\r)+\k*(\xTwo,\yTwo)+\p*(\xThree,\yThree)$) -- 
        ($(-\xThree+\t,-\yThree+\r)+\k*(\xTwo,\yTwo)+\p*(\xThree,\yThree)$) -- cycle;
    }
  }
\end{tikzpicture}
\caption{Left: We show how to translate and then possibly reflect a triangle to get a triangle touching the origin and in the first orthant.  Right: We show that with that triangle, we can tile the plane.}
\label{fig:translate-reflect-tile}

\end{figure}

\begin{remark}
\label{rem:continuity-deviations}
    Although Lemma~\ref{lem:area_density} guarantees that we can construct a triangulation with a constant maximum approximation error from any given triangle, it is not guaranteed that this approximation will be continuous. We address this further in Remark~\ref{rem:continuous-constraint}.
\end{remark}

We now summarize the reductions established so far for the two-dimensional case. A \emph{parallelogram tiling} is a triangulation of $\R^2$ consisting of translated copies of a triangle $T$ and its point-reflection $T' = -T$, arranged so that $T$ and $T'$ form a parallelogram that tiles the plane by translation.

\begin{corollary}[Normalized form for optimal triangles]
\label{cor:normalized-triangle}

By Lemma~\ref{lem:area_density}, finding an $\varepsilon$-optimal triangulation for $F(x,y) = xy$ reduces to finding a single triangle $T$ of maximum area that admits a linear $\varepsilon$-approximation. This reduction applies both to general \pwl approximations and to continuous \pwl approximations. Furthermore:
\begin{enumerate}
    \item By Lemma~\ref{lem:invariance_trans}, we may assume one vertex of $T$ is at the origin, \ie $v_1=(x_1,y_1)=(0,0)$.
    \item By Lemma~\ref{lem:invariance_reflection} and Proposition~\ref{prop:reflection}, if the other two vertices lie in the same quadrant (both in Q1 or both in Q3), we may normalize so that both lie in the closed first quadrant. (Vertices in Q2 or Q4 are also possible and would lead to an equivalent ``HLL'' deviation pattern; the analysis is symmetric.)
\end{enumerate}
Thus, the search for optimal triangulations reduces to maximizing the area of a triangle with one vertex at the origin, subject to the constraint that the triangle admits a linear $\varepsilon$-approximation.
\end{corollary}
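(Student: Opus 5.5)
The corollary has two parts: a reduction from triangulations of the plane to a single triangle, and two normalizations of that triangle. I would prove them in that order.

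\textbf{Reduction, upper bound.} Given any triangle $T$ admitting a linear $\varepsilon$-approximation, Lemma~\ref{lem:area_density} produces a parallelogram tiling with error $\varepsilon$ and density $1/A(T)$. Hence $\delta^*_\varepsilon \le 1/\sup_T A(T)$, where the supremum runs over admissible triangles. For continuous approximations the same statement holds once we restrict to parallelogram tilings in which the translated copies of $T$ and $T'$ carry matching vertex deviations; Remark~\ref{rem:continuous-constraint} is where that consistency condition gets formalized.

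\textbf{Reduction, lower bound.} Here I must show that no triangulation beats $1/A^*$, where $A^*$ is the maximal admissible area. Let $\TT$ be an $\varepsilon$-triangulation. Each of its triangles admits a linear $\varepsilon$-approximation, namely the restriction of $f$, so each has area at most $A^*$. The triangles meeting $Q_\ell$ cover $Q_\ell$, which gives
\[
|\{T\in\TT : T\cap Q_\ell\neq\emptyset\}|\cdot A^* \ge \ell^2 .
\]
Taking the $\limsup$ yields $\delta(\TT)\ge 1/A^*$.

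This step assumes the area bound $A^*$ is finite and uniform over all admissible triangles, which is true because the error constraint forces bounded edge products. I expect the main obstacle to be stating this uniform finiteness cleanly. At this point in the paper I would simply take it as the definition of the supremum, since later sections compute $A^*$ explicitly.

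\textbf{Normalization, item 1.} By Lemma~\ref{lem:invariance_trans}, translating by $-v_1$ preserves admissibility and area, so we may place $v_1$ at the origin.

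\textbf{Normalization, item 2.} Suppose $v_2,v_3$ lie in a common closed quadrant. If that quadrant is Q3, the point reflection $-T$ is admissible by Lemma~\ref{lem:invariance_reflection}; it still has a vertex at the origin and now has $v_2,v_3$ in Q1. If instead the quadrant is Q2 or Q4, Proposition~\ref{prop:reflection} lets us reflect across an axis to reach Q1. Under that reflection the error function changes sign, so the deviations flip, turning a pattern like LHH into HLL. This is why the statement notes that the analysis is symmetric rather than identical.

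All of these maps preserve area, so the maximization problem is unchanged, which gives the final sentence of the corollary.
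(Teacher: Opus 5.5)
Your proposal is correct and follows the paper's route: Lemma~\ref{lem:area_density} for achievability, Lemma~\ref{lem:invariance_trans} for item~1, and point/axis reflections for item~2 (your Q2/Q4 treatment via negated deviations is exactly the paper's ``HLL'' remark). You also rightly confine the continuous case to parallelogram tilings with matching deviations, since a continuous lower bound beyond those is precisely the open Conjecture~\ref{conj:continuous-optimality}. Your covering lower bound (the triangles meeting $Q_\ell$ cover $Q_\ell$ and each has area at most $A^*$) is the argument the paper defers to the proof of Theorem~\ref{thm:optimal-triangulation}, and your version is the cleaner one: it counts exactly the triangles in Definition~\ref{def:triangle-density}, so it needs no boundary correction, unlike the paper's count of triangles contained in $Q_R$.
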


This completes the reduction. In the remainder of the paper, we analyze the error structure of a single triangle (Section~\ref{sec:error-analysis}), formulate and solve the resulting optimization problem (Section~\ref{sec:solution}), and prove the main theorems (Section~\ref{sec:proofs}).

\section{Error analysis of a single triangle}
\label{sec:error-analysis}
Having reduced the global optimization problem to finding a single optimal triangle, we now analyze how the approximation error behaves over a triangle. This analysis reveals a key structure: the maximum error is always attained on an edge, and the error along each edge admits a simple closed-form expression. We also introduce \emph{pseudo-Euclidean motion}, a transformation that preserves both area and approximation error, which enables further normalization.

\paragraph{Setup and notation}
Throughout this section, we work with the standard form $F(x,y) = xy$.
Let $T$ be a triangle with vertices $v_1 = (x_1, y_1)$, $v_2 = (x_2, y_2)$, and $v_3 = (x_3, y_3)$. A linear approximation $\ell\colon \R^2 \to \R$ of $F$ over $T$ is characterized by its values at the vertices.

\paragraph{Deviations}
We define the \emph{deviation} at vertex $v_i$ as
\[
d_i := \ell(v_i) - F(v_i) = \ell(x_i, y_i) - x_i y_i.
\]
The deviation $d_i$ measures the signed vertical distance between the approximating plane and the surface $F$ at vertex $v_i$: positive deviations correspond to overestimation, negative to underestimation. Figure~\ref{fig:deviation-3d} illustrates this concept.

\begin{remark}[Continuity constraint]
\label{rem:continuous-constraint}
If we require the \pwl approximation to be \emph{continuous}, then in the parallelogram tiling from Lemma~\ref{lem:area_density}, adjacent triangles must have matching linear functions along shared edges. Consider edge $e_3$ connecting $v_2$ to $v_3$: on one side is triangle $T$ with deviations $d_2$ at $v_2$ and $d_3$ at $v_3$; on the other side is a translated copy of $-T$, which by the construction has deviations $d_3$ at $v_2$ and $d_2$ at $v_3$ (the deviations are swapped). For the two linear functions to agree along the entire edge, we must have $d_2 = d_3$. By similar reasoning for edges $e_1$ and $e_2$, continuity of the \pwl approximation requires
\[
d_1 = d_2 = d_3.
\]
Thus, for continuous approximations, the deviations must be constant across all vertices. We prove in \cref{thm:optimal-triangulation-cont} that this constraint leads to optimal density $\tfrac{\sqrt{3}}{8\varepsilon}$ among parallelogram tilings, and we conjecture (\cref{conj:continuous-optimality}) that no continuous triangulation---parallelogram-based or otherwise---can achieve density better than this constant-deviation optimum.
\end{remark}

\begin{figure}[ht]
    \centering
    
\newcommand{\devpanelcolor}[6]{%
\begin{tikzpicture}
\begin{axis}[
    view={80}{35},
    xlabel={$x$},
    ylabel={$y$},
    zlabel={$z$},
    xmin=-0.7, xmax=3.2,
    ymin=-0.3, ymax=3.2,
    zmin=-2.0, zmax=5.5,
    width=6.2cm,
    height=5.0cm,
    grid=major,
    grid style={gray!20},
    axis lines=center,
    axis line style={thick, -Stealth},
    tick label style={font=\tiny},
    label style={font=\scriptsize},
    ztick={0,2,4},
    xtick={1,2,3},
    ytick={1,2,3},
]
\addplot3[surf, opacity=0.30, colormap={bluegray}{color=(blue!20) color=(blue!50)},
    shader=flat, samples=18, domain=0:3, y domain=0:3, z buffer=sort] {x*y};
\pgfmathsetmacro{\xA}{0}  \pgfmathsetmacro{\yA}{0}  \pgfmathsetmacro{\dA}{#1}
\pgfmathsetmacro{\xB}{2.3} \pgfmathsetmacro{\yB}{0.9} \pgfmathsetmacro{\dB}{#2}
\pgfmathsetmacro{\xC}{0.9} \pgfmathsetmacro{\yC}{2.3} \pgfmathsetmacro{\dC}{#3}
\pgfmathsetmacro{\FA}{\xA*\yA} \pgfmathsetmacro{\FB}{\xB*\yB} \pgfmathsetmacro{\FC}{\xC*\yC}
\pgfmathsetmacro{\zA}{\FA+\dA} \pgfmathsetmacro{\zB}{\FB+\dB} \pgfmathsetmacro{\zC}{\FC+\dC}
\addplot3[fill=gray!15, opacity=0.4, draw=gray!50] coordinates {(\xA,\yA,0) (\xB,\yB,0) (\xC,\yC,0) (\xA,\yA,0)};
\addplot3[thin, dotted, gray!50] coordinates {(\xA,\yA,0) (\xA,\yA,\zA)};
\addplot3[thin, dotted, gray!50] coordinates {(\xB,\yB,0) (\xB,\yB,\zB)};
\addplot3[thin, dotted, gray!50] coordinates {(\xC,\yC,0) (\xC,\yC,\zC)};
\addplot3[fill=#4!35, opacity=0.8, draw=#4!70!black, very thick]
  coordinates {(\xA,\yA,\zA) (\xB,\yB,\zB) (\xC,\yC,\zC) (\xA,\yA,\zA)};
\addplot3[only marks, mark=*, mark size=2.5pt, color=blue!70!black]
  coordinates {(\xA,\yA,\FA) (\xB,\yB,\FB) (\xC,\yC,\FC)};
\addplot3[only marks, mark=square*, mark size=2.5pt, color=#4!80!black]
  coordinates {(\xA,\yA,\zA) (\xB,\yB,\zB) (\xC,\yC,\zC)};
\addplot3[very thick, color=#5, {Stealth[length=1.5mm]}-{Stealth[length=1.5mm]}]
  coordinates {(\xA,\yA,\FA) (\xA,\yA,\zA)};
\addplot3[very thick, color=#5, {Stealth[length=1.5mm]}-{Stealth[length=1.5mm]}]
  coordinates {(\xB,\yB,\FB) (\xB,\yB,\zB)};
\addplot3[very thick, color=#5, {Stealth[length=1.5mm]}-{Stealth[length=1.5mm]}]
  coordinates {(\xC,\yC,\FC) (\xC,\yC,\zC)};
\node[color=#5, font=\scriptsize\bfseries] at (axis cs:{\xA-0.35},{\yA-0.15},{\FA+\dA/2}) {$d_1$};
\node[color=#5, font=\scriptsize\bfseries] at (axis cs:{\xB+0.05},{\yB-0.35},{\FB+\dB/2}) {$d_2$};
\node[color=#5, font=\scriptsize\bfseries] at (axis cs:{\xC+0.45},{\yC+0.1},{\FC+\dC/2}) {$d_3$};
\node[blue!60!black, font=\scriptsize] at (axis cs:2.5,2.5,4.5) {$F$};
\node[#4!70!black, font=\scriptsize] at (axis cs:1.5,0.6,{(\zA+\zB)/2+0.3}) {$L$};
\end{axis}
\node[below, font=\small] at (current bounding box.south) {#6};
\end{tikzpicture}%
}

\begin{tabular}{@{}ccc@{}}
\devpanelcolor{1.8}{1.2}{1.2}{green}{green!50!black}{(a) $d_i > 0$: overestimation} &
\devpanelcolor{-1.5}{-1.0}{-1.0}{red}{red!70!black}{(b) $d_i < 0$: underestimation} &
\devpanelcolor{1.5}{-0.8}{1.0}{orange}{orange!70!black}{(c) Mixed $d_i$: general}
\end{tabular}

    \caption{The surface $F(x,y) = xy$ (blue mesh) and linear approximations $L$ with different deviation patterns. The deviations $d_i$ measure the signed vertical distance from surface to approximation at each vertex: (a)~all positive (green, overestimation), (b)~all negative (red, underestimation), (c)~mixed signs (orange, general approximation).}
    \label{fig:deviation-3d}
\end{figure}

\subsection{Maximum error on edges}

Consider an edge $e$ of triangle $T$ with endpoints $(x_1, y_1)$ and $(x_2, y_2)$ having deviations $d_1$ and $d_2$. Writing the edge displacement as $(\Delta x, \Delta y) = (x_2 - x_1, y_2 - y_1)$, we define the \emph{edge product}
\[
k := \Delta x \cdot \Delta y = (x_2 - x_1)(y_2 - y_1) = F(\Delta x, \Delta y),
\]
which is simply the function $F(x,y) = xy$ evaluated on the edge vector. The edge product measures the curvature of $F$ along the edge: it is positive when the edge vector points into the first or third quadrant ($F$ curves upward), negative when it points into the second or fourth quadrant ($F$ curves downward), and zero for axis-parallel edges (along which $F$ is linear). We call the edge \emph{ascending} if $k > 0$, \emph{descending} if $k < 0$, and \emph{axis-parallel} if $k = 0$. When we wish to emphasize the dependence on parameters, we write $E^*(k, d_a, d_b)$ for the extremal error on an edge with product $k$ and endpoint deviations $d_a, d_b$. For edge $e_i$ we write $E_i^*$ for its extremal error.

\begin{lemma}[Maximum error on edges]
\label{lem:max_error}
\begin{enumerate}[label=\textup{(\roman*)}]
\item[]
\item \label{lem:max_error:boundary} The maximum approximation error over $T$ is attained on an edge.
\item \label{lem:max_error:formula} Parameterizing an edge $e$ by $\lambda \in [0,1]$, the error along $e$ is
\begin{equation}
\label{eq:edge-error-lambda}
E(\lambda) = (1-\lambda)d_1 + \lambda d_2 + \lambda(1-\lambda)k.
\end{equation}
If $k = 0$, then $E(\lambda)$ is linear and the maximum of $|E|$ on $e$ is $\max\{|d_1|, |d_2|\}$.
\item \label{lem:max_error:extremum} If $k \neq 0$, then $E(\lambda)$ is a quadratic with interior extremum at
\begin{equation}
\label{eq:lambda-star}
\lambda^* = \frac{d_2 - d_1}{2k} + \frac{1}{2},
\end{equation}
and when $\lambda^* \in (0,1)$, the extremal error value is
\begin{equation}
\label{eq:edge-error-star}
E^* := E(\lambda^*) = \frac{d_1 + d_2}{2} + \frac{k}{4} + \frac{(d_2-d_1)^2}{4k}.
\end{equation}
Equivalently:
\begin{equation}
\label{eq:edge-error-expanded}
E^* = \frac{(d_1 - d_2 + k)^2 + 4d_2\, k}{4k} = \frac{d_1^2 - 2d_1(d_2 - k) + (d_2 + k)^2}{4k}.
\end{equation}
\end{enumerate}
\end{lemma}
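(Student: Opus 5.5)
The plan is to write the error function explicitly and exploit its structure on $T$. Set $E(\mathbf{x}) = \ell(\mathbf{x}) - F(\mathbf{x})$, where $\ell$ is affine and $F(x,y) = xy$.

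\emph{Part (i).} The Hessian of $E$ is the constant matrix $-\begin{pmatrix}0&1\\1&0\end{pmatrix}$, which is indefinite with eigenvalues $\pm 1$. Consequently, at any interior critical point of $E$ the Hessian is indefinite, so the point is a saddle and not a local extremum. The same applies to $-E$. Since $|E| = \max\{E,-E\}$ is continuous on the compact set $T$, its maximum is attained somewhere, and it is attained where $E$ or $-E$ is maximal. By the saddle argument this cannot happen in the interior, so it happens on $\partial T$, which is the union of the three edges.

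An alternative route avoids the Hessian altogether. Through any interior point there is an axis-parallel segment in $T$, and along such a segment $F$ is linear, so $E$ is affine there. An affine function on a segment attains its extremes at the segment's endpoints, which lie on $\partial T$. I would present whichever argument is cleaner; the axis-parallel one needs no calculus.

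\emph{Part (ii).} Parameterize the edge as $p(\lambda) = (1-\lambda)v_a + \lambda v_b$. Because $\ell$ is affine,
\[
\ell(p(\lambda)) = (1-\lambda)\ell(v_a) + \lambda\,\ell(v_b).
\]
For the quadratic term, expand $F$ at the convex combination:
\[
F(p(\lambda)) = (1-\lambda)F(v_a) + \lambda F(v_b) - \lambda(1-\lambda)(x_b-x_a)(y_b-y_a).
\]
This is the standard identity for a quadratic form evaluated at a convex combination, and I would verify it directly by expanding the products. Subtracting the two expressions gives $E(\lambda) = (1-\lambda)d_1 + \lambda d_2 + \lambda(1-\lambda)k$. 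If $k = 0$, then $E$ is affine in $\lambda$, and $|E|$ is maximized at an endpoint, giving $\max\{|d_1|,|d_2|\}$.

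\emph{Part (iii).} Differentiating gives $E'(\lambda) = d_2 - d_1 + k(1 - 2\lambda)$. Setting this to zero yields $\lambda^*$ as stated. Substituting back, write $\lambda^* = \tfrac12 + s$ with $s = (d_2-d_1)/(2k)$, so that $\lambda^*(1-\lambda^*) = \tfrac14 - s^2$. Then
\[
E^* = \frac{d_1+d_2}{2} + s(d_2-d_1) + k\Bigl(\frac14 - s^2\Bigr),
\]
which simplifies to the stated $\frac{d_1+d_2}{2} + \frac{k}{4} + \frac{(d_2-d_1)^2}{4k}$. The two expanded forms follow by putting everything over the common denominator $4k$ and expanding the squares.

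The only step that requires real care is part (i), in particular making sure the maximum of $|E|$, and not just of $E$, is handled. Treating $E$ and $-E$ symmetrically, or using the axis-parallel segment argument, resolves this. The remaining parts are routine algebra.
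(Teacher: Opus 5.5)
Your proposal is correct. Parts (ii) and (iii) follow the paper's proof. The paper expands $F$ and $\ell$ along the edge directly; your convex-combination identity for the quadratic form is the same computation, packaged more cleanly. It then sets $E'(\lambda)=0$ and substitutes $\lambda^*$ back, as you do.

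For part (i), the paper uses exactly your ``alternative'' argument. $E$ is affine along horizontal and vertical lines, so $|E|$ restricted to an axis-parallel chord through an interior point is convex and is maximized at an endpoint on $\partial T$.

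Your primary argument, via the constant indefinite Hessian $-\begin{pmatrix}0&1\\1&0\end{pmatrix}$, is a genuinely different route. An interior maximizer of $E$ (or of $-E$) over $T$ would be an unconstrained local maximum, hence a critical point with negative semidefinite Hessian, which is impossible. This gives two things:
\begin{itemize}
\item a stronger conclusion: $|E|$ has no interior maximizer at all;
\item generality: it applies verbatim to any quadratic with indefinite Hessian, without first finding lines along which $F$ is affine.
\end{itemize}

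The segment argument is calculus-free, but it only shows that the maximal value is \emph{also} attained on the boundary. Writing $\ell(x,y)=\alpha x+\beta y+\gamma$, the error is $E=\alpha\beta+\gamma-(x-\beta)(y-\alpha)$. At its critical point $(\beta,\alpha)$ it is constant along both axis-parallel chords, so the chord endpoints need not strictly beat the interior point. That weaker statement is all the lemma requires, so either version suffices.
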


\begin{proof}
The function $F(x,y) = xy$ is linear along every axis-parallel line: for fixed $y_0$, we have $F(x, y_0) = y_0 x$, and for fixed $x_0$, we have $F(x_0, y) = x_0 y$. Since $\ell$ is also linear, the error $E(x,y) = \ell(x,y) - F(x,y)$ is linear along every axis-parallel line.

Now suppose the maximum of $|E|$ over $T$ occurs at an interior point $p$. Moving from $p$ along a horizontal or vertical line until hitting the boundary of $T$, the function $|E|$ restricted to this segment is convex (as the absolute value of a linear function). Hence the maximum on this segment is at an endpoint, which lies on the boundary. This contradicts $p$ being the maximizer, so the maximum must occur on an edge. (This argument appears in \cite{hager:2022}.)

\textbf{Error formula.} Parameterize the edge as $(x,y) = (1-\lambda)(x_1, y_1) + \lambda(x_2, y_2)$ for $\lambda \in [0,1]$. Then:
\begin{align*}
F|_e(\lambda) &= \bigl((1-\lambda)x_1 + \lambda x_2\bigr)\bigl((1-\lambda)y_1 + \lambda y_2\bigr) \\
&= (1-\lambda)^2 x_1y_1 + \lambda(1-\lambda)(x_1y_2 + x_2y_1) + \lambda^2 x_2y_2, \\
\ell|_e(\lambda) &= (1-\lambda)(x_1y_1 + d_1) + \lambda(x_2y_2 + d_2).
\end{align*}
Computing the error $E(\lambda) = \ell|_e(\lambda) - F|_e(\lambda)$:
\begin{align*}
E(\lambda) &= (1-\lambda)d_1 + \lambda d_2 + (1-\lambda)x_1y_1 + \lambda x_2y_2 \\
&\quad - (1-\lambda)^2 x_1y_1 - \lambda(1-\lambda)(x_1y_2 + x_2y_1) - \lambda^2 x_2y_2 \\
&= (1-\lambda)d_1 + \lambda d_2 + \lambda(1-\lambda)\bigl(x_1y_1 + x_2y_2 - x_1y_2 - x_2y_1\bigr) \\
&= (1-\lambda)d_1 + \lambda d_2 + \lambda(1-\lambda)(x_2-x_1)(y_2-y_1),
\end{align*}
which gives \eqref{eq:edge-error-lambda}. Note that $E(0) = d_1$ and $E(1) = d_2$, so the formula interpolates between the vertex deviations. Setting $E'(\lambda) = d_2 - d_1 + (1-2\lambda)k = 0$ yields \eqref{eq:lambda-star}. Substituting back and simplifying yields \eqref{eq:edge-error-star}--\eqref{eq:edge-error-expanded}.
\end{proof}

\begin{remark}[Geometric interpretation of edge error]
\label{rem:edge-error-geometric}
The error formula \eqref{eq:edge-error-lambda} has a natural geometric interpretation. The term $(1-\lambda)d_1 + \lambda d_2$ represents linear interpolation of the vertex deviations, while the quadratic term $\lambda(1-\lambda)k$ captures the \emph{curvature} of $F(x,y) = xy$ along the edge.

To see this, consider an edge from $(x_1, y_1)$ to $(x_2, y_2)$ parameterized as $(x(\lambda), y(\lambda)) = (1-\lambda)(x_1, y_1) + \lambda(x_2, y_2)$. The function $F$ restricted to this edge is:
\[
F|_e(\lambda) = x(\lambda) \cdot y(\lambda) = (1-\lambda)^2 x_1y_1 + \lambda(1-\lambda)(x_1y_2 + x_2y_1) + \lambda^2 x_2y_2,
\]
which is quadratic in $\lambda$ (unless the edge is axis-parallel, in which case $k = 0$). The linear approximation $\ell|_e(\lambda) = (1-\lambda)(x_1y_1 + d_1) + \lambda(x_2y_2 + d_2)$ interpolates linearly between the vertex values. The difference $E(\lambda) = \ell|_e(\lambda) - F|_e(\lambda)$ therefore contains a quadratic term proportional to the ``curvature'' of $F$ along the edge, which is measured by the edge product $k = (x_2 - x_1)(y_2 - y_1)$.

The factor $\lambda(1-\lambda)$ is maximized at $\lambda = 1/2$, reflecting that the maximum deviation from linearity occurs at the midpoint of the edge or its endpoints. This geometric insight explains why optimal triangles balance the edge products $k$ against the vertex deviations $d_i$ to minimize the maximum error.
\end{remark}

\subsection{Pseudo-Euclidean motion}
\label{sec:pe-motion}

The saddle surface $z = xy$ possesses a continuous family of symmetries beyond the discrete symmetries (translation, reflection) discussed in Section~3. The level curves $xy = c$ are hyperbolas, and there exist area-preserving transformations that slide points along these hyperbolas while preserving the function value. These transformations are called \emph{pseudo-Euclidean motions} because they are isometries of the pseudo-Euclidean plane---the plane equipped with the indefinite metric $ds^2 = dx\,dy$ rather than the Euclidean metric $ds^2 = dx^2 + dy^2$. Just as Euclidean rotations preserve circles ($x^2 + y^2 = c$), pseudo-Euclidean motions preserve hyperbolas ($xy = c$).

This concept appears in the study of indefinite quadratic forms and saddle surfaces; see Pottmann et al.\ \cite{Pottmann:2000} for applications to surface approximation. For our purposes, pE-motion provides an additional degree of freedom to normalize optimal triangles: if two vertices lie on the same hyperbola, we can transform the triangle to a symmetric configuration without changing its area or approximation error.

\begin{lemma}[Pseudo-Euclidean motion preserves area and error]
\label{lem:pe-motion}
The \emph{pseudo-Euclidean motion} (pE-motion) with parameter $m \neq 0$ is the linear transformation
\begin{equation}
\label{eq:pe-motion}
(x,y) \mapsto (mx, \tfrac{y}{m}).
\end{equation}
Let $T$ be a triangle with vertices $(x_1,y_1)$, $(x_2,y_2)$, $(x_3,y_3)$ and let $\ell$ be a linear approximation of $F(x,y) = xy$ over $T$ with deviations $d_1, d_2, d_3$. Let $T'$ be the image of $T$ under a pE-motion. Then:
\begin{enumerate}
    \item $T'$ has the same area as $T$.
    \item There exists a linear approximation $\ell'$ of $F$ over $T'$ with the same deviations $d_1, d_2, d_3$ and the same maximum approximation error.
\end{enumerate}
\end{lemma}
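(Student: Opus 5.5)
The plan is to prove both parts directly from the linearity of the map $\Phi_m(x,y) = (mx, y/m)$ together with the identity $F\circ\Phi_m = F$.

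For part 1, note that $\Phi_m$ is linear with matrix $\mathrm{diag}(m, 1/m)$. Its determinant is $1$. Hence $A(T') = |\det \Phi_m|\, A(T) = A(T)$.

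For part 2, I will define $\ell' := \ell \circ \Phi_m^{-1}$, that is,
\[
\ell'(x,y) = \ell\bigl(x/m,\; m y\bigr).
\]
\begin{itemize}
\item $\ell'$ is linear (affine), because $\Phi_m^{-1}$ is linear.
\item $F$ is invariant under pE-motion: $F(\Phi_m(x,y)) = (mx)(y/m) = xy = F(x,y)$.
\item Write $v_i' := \Phi_m(v_i)$. Then $d_i' = \ell'(v_i') - F(v_i') = \ell(v_i) - F(v_i) = d_i$, so the deviations are unchanged.
\item Every point $p' \in T'$ has the form $p' = \Phi_m(p)$ with $p \in T$. The error satisfies $E_{\ell',F}(p') = \ell(p) - F(p) = E_{\ell,F}(p)$, so the error functions agree pointwise under the bijection $T \to T'$. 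Therefore $\varepsilon_{\ell',F}(T') = \varepsilon_{\ell,F}(T)$.
\end{itemize}

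As a consistency check, I will also verify the edge-product invariance. An edge vector $(\Delta x, \Delta y)$ maps to $(m\Delta x, \Delta y/m)$, so $k$ is unchanged. By Lemma~\ref{lem:max_error}, the edge errors, which depend only on $k$ and the $d_i$, are therefore the same, confirming the result.

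No step is a real obstacle. The only care needed is noting that $m \neq 0$, which makes $\Phi_m$ invertible and ensures that $T'$ is still a nondegenerate triangle.
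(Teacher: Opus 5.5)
Your proof is correct, and it takes a more direct route than the paper's. For part 2, the paper first translates a vertex to the origin. It then invokes Lemma~\ref{lem:max_error} to reduce the maximum error to edge errors, which depend only on the endpoint deviations $d_i, d_j$ and the edge product $k_{ij}$. Finally it observes that every $k_{ij}$ is invariant under $(x,y)\mapsto(mx,y/m)$. It never writes $\ell'$ down explicitly (``if we define $\ell'$ appropriately''). You instead set $\ell' = \ell\circ\Phi_m^{-1}$ and use $F\circ\Phi_m = F$ to get $E_{\ell',F}\circ\Phi_m = E_{\ell,F}$ pointwise on $T$. Your $\ell'$ coincides with the affine interpolant of the values $F(v_i')+d_i$, so it is the same function the paper uses implicitly.

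Your argument buys several things:
\begin{itemize}
\item It is self-contained: no edge lemma, no translation step, and no case distinction on whether $\lambda^*\in(0,1)$.
\item It gives pointwise equality of the error functions rather than only equality of their maxima, so one-sidedness ($\ell\le F$ or $\ell\ge F$) is preserved as well.
\item The error statement works verbatim for any $F$ and any invertible affine map $\Phi$ with $F\circ\Phi = F$.
\end{itemize}

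What the paper's edge-product formulation buys is reuse. It is the form needed for the ``single-vertex pE-motion'' in Remark~\ref{rem:pe-properties}, where only $v_2$ is moved. That move is not induced by a global map of the plane, so your conjugation argument does not apply there. The edge-product bookkeeping still does: $k_1$ and $k_2$ stay fixed while $k_3$ varies.
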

\begin{proof}
The transformation matrix $W = \begin{bmatrix} m & 0 \\ 0 & \tfrac{1}{m} \end{bmatrix}$ has $\det(W) = 1$, so it preserves area.

For the error, note that $F(mx, \tfrac{y}{m}) = mx \cdot \tfrac{y}{m} = xy = F(x,y)$. Since the deviations $d_i = \ell(v_i) - F(v_i)$ are defined at vertices and pE-motion preserves $F$, the same deviations apply to $T'$ if we define $\ell'$ appropriately.

Explicitly, by Lemma~\ref{lem:invariance_trans} we may assume $(x_1,y_1)=(0,0)$. By Lemma~\ref{lem:max_error}, the error along any edge depends only on the endpoint deviations $d_i, d_j$ and the edge product $k_{ij} = (x_j - x_i)(y_j - y_i)$: the critical parameter is $\lambda^* = \frac{d_j - d_i}{2k_{ij}} + \frac{1}{2}$ and the extremal error is $E^* = (1-\lambda^*)d_i + \lambda^* d_j + \lambda^*(1-\lambda^*)k_{ij}$.

Under simultaneous pE-motion $(x,y) \mapsto (mx, y/m)$ applied to all vertices, the edge product of \emph{every} edge is preserved:
\[
k'_{ij} = (mx_j - mx_i)\bigl(\tfrac{y_j}{m} - \tfrac{y_i}{m}\bigr) = (x_j - x_i)(y_j - y_i) = k_{ij}.
\]
Since both the deviations $d_i$ and all edge products $k_{ij}$ are unchanged, the formulas above show that $\lambda^*$ and $E^*$ are preserved on every edge---ascending and descending alike. Therefore the maximum approximation error over the entire triangle is preserved.
\end{proof}

\begin{corollary}[Normalization via pE-motion]
\label{cor:pe-normalization}
Let $T$ be a triangle with one vertex at the origin and the other two vertices $(x_2,y_2)$ and $(x_3,y_3)$ satisfying $x_2y_2 = x_3y_3 > 0$. Then there exists a pE-motion parameter $m > 0$ such that the transformed vertices satisfy $(x_2', y_2') = (y_3', x_3')$.
\end{corollary}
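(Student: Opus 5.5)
The plan is to solve explicitly for $m$. The pE-motion of Lemma~\ref{lem:pe-motion} sends $(x_i,y_i)\mapsto(mx_i, y_i/m)$. The required identity $(x_2',y_2')=(y_3',x_3')$ is therefore the pair of scalar equations
\[
m x_2 = \frac{y_3}{m}, \qquad \frac{y_2}{m} = m x_3 ,
\]
which is equivalent to $m^2 = y_3/x_2$ and $m^2 = y_2/x_3$. So I will show that these two prescriptions for $m^2$ agree and are positive, and then take $m$ to be the positive square root.

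First, consistency. The hypothesis $x_2y_2 = x_3y_3 > 0$ forces all four coordinates to be nonzero, so both quotients are defined. Cross-multiplying, $y_3/x_2 = y_2/x_3$ is equivalent to $x_3y_3 = x_2y_2$, which is exactly the hypothesis.

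Second, positivity. This is the only step needing care. From $x_iy_i>0$, each of $v_2$ and $v_3$ lies in the open first or open third quadrant. The quotient $y_3/x_2$ is positive precisely when $v_2$ and $v_3$ lie in the same one of these quadrants.

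Here I will invoke the normalization of Corollary~\ref{cor:normalized-triangle}, item 2, which is the setting in which the statement is used: the two non-origin vertices are taken in the closed first quadrant, hence in the open first quadrant since $x_iy_i>0$. If the opposite-quadrant case is to be covered as well, I would remark that no positive $m$ can then work, since $m^2$ would have to be negative. Such a configuration is excluded by that normalization rather than handled by a pE-motion.

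With $v_2,v_3$ in the first quadrant, set
\[
m := \sqrt{y_3/x_2} = \sqrt{y_2/x_3} > 0 .
\]
Both coordinate equations then hold: $mx_2 = \sqrt{x_2y_3} = y_3/m$, and $m x_3 = \sqrt{x_3 y_2} = y_2/m$. Finally, I would note via Lemma~\ref{lem:pe-motion} that the transformed triangle has the same area, deviations, and maximum error. The symmetric configuration is therefore a valid normalization: $v_2'$ and $v_3'$ are mirror images across $x=y$, and both still lie on the hyperbola $xy = x_2y_2$.
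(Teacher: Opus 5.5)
Your proposal is correct and follows the paper's proof. Both read off $m^2 = y_3/x_2$ and $m^2 = y_2/x_3$ from the two coordinate equations, use $x_2y_2 = x_3y_3$ for consistency, and take the positive square root. Your extra care about positivity fills a gap the paper's proof skips: as literally stated the corollary fails when $v_2$ and $v_3$ lie in opposite quadrants (e.g.\ $(1,2)$ and $(-2,-1)$, where $m^2 = -1$), so the same-quadrant normalization of Corollary~\ref{cor:normalized-triangle} is genuinely needed.
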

\begin{proof}
We need $mx_2 = \tfrac{y_3}{m}$ and $\tfrac{y_2}{m} = mx_3$. From the first equation, $m^2 = \tfrac{y_3}{x_2}$. Substituting into the second: $\tfrac{y_2 x_2}{y_3} = \tfrac{y_3 x_3}{x_2}$, which gives $x_2 y_2 = x_3 y_3$. This holds by assumption, so $m = \sqrt{\tfrac{y_3}{x_2}} = \sqrt{\tfrac{y_2}{x_3}}$ works.
\end{proof}

Geometrically, pE-motion slides vertices along hyperbolas $xy = c$ while preserving triangle area. Figure~\ref{fig:pe-motion} illustrates this.

\begin{remark}[Optimal vertices lie on hyperbolas]
\label{rem:hyperbola-vertices}
A geometric property of the optimal solution, which emerges from the analysis in Section~\ref{sec:elem-proof}, is that \emph{optimal vertices lie on hyperbolas} $xy = \text{constant}$. Specifically, for an optimal triangle with vertices $(0,0)$, $(x_2, y_2)$, and $(x_3, y_3)$, the non-origin vertices satisfy $x_2 y_2 = x_3 y_3 = k^*$, where $k^*$ is the optimal edge product. By pE-motion (Corollary~\ref{cor:pe-normalization}), we can then normalize to the symmetric form $(x_2, y_2) = (y_3, x_3)$, with both vertices on the same hyperbola branch in the first quadrant.

For the \emph{continuous case}, $k^* = \tfrac{8\varepsilon}{3}$, so vertices lie on $xy = \tfrac{8\varepsilon}{3}$. For the \emph{general case}, $k^* = \tfrac{32\varepsilon}{9}$, corresponding to $xy = \tfrac{32\varepsilon}{9}$. This provides geometric intuition: pE-motion slides vertices along their hyperbola while preserving area and error structure.
\end{remark}

\begin{remark}[Properties of pE-motion]
\label{rem:pe-properties}
When one vertex is at the origin, pE-motion has useful properties for understanding optimal triangles:
\begin{enumerate}
\item \emph{Single-vertex pE-motion}: Applying pE-motion to $v_2$ alone (keeping $v_3$ fixed) preserves both $k_1 = x_2 y_2$ and $k_2 = x_3 y_3$. Hence $E_1^*$ and $E_2^*$ are unchanged, while only $E_3^*$ varies. This provides a 1-parameter family affecting only the descending edge constraint.
\item \emph{Simultaneous pE-motion}: Applying the same pE-motion $(x,y) \mapsto (mx, y/m)$ to both $v_2$ and $v_3$ preserves all three edge products: $k_1$, $k_2$, and $k_3 = (x_3-x_2)(y_3-y_2) = m(x_3-x_2) \cdot \tfrac{1}{m}(y_3-y_2)$. Hence all $E_i^*$ are preserved. This is used in Corollary~\ref{cor:pe-normalization} to normalize without changing approximation quality.
\end{enumerate}
\end{remark}

\begin{figure}[h]
    \centering
\begin{tikzpicture}
\begin{axis}[
  axis lines=middle,
  axis line style={thick,black},
  xmin=-1.5, xmax=6.5,
  ymin=-1.2, ymax=5.5,
  width=14cm, height=9cm,
  grid=both,
  major grid style={draw=gray!55, line width=0.35pt},
  minor grid style={draw=gray!20, line width=0.20pt},
  minor tick num=4,
  ticklabel style={font=\small},
  clip=true,
]
\pgfmathsetmacro{\epsnum}{1}
\pgfmathsetmacro{\xA}{1}
\pgfmathsetmacro{\xB}{2+sqrt(3)}        %
\pgfmathsetmacro{\yA}{32*\epsnum/(9*\xA)}
\pgfmathsetmacro{\yB}{32*\epsnum/(9*\xB)}
\pgfmathsetmacro{\xOa}{1.3}
\pgfmathsetmacro{\xOb}{4.9}
\pgfmathsetmacro{\yOa}{32*\epsnum/(9*\xOa)}
\pgfmathsetmacro{\yOb}{32*\epsnum/(9*\xOb)}
\addplot[draw=none, fill=blue!25, fill opacity=0.25]
  coordinates {(0,0) (\xA,\yA) (\xB,\yB)} -- cycle;
\addplot[draw=none, fill=orange!30, fill opacity=0.25]
  coordinates {(0,0) (\xOa,\yOa) (\xOb,\yOb)} -- cycle;
\addplot[domain=0.6:6.5, samples=250, very thick, color=violet]
  {32*\epsnum/(9*x)};
\addplot[domain=0.6:6.5, samples=250, very thick, color=gray!70]
  {16*\epsnum/(3*x)};
\addplot[line width=1.2pt, color=blue!70!black]
  coordinates {(0,0) (\xA,\yA) (\xB,\yB)} -- cycle;
\addplot[line width=1.2pt, color=orange!85!black]
  coordinates {(0,0) (\xOa,\yOa) (\xOb,\yOb)} -- cycle;
\node[color=violet, font=\large] at (axis cs:2.25,4.15)
  {$f(x)=\dfrac{32\varepsilon}{9x}$};
\node[color=gray!70, font=\large] at (axis cs:3.95,2.45)
  {$g(x)=\dfrac{16\varepsilon}{3x}$};
\end{axis}
\end{tikzpicture}
    \caption{Illustration of pseudo-Euclidean motion: the transformation $(x,y) \mapsto (mx, y/m)$ preserves both area and the product $xy$, which determines the approximation error. 
    Triangles with vertices on the hyperbola $f(x) = \frac{32\varepsilon}{9x}$ have the same area regardless of their shape. For the violet triangle $(x_2,y_2)=(y_3,x_3)$ holds. The midpoints of both triangles also move along a shared hyperbola.}
    \label{fig:pe-motion}
\end{figure}
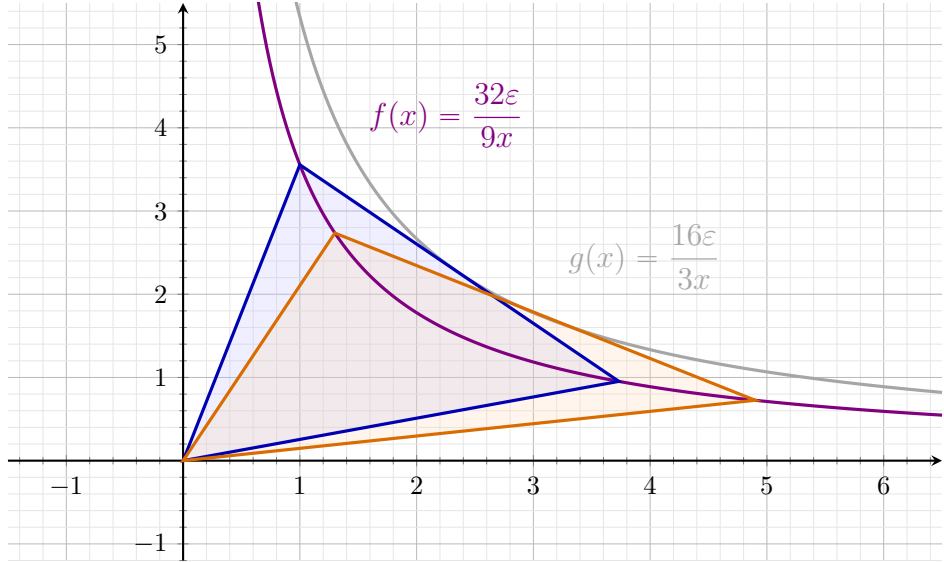

Combining the results of Sections~\ref{sec:reduction} and \ref{sec:error-analysis}, we have established that:
\begin{enumerate}
    \item Finding optimal triangulations reduces to finding a single triangle of maximum area (Corollary~\ref{cor:normalized-triangle}).
    \item We may assume the triangle has one vertex at the origin.
    \item The maximum error over the triangle is attained on an edge (Lemma~\ref{lem:max_error}\ref{lem:max_error:boundary}), with the error along each edge given by Lemma~\ref{lem:max_error}\ref{lem:max_error:formula}, equation~\eqref{eq:edge-error-lambda}.
\end{enumerate}
The pE-motion of Section~\ref{sec:pe-motion} provides an additional tool: once the optimal edge products are determined, Corollary~\ref{cor:pe-normalization} can normalize the triangle to the symmetric form $(x_2, y_2) = (y_3, x_3)$. However, this normalization is not needed for the optimization itself---the proof in the next section works directly with edge products and slack variables.

These reductions transform the infinite-dimensional problem of optimizing over all triangulations into a finite-dimensional one: the only remaining degrees of freedom are the vertex coordinates $(x_2, y_2, x_3, y_3)$ and the three deviations $d_1, d_2, d_3$. We solve this problem in the next section.

\section{Solving the optimal triangle problem}
\label{sec:solution}
We now formulate the optimal triangle problem as a nonlinear optimization problem and derive analytical solutions. The key to obtaining closed-form solutions is a reformulation via slack variables that transforms the problem into an optimization over box-constrained edge products. This is then solved exactly through separate convexity (reducing to box vertices), symmetry arguments, and an algebraic optimality certificate.

\paragraph{Section roadmap.} This section is organized as follows:
\begin{itemize}[nosep]
\item \textbf{Section~\ref{sec:opt-models}}: Formulates the optimization model using deviations and edge products.
\item \textbf{Section~\ref{sec:slack-reformulation}}: Reformulates via slack variables and edge-product bounds, reducing to an optimization over box-constrained edge products.
\item \textbf{Section~\ref{sec:elem-proof}}: Solves the optimization: (1) separate convexity reduces the edge-product variables to finitely many box vertices, (2) a dominance argument shows that the optimum occurs when all edge-product bounds are saturated (Type~A), corresponding to tight interior extremum constraints on each edge, (3) a strict inequality proves symmetry $a_2 = a_3$, and (4) an algebraic optimality certificate yields the closed-form solution.
\item \textbf{Section~\ref{sec:scaling}}: Applies the solution to all approximation types (general, continuous, under/over).
\end{itemize}

\subsection{Optimal triangles as an optimization problem}
\label{sec:opt-models}

We now formulate the optimization problem using the notation from Section~\ref{sec:error-analysis}: a triangle $T$ with vertices $v_1 = (x_1, y_1)$, $v_2 = (x_2, y_2)$, $v_3 = (x_3, y_3)$ and deviations $d_1, d_2, d_3$.
By Corollary~\ref{cor:normalized-triangle}, we may assume $v_1$ is at the origin, so $(x_1, y_1) = (0,0)$.

We label the three edges: $e_1$ connects the origin to $v_2$, $e_2$ connects the origin to $v_3$, and $e_3$ connects $v_2$ to $v_3$. Figure~\ref{fig:deviation-2d} illustrates the setup after normalization (described below).

\begin{figure}[ht]
    \centering
\begin{tikzpicture}[scale=2.0, 
    vertex/.style={circle, fill=blue!70!black, inner sep=2pt},
    edge label/.style={font=\small, fill=white, inner sep=1pt}]
\coordinate (V1) at (0, 0);
\coordinate (V2) at (2.3, 0.9);
\coordinate (V3) at (0.9, 2.3);
\pgfmathsetmacro{\k}{2.07}
\draw[domain=0.7:3, samples=100, thick, violet!60, dashed] 
    plot (\x, {\k/\x}) node[right, font=\footnotesize] {$xy = k$};
\fill[orange!20, opacity=0.6] (V1) -- (V2) -- (V3) -- cycle;
\draw[very thick, blue!60!black] (V1) -- (V2) node[edge label, midway, below right] {$e_1$};
\draw[very thick, blue!60!black] (V1) -- (V3) node[edge label, midway, above left] {$e_2$};
\draw[very thick, green!50!black] (V2) -- (V3) node[edge label, midway, above right] {$e_3$};
\node[vertex] at (V1) {};
\node[vertex] at (V2) {};
\node[vertex] at (V3) {};
\node[below left, font=\small] at (V1) {$v_1 = (0, 0)$};
\node[right, font=\small] at ($(V2)+(0.1,0)$) {$v_2 = (x_2, y_2)$};
\node[above, font=\small] at ($(V3)+(0,0.1)$) {$v_3 = (x_3, y_3)$};
\draw[-Stealth, thick] (-0.3, 0) -- (3.0, 0) node[right] {$x$};
\draw[-Stealth, thick] (0, -0.3) -- (0, 3.0) node[above] {$y$};
\node[align=left, font=\footnotesize, anchor=north west] at (2.2, 2.6) {
    \textcolor{blue!60!black}{\rule{6pt}{2pt}} Ascending\\[1pt]
    \textcolor{green!50!black}{\rule{6pt}{2pt}} Descending
};
\draw[<-, thin, violet!60] (V2) -- ++(0.25, 0.25) node[right, font=\tiny, violet!70!black] {$x_2 y_2 = k$};
\draw[<-, thin, violet!60] (V3) -- ++(-0.25, 0.5) node[left, font=\tiny, violet!70!black] {$x_3 y_3 = k$};
\end{tikzpicture}
    \caption{An optimal triangle (after normalization): $v_1$ at the origin, $v_2$ and $v_3$ in the first quadrant with equal edge products $x_2 y_2 = x_3 y_3 = k$, so both lie on the hyperbola $xy = k$. Edges $e_1$ and $e_2$ (blue) are ascending ($k > 0$), while $e_3$ (green) is descending. The optimization in Section~\ref{sec:elem-proof} works with edge products directly and does not assume this geometric normalization.}
    \label{fig:deviation-2d}
\end{figure}
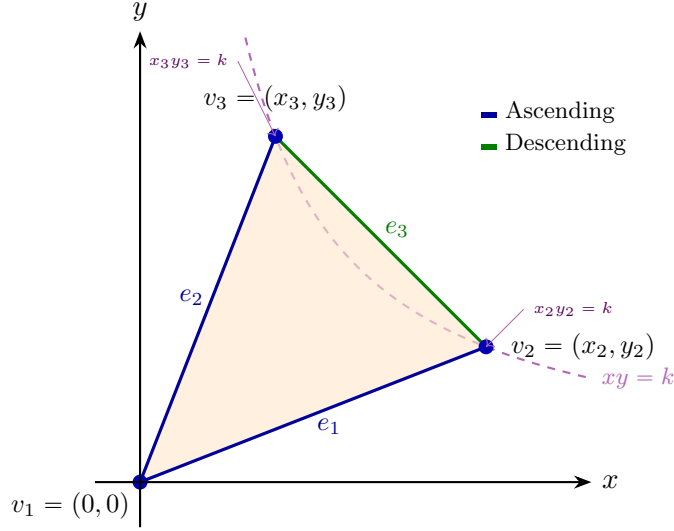

Let $\varepsilon$ be a bound on the maximum approximation error.
By Lemma~\ref{lem:max_error}\ref{lem:max_error:boundary}, the maximum error over $T$ is attained on an edge.
For the optimization, we bound the extremal error $E^*$ on each edge.
Using Lemma~\ref{lem:max_error}\ref{lem:max_error:extremum}, equation~\eqref{eq:edge-error-expanded}, with $(x_1,y_1) = (0,0)$, the edge product for edge $e_1$ from the origin to $(x_2,y_2)$ is simply $k_1 = x_2y_2$, and the extremal error becomes:
\[
E_1^* = \frac{d_1^2 - 2d_1(d_2 - x_2y_2) + (d_2 + x_2y_2)^2}{4x_2y_2}.
\]
Similar expressions hold for edges $e_2$ and $e_3$.

Finally, the objective is to maximize the area of the triangle $T$:
$$A(T)=\frac{1}{2}
\abs{
\det 
\begin{pmatrix}
x_3 & x_2\\
y_3 & y_2 
\end{pmatrix}}
=\tfrac{1}{2}\abs{x_3y_2-x_2y_3}.$$

Combining the error constraints from Lemma~\ref{lem:max_error}\ref{lem:max_error:extremum} with the area objective gives the following optimization model:
\begin{subequations}
\label{model:optimal-triangle}
\begin{align}
    \text{Maximize:}\quad & \tfrac{1}{2}(x_3y_2-x_2y_3) \label{model:obj}\\
    \text{subject to:}\quad  E_{\min} &\leq \frac{d_1^2 - 2 d_1\left(d_2 - x_2 y_2\right) + \left(d_2 + x_2 y_2\right)^2}{4 x_2 y_2} \leq E_{\max}, \label{model:edge1}\\
    E_{\min} &\leq \frac{d_1^2 - 2 d_1\left(d_3 - x_3 y_3\right) + \left(d_3 + x_3 y_3\right)^2}{4 x_3 y_3} \leq E_{\max}, \label{model:edge2}\\
    E_{\min} &\leq \frac{d_2^2 - 2 d_2\left(d_3 - \left(x_3 - x_2\right)\left(y_3 - y_2\right)\right) + \left(d_3 + \left(x_3 - x_2\right)\left(y_3 - y_2\right)\right)^2}{4\left(x_3 - x_2\right)\left(y_3 - y_2\right)} \leq E_{\max}, \label{model:edge3}\\
    & \quad x_2,y_2,x_3,y_3 \in \R, \quad d_1,d_2,d_3 \in [E_{\min}, E_{\max}]. \label{model:bounds}
\end{align}
\end{subequations}
Constraints \eqref{model:edge1}--\eqref{model:edge3} bound the extremal error $E^*$ on each edge using Lemma~\ref{lem:max_error}\ref{lem:max_error:extremum}, equation~\eqref{eq:edge-error-expanded}. Note that the vertex errors are automatically bounded since $E(0) = d_1$ and $E(1) = d_2$ by Lemma~\ref{lem:max_error}\ref{lem:max_error:formula}, equation~\eqref{eq:edge-error-lambda}, and the deviations satisfy $d_i \in [E_{\min}, E_{\max}]$.

The different approximation types correspond to different parameter choices:
\begin{center}
\begin{tabular}{lcc}
\toprule
Approximation Type & $E_{\min}$ & $E_{\max}$ \\
\midrule
General & $-\varepsilon$ & $\varepsilon$ \\
Underestimation & $-\varepsilon$ & $0$ \\
Overestimation & $0$ & $\varepsilon$ \\
\bottomrule
\end{tabular}
\end{center}

These optimization models can be solved numerically using nonlinear solvers such as Gurobi or BARON, which employ branch-and-bound and convexification techniques. Alternatively, exact solutions can be obtained using computer algebra systems such as Mathematica via cylindrical algebraic decomposition. The latter approach becomes tractable after the problem reformulation in the following subsections. We have verified our analytical results using both approaches: numerical optimization confirms the optimal values to high precision, and symbolic computation in Mathematica confirms the closed-form solutions exactly.

\textbf{Continuous approximations (parallelogram tilings).} For continuous approximations based on parallelogram tilings (as constructed in Lemma~\ref{lem:area_density}), the deviations must be equal at all vertices: add constraint $d_1 = d_2 = d_3$ (see Remark~\ref{rem:continuous-constraint} for the derivation of this constraint).

\textbf{Interpolation.} For interpolation, the approximation must match $F$ at vertices: add constraint $d_1 = d_2 = d_3 = 0$.

\begin{remark}[Normalization to $\varepsilon = 1$]
\label{rem:normalization}
The optimization models can be normalized by setting $\varepsilon = 1$. The solution for general $\varepsilon$ is recovered via scaling: coordinates scale as $(x, y) = \sqrt{\varepsilon} \cdot (\bar{x}, \bar{y})$ and deviations as $d_i = \varepsilon \cdot \bar{d}_i$, where barred quantities denote the normalized solution. Area then scales as $A = \varepsilon \cdot \bar{A}$ and density as $\delta = \bar{\delta}/\varepsilon$.\footnote{Code to verify these models is available at \url{https://github.com/RobertHildebrand/OptimalTriangulationsForQuadratics}.}
\end{remark}

\subsection{Reformulation via slack variables and edge-product bounds}
\label{sec:slack-reformulation}

We now reformulate model~\eqref{model:optimal-triangle} in terms of edge products and slack variables, which eliminates the vertex coordinates entirely and reveals the structure needed for a rigorous solution.

\subsubsection{Symmetric bounds and sign reduction}

Let $\sigma := E_{\max} - E_{\min}$ denote the width of the error interval. For general approximations, $\sigma = 2\varepsilon$; for under/overestimation, $\sigma = \varepsilon$.
The following lemma shows that we may center the error interval at zero, which simplifies the subsequent analysis by making the bounds symmetric.
\begin{lemma}[Symmetric bounds]
\label{lem:symmetric-bounds}
Without loss of generality, $E_{\min} = -\sigma/2$ and $E_{\max} = \sigma/2$.
\end{lemma}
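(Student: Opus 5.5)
The idea is to shift all deviations by a common constant and absorb the shift into the linear approximation. Let $c := (E_{\min}+E_{\max})/2$ be the midpoint of the admissible error interval. Given any feasible point $(x_2,y_2,x_3,y_3,d_1,d_2,d_3)$ of model~\eqref{model:optimal-triangle} with bounds $[E_{\min},E_{\max}]$, define the shifted deviations $\tilde d_i := d_i - c$. Each $\tilde d_i$ lies in $[-\sigma/2,\sigma/2]$. Conversely, adding $c$ to a feasible point with symmetric bounds recovers one with the original bounds. The vertex coordinates, and hence the area objective, are left unchanged, so the two models have the same optimal area.

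Feasibility of the edge constraints follows from the edge formula of Lemma~\ref{lem:max_error}\ref{lem:max_error:formula}. On each edge with endpoint deviations $d_a,d_b$ and product $k$, we have $E(\lambda) = (1-\lambda)d_a + \lambda d_b + \lambda(1-\lambda)k$. Replacing $d_a,d_b$ by $d_a-c,d_b-c$ gives $E(\lambda)-c$, because $(1-\lambda)+\lambda = 1$. Therefore the whole error profile on every edge, including the interior extremum $E^*$ from \eqref{eq:edge-error-star}, is translated by exactly $-c$. By Lemma~\ref{lem:max_error}\ref{lem:max_error:boundary}, the maximum error over $T$ is attained on an edge, so the condition $E_{\min}\le E\le E_{\max}$ on $T$ is equivalent to $-\sigma/2\le E - c\le \sigma/2$ on $T$. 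The constraints \eqref{model:edge1}--\eqref{model:edge3} and \eqref{model:bounds} therefore transform into their symmetric counterparts. At the level of approximating functions, the same shift is achieved by replacing $\ell$ with $\ell - c$, which is still linear, so the reformulation is legitimate.

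There is one point to check, and it is the main (mild) obstacle. Constraints \eqref{model:edge1}--\eqref{model:edge3} use the closed-form $E^*$, which is only the true extremum when $\lambda^*\in(0,1)$ and $k\neq 0$. I would verify that $\lambda^* = \frac{d_b-d_a}{2k}+\frac12$ depends only on the difference $d_b-d_a$ and is therefore invariant under the shift. This shows that the case structure, interior versus endpoint extremum, is preserved as well. Together with the identity $E^*(k,d_a-c,d_b-c) = E^*(k,d_a,d_b)-c$, which also follows directly from \eqref{eq:edge-error-star}, this completes the argument.

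Finally, I would note how the lemma applies to the three cases. For general approximation, $c=0$ and nothing changes. For over- and underestimation, the lemma reduces the problem to the general problem with $\varepsilon$ replaced by $\sigma/2 = \varepsilon/2$. The optimal values are afterwards recovered by shifting the deviations back by $c$.
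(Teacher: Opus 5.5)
Your proposal is correct and follows essentially the same argument as the paper. You shift by the midpoint of the error interval and replace $\ell$ by $\ell - c$; the error profile on each edge, and hence $E^*$, then translates by that constant, while the vertex coordinates, edge products and area are unchanged. Your extra check that $\lambda^*$ depends only on $d_b - d_a$, so the interior-versus-endpoint case structure is preserved, is a small care the paper leaves implicit, but the route is the same.
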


\begin{proof}
Let $\bar{E} := (E_{\min} + E_{\max})/2$ be the midpoint of the error interval, and consider the shifted linear function $\ell' := \ell - \bar{E}$, which has deviations $d_i' := d_i - \bar{E}$.

We verify that this shift transforms model~\eqref{model:optimal-triangle} with bounds $[E_{\min}, E_{\max}]$ into the same model with the symmetric bounds $[-\sigma/2, \sigma/2]$, without changing the area or feasibility.

\emph{Box constraints.} The constraint $d_i \in [E_{\min}, E_{\max}]$ becomes $d_i' \in [E_{\min} - \bar{E},\, E_{\max} - \bar{E}] = [-\sigma/2,\, \sigma/2]$.

\emph{Edge constraints.} The error along any edge (Lemma~\ref{lem:max_error}\ref{lem:max_error:formula}) is $E(\lambda) = (1-\lambda)d_i + \lambda d_j + \lambda(1-\lambda)k$. Replacing $d_i, d_j$ by $d_i' + \bar{E}, d_j' + \bar{E}$:
\[
E(\lambda) = (1-\lambda)(d_i' + \bar{E}) + \lambda(d_j' + \bar{E}) + \lambda(1-\lambda)k = E'(\lambda) + \bar{E},
\]
where $E'(\lambda) := (1-\lambda)d_i' + \lambda d_j' + \lambda(1-\lambda)k$ is the error function for the shifted deviations. In particular, the extremal error transforms as $E^*(k, d_i, d_j) = E^*(k, d_i', d_j') + \bar{E}$. Therefore, the edge constraints $E_{\min} \leq E^* \leq E_{\max}$ in model~\eqref{model:optimal-triangle} become $-\sigma/2 \leq E^*(k, d_i', d_j') \leq \sigma/2$.

\emph{Objective and geometry.} Since $\ell' = \ell - \bar{E}$ differs from $\ell$ by a constant, the vertex positions $(x_2, y_2, x_3, y_3)$, the edge products $k_i = \Delta x \cdot \Delta y$, and the triangle area $A = \tfrac{1}{2}|x_3 y_2 - x_2 y_3|$ are all unchanged.

Thus the shifted problem has the same feasible triangles and the same objective, with bounds centered at zero.
\end{proof}

The following lemma allows us to fix the signs of two edge products. Recall from Section~\ref{sec:error-analysis} that for a triangle with one vertex at the origin and the other two at $v_2 = (x_2, y_2)$ and $v_3 = (x_3, y_3)$, the three edge products are
\[
\begin{aligned}
k_1 &:= x_2 y_2 
&&\quad \text{(edge $e_1$: origin to $v_2$)} \\
k_2 &:= x_3 y_3 
&&\quad \text{(edge $e_2$: origin to $v_3$)} \\
k_3 &:= (x_3 - x_2)(y_3 - y_2) 
&&\quad \text{(edge $e_3$: $v_2$ to $v_3$)}
\end{aligned}
\]
These satisfy the identity $k_1 + k_2 - k_3 = x_3 y_2 + x_2 y_3$ (see Lemma~\ref{lem:equivalence}(i) below). Since each edge product $k = \Delta x \cdot \Delta y$ depends only on the displacement vector along the edge, translating the triangle (Lemma~\ref{lem:invariance_trans}) preserves all three edge products. Relabeling which vertex is at the origin permutes the $k_i$ but does not change the multiset $\{k_1, k_2, k_3\}$.

\begin{lemma}[Sign reduction]
\label{lem:sign-reduction}
Without loss of generality, $k_1, k_2 \geq 0$.
\end{lemma}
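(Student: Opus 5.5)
Our plan is a pigeonhole argument on the signs of the three edge products, followed by a relabeling of vertices and, if needed, an axis reflection (Proposition~\ref{prop:reflection}) that flips all signs. The key observation is that the three edge vectors $u_1 = v_2 - v_1$, $u_2 = v_3 - v_1$, $u_3 = v_3 - v_2$ satisfy $u_3 = u_2 - u_1$, and each edge product is $k = F(u)$ with $F(x,y)=xy$. Reversing the orientation of an edge does not change its product, since $F(-u)=F(u)$. Hence the multiset $\{k_1,k_2,k_3\}$ is intrinsic to the triangle. Relabeling which vertex is placed at the origin (using Lemma~\ref{lem:invariance_trans} to translate it there) lets any two of the three edge products play the roles of $k_1,k_2$, namely the two edges incident to the chosen origin vertex.

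\textbf{Step 1: pigeonhole.} Among the three real numbers $k_1,k_2,k_3$, at least two have the same weak sign: either at least two are $\geq 0$ or at least two are $\leq 0$. In the first case we pick the common vertex of those two edges as $v_1$ and translate it to the origin. The two edges incident to $v_1$ are then $e_1,e_2$, their products are $k_1,k_2 \geq 0$, and we are done. Any two edges of a triangle share exactly one vertex, so this choice is always possible.

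\textbf{Step 2: the nonpositive case.} Suppose instead that at least two products are $\leq 0$. We apply the reflection $(x,y)\mapsto(x,-y)$ of Proposition~\ref{prop:reflection}. It maps each edge vector $(\Delta x,\Delta y)$ to $(\Delta x,-\Delta y)$, so every edge product changes sign. It preserves the area, and it carries a linear $\varepsilon$-approximation with deviations $d_i$ to one with deviations $-d_i$ and the same maximum error. We then return to Step 1.

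We must also check that the admissible error interval is preserved. This is exactly why Lemma~\ref{lem:symmetric-bounds} is invoked first: with $E_{\min}=-\sigma/2$ and $E_{\max}=\sigma/2$, the map $d_i\mapsto -d_i$ keeps each deviation in $[-\sigma/2,\sigma/2]$. The edge extremal errors satisfy $E^*(-k,-d_a,-d_b) = -E^*(k,d_a,d_b)$ by \eqref{eq:edge-error-lambda}, so the constraints \eqref{model:edge1}--\eqref{model:edge3} remain satisfied. The objective also stays nonnegative after the reflection, because area is an absolute value.

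\textbf{Main obstacle.} The only delicate point is the under/overestimation case, where the reflection swaps $f\geq F$ with $f\leq F$. This is harmless only because the symmetric recentering has already been applied, and the model depends only on the width $\sigma$. I would therefore state explicitly that after Lemma~\ref{lem:symmetric-bounds}, both cases share the same feasible set of triangles, so the sign flip preserves feasibility and area.
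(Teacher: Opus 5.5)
Your proposal is correct and follows essentially the same argument as the paper. With symmetric bounds in place, if at most one edge product is nonnegative you reflect $y\mapsto -y$ via $\ell'(x,y)=-\ell(x,-y)$, which negates all edge products and deviations, and then you translate and relabel so the two nonnegative products lie on the edges from the origin. Your explicit check that $E^*(-k,-d_a,-d_b)=-E^*(k,d_a,d_b)$ keeps the symmetric edge constraints satisfied is a detail the paper leaves implicit.
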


\begin{proof}
With symmetric bounds, the function $F(x,-y) = -F(x,y)$ implies that reflecting $y \mapsto -y$ and defining $\ell'(x,y) := -\ell(x,-y)$ gives a feasible solution with deviations $-d_i$ (still in $[-\sigma/2, \sigma/2]$) and all edge products negated. This reflection, combined with vertex relabeling (Lemma~\ref{lem:invariance_trans}) to place the vertex opposite the negative-product edge at the origin, ensures $k_1, k_2 \geq 0$.

More precisely: if at most one edge product is non-negative, apply the reflection to negate all three; now at least two are non-negative. Then relabel so that the two non-negative products correspond to the edges from the origin.
\end{proof}

\subsubsection{Slack variables and edge-product bounds}

Define the \emph{slack variables}
\begin{equation}
\label{eq:slack-def}
a_i := \sqrt{E_{\max} - d_i}, \qquad b_i := \sqrt{d_i - E_{\min}}, \qquad i = 1,2,3,
\end{equation}
satisfying $a_i^2 + b_i^2 = \sigma$ for each~$i$ and $a_i, b_i \in [0, \sqrt{\sigma}]$. Intuitively, $a_i$ measures how far the deviation $d_i$ is from the upper error cap $E_{\max}$, while $b_i$ measures its distance from the lower cap $E_{\min}$.

\begin{lemma}[Edge-product bounds]
\label{lem:edge-bounds}
Consider an edge with endpoint deviations $d_i, d_j \in [E_{\min}, E_{\max}]$ and edge product $k := \Delta x \cdot \Delta y$, where $(\Delta x, \Delta y)$ is the displacement vector along the edge (see Section~\ref{sec:error-analysis}). The error along the edge is $E(\lambda) = (1-\lambda)d_i + \lambda d_j + \lambda(1-\lambda)k$ by Lemma~\ref{lem:max_error}\ref{lem:max_error:formula}. Then the constraint $E_{\min} \leq E(\lambda) \leq E_{\max}$ for all $\lambda \in [0,1]$ is equivalent to:
\begin{enumerate}
\item[\emph{(a)}] If $k > 0$: \; $k \leq (a_i + a_j)^2$, \; where $a_i = \sqrt{E_{\max} - d_i}$.
\item[\emph{(b)}] If $k < 0$ (write $w = -k$): \; $w \leq (b_i + b_j)^2$, \; where $b_i = \sqrt{d_i - E_{\min}}$.
\item[\emph{(c)}] If $k = 0$: the constraint holds for all $d_i, d_j \in [E_{\min}, E_{\max}]$ with no further restriction on the geometry, since $E(\lambda) = (1-\lambda)d_i + \lambda d_j$ is a convex combination of $d_i$ and $d_j$, hence lies in $[E_{\min}, E_{\max}]$.
\end{enumerate}
\end{lemma}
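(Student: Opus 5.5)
The plan is to treat the upper and lower constraints separately along the edge and use the quadratic structure of $E(\lambda)$ from Lemma~\ref{lem:max_error}\ref{lem:max_error:formula}. Case (c) is immediate, as stated. For (a) with $k>0$, $E$ is concave in $\lambda$. The lower constraint $E\ge E_{\min}$ is then automatic, because a concave function on $[0,1]$ attains its minimum at an endpoint, where $E(0)=d_i$, $E(1)=d_j\ge E_{\min}$. Case (b) follows from (a) by the symmetry $E\mapsto -E$: replace $d\mapsto -d$, $k\mapsto -k$, and swap $E_{\min}\leftrightarrow -E_{\max}$, so that the roles of $a$ and $b$ interchange. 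This reflection is the one used in Lemma~\ref{lem:sign-reduction}. So the real work is proving that, for $k>0$, $\max_{\lambda\in[0,1]}E(\lambda)\le E_{\max}$ if and only if $k\le(a_i+a_j)^2$.

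To do this, I rewrite the upper constraint in slack form. Setting $G(\lambda):=E_{\max}-E(\lambda)$ and using $E_{\max}-d_i=a_i^2$, we get
\[
G(\lambda)=(1-\lambda)a_i^2+\lambda a_j^2-\lambda(1-\lambda)k ,
\]
and we need $G\ge 0$ on $[0,1]$. The key algebraic identity will be
\[
(1-\lambda)a_i^2+\lambda a_j^2-\lambda(1-\lambda)(a_i+a_j)^2=\bigl((1-\lambda)a_i-\lambda a_j\bigr)^2 ,
\]
which I check by expanding: the coefficient of $a_i^2$ is $(1-\lambda)-\lambda(1-\lambda)=(1-\lambda)^2$, that of $a_j^2$ is $\lambda^2$, and the cross term is $-2\lambda(1-\lambda)a_ia_j$. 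Hence
\[
G(\lambda)=\bigl((1-\lambda)a_i-\lambda a_j\bigr)^2+\lambda(1-\lambda)\bigl((a_i+a_j)^2-k\bigr).
\]

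Sufficiency is now immediate: if $k\le(a_i+a_j)^2$, both terms are nonnegative on $[0,1]$. For necessity, if $a_i+a_j>0$, I evaluate at $\lambda_0=a_i/(a_i+a_j)\in[0,1]$, where the square vanishes. This gives $G(\lambda_0)=\lambda_0(1-\lambda_0)\bigl((a_i+a_j)^2-k\bigr)$, so $k>(a_i+a_j)^2$ forces $G(\lambda_0)<0$ whenever $\lambda_0\in(0,1)$, that is, whenever $a_i,a_j>0$. The degenerate boundary cases need separate care and are the main (minor) obstacle.
\begin{itemize}
\item If $a_i=0$ and $a_j>0$, then $G'(0)=a_j^2-k$, so $k>a_j^2=(a_i+a_j)^2$ makes $G$ negative just after $0$. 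The case $a_j=0$, $a_i>0$ is symmetric.
\item If $a_i=a_j=0$, then $G=-\lambda(1-\lambda)k<0$ in the interior for any $k>0$, consistent with the bound $k\le 0$.
\end{itemize}
This also shows that the bound is exactly the condition that the interior extremum $\lambda^*$ of \eqref{eq:lambda-star}, when it lies in $(0,1)$, satisfies $E^*\le E_{\max}$, which ties the result back to \eqref{eq:edge-error-star}.
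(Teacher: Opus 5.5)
Your proof is correct, but it takes a different route from the paper's.

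\emph{The paper's route.} It substitutes the critical point $\lambda^*$ from \eqref{eq:lambda-star} into $E^*\le E_{\max}$ and clears the denominator. This gives $k^2-2(a_i^2+a_j^2)k+(a_i^2-a_j^2)^2\le 0$, whose roots are $(a_i\pm a_j)^2$, and the paper keeps the larger root.

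\emph{Your route.} You write $E_{\max}-E(\lambda)$ as $\bigl((1-\lambda)a_i-\lambda a_j\bigr)^2+\lambda(1-\lambda)\bigl((a_i+a_j)^2-k\bigr)$. This yields sufficiency for all $\lambda\in[0,1]$ simultaneously. Necessity comes from the explicit witness $\lambda_0=a_i/(a_i+a_j)$, which is exactly $\lambda^*$ at the threshold $k=(a_i+a_j)^2$.

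\emph{What your identity buys.} It covers points the paper passes over. The paper never checks whether $\lambda^*\in(0,1)$. Its remark that ``since $k>0$, the binding root is the larger one'' does not by itself handle $0<k<(a_i-a_j)^2$, where the quadratic inequality actually fails. That case needs the extra observation that $(a_i-a_j)^2\le|a_i^2-a_j^2|$. Then $k<|a_i^2-a_j^2|=|d_j-d_i|$, so $\lambda^*\notin(0,1)$ and the maximum of the concave $E$ lies at an endpoint. Your decomposition makes this case analysis unnecessary. The cost is treating $a_ia_j=0$ separately, which you do correctly via $G'(0)=a_j^2-k$.

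\emph{What the paper's version buys.} It is shorter and ties the bound directly to the closed-form extremal error \eqref{eq:edge-error-star} used elsewhere.

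\emph{Part (b).} Deriving (b) from (a) via $(d,k,E_{\min},E_{\max})\mapsto(-d,-k,-E_{\max},-E_{\min})$ is cleaner than the paper's ``analogous quadratic analysis.'' It is a purely algebraic symmetry of the formula for $E(\lambda)$. It needs neither the geometric reflection nor the symmetric bounds used in Lemma~\ref{lem:sign-reduction}.
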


\begin{proof}
Since $E(0) = d_i$ and $E(1) = d_j$ are both in $[E_{\min}, E_{\max}]$, only interior extrema need checking.

\emph{Part~(c):} When $k = 0$, $E(\lambda)$ is linear (affine) in $\lambda$, so it has no interior extremum. Its values on $[0,1]$ lie between $E(0) = d_i$ and $E(1) = d_j$, both of which are in $[E_{\min}, E_{\max}]$.

\emph{Part~(a):} When $k > 0$, $E(\lambda)$ is concave in $\lambda$ (the coefficient of $\lambda^2$ is $-k < 0$), so $\min_\lambda E = \min(d_i, d_j) \geq E_{\min}$ automatically. The upper bound $\max E \leq E_{\max}$ requires controlling the interior maximum. Substituting $\lambda^*$ from~\eqref{eq:lambda-star} into $E^* \leq E_{\max}$ and simplifying yields the quadratic inequality $k^2 + 2(d_i + d_j - 2E_{\max})k + (d_i - d_j)^2 \leq 0$ in $k$. The roots of the left-hand side are $(\sqrt{E_{\max} - d_i} \pm \sqrt{E_{\max} - d_j})^2 = (a_i \pm a_j)^2$. Since $k > 0$, the binding root is the larger one: $k \leq (a_i + a_j)^2$.

\emph{Part~(b):} When $k < 0$, $E(\lambda)$ is convex (the coefficient of $\lambda^2$ is $-k > 0$), so $\max_\lambda E = \max(d_i, d_j) \leq E_{\max}$ automatically. The lower bound $\min E \geq E_{\min}$ yields, by an analogous quadratic analysis, $-k \leq (b_i + b_j)^2$, i.e., $w \leq (b_i + b_j)^2$.
\end{proof}

\subsubsection{Reduced formulation and equivalence}

We now combine the edge-product bounds with an area identity to reformulate model~\eqref{model:optimal-triangle} as an optimization over $(a_i, k_i)$.

Recall that $b_i = \sqrt{d_i - E_{\min}} = \sqrt{\sigma - a_i^2}$ is determined by $a_i$, so in what follows the only free deviation variables are $a_1, a_2, a_3 \in [0, \sqrt{\sigma}]$.

Applying Lemma~\ref{lem:edge-bounds} to each of the three edges (using $k_1, k_2 \geq 0$ from Lemma~\ref{lem:sign-reduction}), the nonconvex edge-error constraints decouple into independent interval constraints for each $k_i$. This is the key structural step: it replaces the coupled quadratic constraints of model~\eqref{model:optimal-triangle} with simple box constraints. Define the \emph{box endpoints}:
\begin{equation}
\label{eq:box-endpoints}
P_{12} := (a_1 + a_2)^2, \quad P_{13} := (a_1 + a_3)^2, \quad Q := (b_2 + b_3)^2, \quad P_{23} := (a_2 + a_3)^2,
\end{equation}
where $b_i = \sqrt{\sigma - a_i^2}$. The $P_{ij}$ are upper bounds from the $a$-slacks (distance to $E_{\max}$), controlling the ascending edges; $Q$ is the upper bound from the $b$-slacks (distance to $E_{\min}$), controlling the descending edge. The feasible ranges for the edge products are:
\begin{equation}
\label{eq:box}
k_1 \in [0, P_{12}], \qquad k_2 \in [0, P_{13}], \qquad k_3 \in [-Q, P_{23}].
\end{equation}
The lower bound $0$ for $k_1, k_2$ comes from the sign reduction; the upper bounds from Lemma~\ref{lem:edge-bounds}(a). For $k_3$, the upper bound $P_{23}$ applies when $k_3 > 0$ (part~(a)), and the lower bound $-Q$ applies when $k_3 < 0$ (part~(b), with $w = -k_3$). When $k_3 = 0$, the constraint is vacuous (part~(c)).

\begin{lemma}[Equivalence of the reduced formulation]
\label{lem:equivalence}
\leavevmode
\begin{enumerate}
\item[\emph{(i)}] \textbf{Area identity.} For any triangle with $v_1 = (0,0)$ and edge products $k_1, k_2, k_3$:
\begin{equation}
\label{eq:area-identity}
4A^2 = (k_1 + k_2 - k_3)^2 - 4k_1 k_2.
\end{equation}
\item[\emph{(ii)}] \textbf{Realizability.} For any $k_1, k_2 > 0$ and $k_3 \in \R$ with $(k_1 + k_2 - k_3)^2 - 4k_1 k_2 > 0$, there exists a non-degenerate triangle $v_1 = (0,0), v_2, v_3$ with the prescribed edge products.
\item[\emph{(iii)}] \textbf{Equivalence.} The following \emph{reduced problem} has the same optimal value as model~\eqref{model:optimal-triangle}:
\begin{equation}
\label{eq:reduced}
\max_{\substack{a_i \in [0,\sqrt{\sigma}]\\[1pt] k_i \in \text{box}~\eqref{eq:box}}}
\; 4A^2 = (k_1 + k_2 - k_3)^2 - 4k_1 k_2.
\end{equation}
Moreover, any optimizer of~\eqref{eq:reduced} with $k_1, k_2 > 0$ can be realized as a triangle that is optimal for model~\eqref{model:optimal-triangle}.
\end{enumerate}
\end{lemma}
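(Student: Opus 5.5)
We prove the three parts in order; (i) and (ii) are algebraic facts about triangles with a vertex at the origin, and (iii) combines them with Lemmas~\ref{lem:symmetric-bounds}, \ref{lem:sign-reduction} and~\ref{lem:edge-bounds}.

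\textbf{Area identity.} For (i), expand $k_3 = (x_3-x_2)(y_3-y_2) = k_2 + k_1 - (x_3y_2 + x_2y_3)$. This gives $k_1+k_2-k_3 = x_3y_2 + x_2y_3$. Then
\[
(x_3y_2+x_2y_3)^2 - 4x_2y_2x_3y_3 = (x_3y_2 - x_2y_3)^2 = 4A^2 .
\]

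\textbf{Realizability.} For (ii), set $s := k_1+k_2-k_3$ and $D := s^2-4k_1k_2>0$. We need $u := x_3y_2$ and $v := x_2y_3$ with $u+v=s$ and $uv = k_1k_2$. These are the two roots of $t^2 - st + k_1k_2$, which are real and distinct because $D>0$. Since $k_1k_2>0$, both roots have the same sign and are nonzero.

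Choose $x_2 = 1$ and $y_2 = k_1$. Then $v = y_3$ fixes $y_3$, and $u = x_3 k_1$ fixes $x_3 = u/k_1$. Check: $x_3y_3 = uv/k_1 = k_2$, and $k_3$ is then forced by the identity in (i). Finally, $|u-v| = \sqrt{D}>0$, so the triangle is non-degenerate.

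\textbf{Equivalence, forward direction.} For (iii), start with a feasible point of model~\eqref{model:optimal-triangle}.
\begin{enumerate}
\item By Lemma~\ref{lem:symmetric-bounds}, assume the bounds are symmetric.
\item By Lemma~\ref{lem:sign-reduction}, assume $k_1,k_2\ge0$. Neither lemma changes the area.
\item Define the slacks $a_i$ from the deviations.
\item Lemma~\ref{lem:edge-bounds} shows each edge constraint is equivalent to the corresponding box condition~\eqref{eq:box}. For $k_3$ this uses whichever of cases (a)--(c) matches its sign.
\item By (i), the objective $4A^2$ equals the reduced objective.
\end{enumerate}
Hence the value of~\eqref{eq:reduced} is at least the square of the value of~\eqref{model:optimal-triangle} (up to the factor $4$).

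One point needs care. Model~\eqref{model:optimal-triangle} as written uses the interior formula $E^*$, which is only valid when $\lambda^*\in(0,1)$, and it divides by $k$. I would read its constraints as "$E(\lambda)\in[E_{\min},E_{\max}]$ for all $\lambda$". Lemma~\ref{lem:edge-bounds} characterizes exactly that condition, and by Lemma~\ref{lem:max_error}\ref{lem:max_error:boundary} it controls the whole triangle.

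\textbf{Equivalence, converse direction.} Take a reduced feasible point with $k_1,k_2>0$ and positive objective.
\begin{enumerate}
\item Set $d_i = E_{\max} - a_i^2$. This lies in $[E_{\min},E_{\max}]$ because $a_i\in[0,\sqrt\sigma]$.
\item Realize the $k_i$ by a triangle using (ii).
\item Lemma~\ref{lem:edge-bounds} gives edge feasibility in every case.
\item By (i) the area matches.
\end{enumerate}
So the optimizers correspond.

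\textbf{Main obstacle: boundary cases of the reduced problem.} These are points where $k_1=0$ or $k_2=0$, or where the objective is nonpositive. The objective $(k_1+k_2-k_3)^2-4k_1k_2$ can be positive when $k_1=0$. Such a point is realized by an axis-parallel edge; (ii) does not cover it, but a direct construction does. Take $v_2=(x_2,0)$, then solve for $v_3$ with $x_3y_3=k_2$ and $(x_3-x_2)y_3=k_3$, and verify the area identity directly.

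To keep the optimal values equal, the cleanest route is a continuity argument. Perturbing $k_1$ to a small positive value keeps the point inside the box, because $P_{12}>0$ unless $a_1=a_2=0$. The objective is continuous, so the supremum over interior points equals the maximum over the whole box. For the degenerate subcase $a_1=a_2=0$, the box forces $k_1=0$, and there I would use the axis-parallel construction directly.

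Points with nonpositive objective do not correspond to triangles. They are harmless because a positive value is clearly attainable, so they never affect the optimum.
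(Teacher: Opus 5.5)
Your proposal is correct and follows essentially the same route as the paper: the same expansion for (i), the same quadratic-root construction for (ii), and the same two-direction argument via Lemmas~\ref{lem:symmetric-bounds}, \ref{lem:sign-reduction} and~\ref{lem:edge-bounds} for (iii). In (ii), your choice $x_2=1$ with $u=x_3y_2$, $v=x_2y_3$ as the roots of $t^2-st+k_1k_2$ is just the paper's quadratic in $s$ with $t=1$, rescaled by $k_1$.

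You also go further than the paper in two places, and both tighten its argument. First, you handle the boundary points with $k_1=0$ or $k_2=0$, by perturbation or, when $a_1=a_2=0$, by the axis-parallel construction; the paper only converts optimizers with $k_1,k_2>0$ and tacitly relies on Corollary~\ref{cor:reduce-to-a} to know such an optimizer exists. Second, you read the model's constraints as holding for all $\lambda\in[0,1]$, whereas the paper's literal $E^*$ formula is only valid when $\lambda^*\in(0,1)$.
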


\begin{proof}
\emph{Part~(i).} Expanding: $k_1 + k_2 - k_3 = x_2 y_2 + x_3 y_3 - (x_3 - x_2)(y_3 - y_2) = x_3 y_2 + x_2 y_3$. Therefore $(k_1 + k_2 - k_3)^2 - 4k_1 k_2 = (x_3 y_2 + x_2 y_3)^2 - 4x_2 y_2 x_3 y_3 = (x_3 y_2 - x_2 y_3)^2 = 4A^2$.

\emph{Part~(ii).} We construct $v_2$ and $v_3$ explicitly. For any free parameter $t > 0$, set $v_2 = (t,\, k_1/t)$, ensuring $x_2 y_2 = k_1$. Set $v_3 = (s,\, k_2/s)$ for $s \neq 0$ to be determined, ensuring $x_3 y_3 = k_2$. The third edge product is
\[
(x_3 - x_2)(y_3 - y_2) = (s - t)\Bigl(\frac{k_2}{s} - \frac{k_1}{t}\Bigr) = k_1 + k_2 - \frac{sk_1}{t} - \frac{tk_2}{s}.
\]
Setting this equal to $k_3$ and multiplying through by $st$ yields the quadratic in~$s$:
\begin{equation}
\label{eq:realizable-quad}
k_1 s^2 - (k_1 + k_2 - k_3)\,t\,s + k_2 t^2 = 0.
\end{equation}
Since $k_1 > 0$, this is a genuine quadratic with discriminant $\Delta = [(k_1 + k_2 - k_3)^2 - 4k_1 k_2]\,t^2 = 4A^2\,t^2 > 0$, giving two distinct real roots. By Vieta's formulas their product is $k_2 t^2/k_1 > 0$, so both roots are nonzero. Taking either root gives a non-degenerate triangle with the prescribed edge products and area $A > 0$.

\emph{Part~(iii).} We show both directions.

\emph{Model~\eqref{model:optimal-triangle} $\to$ problem~\eqref{eq:reduced}:} Given a feasible triangle for model~\eqref{model:optimal-triangle}, apply Lemma~\ref{lem:symmetric-bounds} (symmetric bounds), Lemma~\ref{lem:sign-reduction} (sign reduction), and Lemma~\ref{lem:invariance_trans} (translation to place one vertex at the origin). Compute $a_i = \sqrt{E_{\max} - d_i}$ and read off the edge products $k_1, k_2, k_3$. By Lemma~\ref{lem:edge-bounds}, the error constraints of model~\eqref{model:optimal-triangle} imply the box constraints~\eqref{eq:box}. By part~(i), the area objective equals $(k_1 + k_2 - k_3)^2 - 4k_1 k_2$. So the feasible set of~\eqref{eq:reduced} contains the image of every feasible triangle, with the same objective value.

\emph{Problem~\eqref{eq:reduced} $\to$ model~\eqref{model:optimal-triangle}:} Given an optimizer $(a_i^*, k_i^*)$ of~\eqref{eq:reduced} with $k_1, k_2 > 0$, part~(ii) constructs a non-degenerate triangle with the prescribed edge products. Setting $d_i = E_{\max} - a_i^{*2}$ recovers deviations in $[E_{\min}, E_{\max}]$. The box constraints~\eqref{eq:box}, via Lemma~\ref{lem:edge-bounds}, guarantee the error constraints of model~\eqref{model:optimal-triangle} are satisfied. By part~(i), the triangle has the same area as the objective value of~\eqref{eq:reduced}.

Thus the geometry of feasible triangles is encoded exactly by the box~\eqref{eq:box} in edge-product space together with the quadratic area formula~\eqref{eq:area-identity}.
\end{proof}

\subsection{Solving the optimization problem}
\label{sec:elem-proof}

We now solve problem~\eqref{eq:reduced} through a sequence of exact reductions: eliminating the edge-product variables via separate convexity, establishing dominance of a particular box vertex, proving symmetry of the optimal slack variables, and certifying the global optimum algebraically.

\subsubsection{Eliminating the edge-product variables}

The first reduction eliminates the continuous optimization over the edge products $k_i$: only finitely many sign-pattern choices remain.

\begin{lemma}[Vertex reduction]
\label{lem:vertex-reduction}
For fixed $a_i$, the maximum of $4A^2$ over the box~\eqref{eq:box} is attained at a vertex of the box.
\end{lemma}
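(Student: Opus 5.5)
The plan is to show that the objective $g(k_1,k_2,k_3) := (k_1+k_2-k_3)^2 - 4k_1k_2$ is \emph{separately convex} in each of its three arguments. For fixed $a_i$, the box~\eqref{eq:box} is a product of three intervals. A function that is convex in each coordinate separately attains its maximum over such a product at a vertex, so this gives the lemma.

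The convexity check is a direct second-derivative computation:
\[
\frac{\partial^2 g}{\partial k_1^2} = 2, \qquad \frac{\partial^2 g}{\partial k_2^2} = 2, \qquad \frac{\partial^2 g}{\partial k_3^2} = 2.
\]
The cross terms only affect joint convexity, not convexity in each variable alone. In fact, $g$ restricted to any coordinate line is a quadratic with leading coefficient $1$, hence convex. Note that $g$ is \emph{not} jointly convex: its Hessian has negative directions, for instance $\partial^2 g/\partial k_1\partial k_2 = 2 - 4 = -2$, which makes the Hessian in $(k_1,k_2)$ equal to $\begin{pmatrix}2&-2\\-2&2\end{pmatrix}$ plus coupling with $k_3$. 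This is why I will argue coordinatewise rather than through a single convexity statement.

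The argument then proceeds iteratively. Take any maximizer $(k_1,k_2,k_3)$; the box is compact and $g$ is continuous, so one exists. Fix $k_2,k_3$. The univariate convex function $k_1 \mapsto g$ attains its maximum over $[0,P_{12}]$ at an endpoint, so we may replace $k_1$ by that endpoint without decreasing $g$. Repeat the same step for $k_2$ over $[0,P_{13}]$ and then for $k_3$ over $[-Q,P_{23}]$. Each replacement keeps the point in the box and does not decrease the objective, so the resulting vertex is also a maximizer.

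One point needs care. The constraint on $k_3$ was stated piecewise in Lemma~\ref{lem:edge-bounds} according to the sign of $k_3$: part (a) for $k_3>0$, part (b) for $k_3<0$, and a vacuous part (c) at $0$. I must confirm that the union of these pieces is exactly the single interval $[-Q,P_{23}]$. This holds because $Q, P_{23} \ge 0$ and $k_3=0$ is always allowed, so the feasible set for $k_3$ is connected and convex.

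Degenerate intervals, for example $P_{12}=0$ when $a_1=a_2=0$, cause no difficulty: there the endpoint is unique. I do not expect any real obstacle; the only subtle point is recognizing that separate (rather than joint) convexity suffices.
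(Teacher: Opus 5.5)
Your proposal is correct and uses the same argument as the paper: $4A^2=(k_1+k_2-k_3)^2-4k_1k_2$ is quadratic with leading coefficient $+1$ in each $k_i$ separately, so replacing one coordinate at a time by the better endpoint of its interval yields a maximizing vertex of the box. One slip in an aside you never use: the $(k_1,k_2)$ block of the Hessian has eigenvalues $0$ and $4$, so it is positive semidefinite, and the direction that actually breaks joint convexity is $(1,1,1)$, along which $g(t,t,t)=-3t^2$.
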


\begin{proof}
$4A^2$ is quadratic in each $k_i$ with leading coefficient $+1$, hence separately convex. A separately convex function on a product of intervals is maximized at a vertex of the box.
\end{proof}

For each fixed $(a_1, a_2, a_3)$, the box~\eqref{eq:box} has eight vertices obtained by setting each $k_i$ to one of its two endpoints. Since interchanging a maximum over a finite set with a supremum over a compact set is valid, it suffices to compare, for each vertex type, the best value obtainable over all deviations. 
We label the eight vertices by their sign pattern. 
Of particular importance is the vertex
$
(k_1, k_2, k_3) = (P_{12}, P_{13}, -Q),
$
which we call \emph{Type~A}. This configuration saturates all edge-product bounds: the two ascending edges are maximized, while the descending edge is minimized. The remaining types (B through E) each set one or more $k_i$ to the opposite endpoint. The following lemma shows that Type~A yields the largest objective value among all box vertices.

\begin{lemma}[Type~A dominance]
\label{lem:tight}
Over all $a_i \in [0, \sqrt{\sigma}]$, the largest value of $4A^2$ among the eight box-vertex objectives is attained by the Type~A vertex $(k_1, k_2, k_3) = (P_{12}, P_{13}, -Q)$.
\end{lemma}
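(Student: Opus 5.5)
The proof will compare suprema. Each non-Type-A box vertex gets a uniform upper bound over all $a_i \in [0,\sqrt{\sigma}]$. Type~A gets an explicit lower bound by evaluating it at one particular choice of slacks. Write $g(k_1,k_2,k_3) := (k_1+k_2-k_3)^2 - 4k_1k_2$. From~\eqref{eq:box-endpoints} and $a_i,b_i \in [0,\sqrt{\sigma}]$ we have the crude bounds $0 \le P_{ij} \le 4\sigma$ and $0 \le Q \le 4\sigma$. Throughout, $\sigma$ is the interval width from Lemma~\ref{lem:symmetric-bounds}.

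\emph{Step 1: lower bound for Type~A.} Motivated by the LHH pattern of \cref{fig:optimal-triangle}, I would take $a_1 = \sqrt{\sigma}$ (so $d_1 = E_{\min}$) and $a_2 = a_3 = \sqrt{\sigma}/3$. Then $b_2 = b_3 = \sqrt{8\sigma/9}$. This gives $P_{12} = P_{13} = 16\sigma/9$ and $Q = 32\sigma/9$. Substituting,
\[
g(P_{12},P_{13},-Q) = \Bigl(\tfrac{64}{9}\Bigr)^2\sigma^2 - 4\Bigl(\tfrac{16}{9}\Bigr)^2\sigma^2 = \tfrac{3072}{81}\sigma^2 \approx 37.9\,\sigma^2 .
\]
So the supremum of the Type~A objective is at least $\tfrac{3072}{81}\sigma^2$.

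\emph{Step 2: vertices with $k_3 = P_{23}$.} Put $s = k_1 + k_2 \ge 0$. Since $s, P_{23} \ge 0$, we have $(s - P_{23})^2 \le \max\{s^2, P_{23}^2\}$. Hence
\[
g \le \max\{(k_1-k_2)^2,\; P_{23}^2 - 4k_1k_2\} \le \max\{(k_1-k_2)^2,\; P_{23}^2\}.
\]
Since $0 \le k_1,k_2 \le 4\sigma$ and $P_{23} \le 4\sigma$, this is at most $16\sigma^2$.

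\emph{Step 3: vertices with $k_3 = -Q$ other than Type~A.} If $k_1 = k_2 = 0$, then $g = Q^2 \le 16\sigma^2$. If exactly one of them vanishes, say $k_1 = 0$ (the other case is symmetric), then $g = (P_{13} + Q)^2$. Use $a_1 \le \sqrt{\sigma}$ and $b_2 \le \sqrt{\sigma}$, then expand with $a_3^2 + b_3^2 = \sigma$ and $a_3 + b_3 \le \sqrt{2\sigma}$:
\[
P_{13} + Q \le (\sqrt{\sigma}+a_3)^2 + (\sqrt{\sigma}+b_3)^2 = 3\sigma + 2\sqrt{\sigma}(a_3+b_3) \le (3+2\sqrt{2})\sigma .
\]
Therefore $g \le (17 + 12\sqrt{2})\sigma^2 \approx 33.97\,\sigma^2$.

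\emph{Conclusion and main obstacle.} Every non-Type-A vertex value is at most $(17+12\sqrt2)\sigma^2 < \tfrac{3072}{81}\sigma^2$. That bound is below the Type~A lower bound from Step~1, which proves the lemma. The delicate case is Step~3: the margin is only about $4\sigma^2$, so the bound $a_3 + b_3 \le \sqrt{2\sigma}$ must be used sharply and not replaced by $2\sqrt{\sigma}$, which would give $49\sigma^2$ and fail. If a cleaner argument is preferred, I would try to optimize $(P_{13}+Q)$ exactly over $a_1, b_2, a_3$. Its maximum is attained at $a_1 = b_2 = \sqrt{\sigma}$ and $a_3 = b_3 = \sqrt{\sigma/2}$, so the estimate above is already tight.
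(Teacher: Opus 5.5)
Your proof is correct and follows the paper's argument. You use the same Type~A witness $a_1=\sqrt{\sigma}$, $a_2=a_3=\sqrt{\sigma}/3$, which gives $4A^2=\tfrac{3072}{81}\sigma^2=\tfrac{1024}{27}\sigma^2$, and the same sharp Types~C bound $P_{13}+Q\le(3+2\sqrt{2})\sigma$ obtained from $a_3+b_3\le\sqrt{2\sigma}$. The only difference is cosmetic: you treat every vertex with $k_3=P_{23}$ (the paper's Type~B, Types~D and half of Types~E) in one step via $(s-P_{23})^2\le\max\{s^2,P_{23}^2\}$, whereas the paper computes Type~B exactly as $-16a_1a_2a_3(a_1+a_2+a_3)\le 0$ and bounds Types~D and~E separately by $16\sigma^2$.
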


\begin{proof}
We exhibit a lower bound for Type~A and upper bounds for each competitor.

\smallskip
\emph{Type~A} $(P_{12}, P_{13}, -Q)$: Taking $a_1 = \sqrt{\sigma}$, $a_2 = a_3 = \sqrt{\sigma/9}$ gives $P_{12} = P_{13} = \tfrac{16\sigma}{9}$, $Q = \tfrac{32\sigma}{9}$, and $4A^2 = \tfrac{1024\sigma^2}{27}$, so
\[
\max_{a_i} A^2\big|_{\text{Type A}} \;\geq\; \tfrac{256}{27}\,\sigma^2.
\]

\emph{Type~B} $(P_{12}, P_{13}, P_{23})$: Since
\[
P_{12} + P_{13} - P_{23} = (a_1{+}a_2)^2 + (a_1{+}a_3)^2 - (a_2{+}a_3)^2 = 2a_1(a_1{+}a_2{+}a_3) - 2a_2 a_3,
\]
a direct expansion gives
\[
4A^2 = (P_{12}{+}P_{13}{-}P_{23})^2 - 4P_{12}P_{13} = -16\,a_1 a_2 a_3(a_1{+}a_2{+}a_3) \leq 0.
\]

\emph{Types~C} ($k_1 = 0$ or $k_2 = 0$, $k_3 = -Q$): Take $k_1 = 0$, $k_2 = P_{13}$, $k_3 = -Q$. Then $4A^2 = (P_{13} + Q)^2$, so it suffices to maximize $P_{13} + Q = (a_1{+}a_3)^2 + (b_2{+}b_3)^2$. Since $a_1 \leq \sqrt{\sigma}$ and $b_2 \leq \sqrt{\sigma}$:
\[
(a_1{+}a_3)^2 + (b_2{+}b_3)^2 \;\leq\; (\sqrt{\sigma}{+}a_3)^2 + (\sqrt{\sigma}{+}b_3)^2 \;=\; 3\sigma + 2\sqrt{\sigma}(a_3{+}b_3).
\]
Since $(a_3 + b_3)^2 \leq 2(a_3^2 + b_3^2) = 2\sigma$ (by Cauchy--Schwarz), we have $a_3 + b_3 \leq \sqrt{2\sigma}$. Therefore $P_{13} + Q \leq \sigma(3 + 2\sqrt{2})$, giving
\[
A^2 \;\leq\; \tfrac{(3+2\sqrt{2})^2}{4}\,\sigma^2 \;=\; \tfrac{17 + 12\sqrt{2}}{4}\,\sigma^2.
\]

\emph{Types~D} ($k_1 = 0$ or $k_2 = 0$, $k_3 = P_{23}$): Take $k_1 = 0$, $k_2 = P_{13}$, $k_3 = P_{23}$. Then
\[
4A^2 = (P_{13} - P_{23})^2 = \bigl[(a_1{-}a_2)(a_1{+}2a_3{+}a_2)\bigr]^2.
\]
Since $|a_1 - a_2| \leq \sqrt{\sigma}$ and $a_1 + 2a_3 + a_2 \leq 4\sqrt{\sigma}$: $4A^2 \leq 16\sigma^2$, so $A^2 \leq 4\sigma^2$.

\emph{Types~E} ($k_1 = k_2 = 0$): $4A^2 = k_3^2$. Since $k_3 \in [-Q, P_{23}]$ and both $Q, P_{23} \leq 4\sigma$ (by Cauchy--Schwarz on $a_i^2 + b_i^2 = \sigma$), $4A^2 \leq 16\sigma^2$, so $A^2 \leq 4\sigma^2$.

\smallskip
Since $\tfrac{256}{27} > \tfrac{17+12\sqrt{2}}{4} > 4$ (numerically $\approx 9.48 > 8.49 > 4$), Type~A dominates.
\end{proof}
\begin{remark}[Geometric interpretation and relation to prior work]
At the Type~A optimum, all edge-product bounds are saturated. By Lemma~\ref{lem:edge-bounds}, these bounds arise from the interior extremum constraints of the edge error functions. Thus, the optimal configuration is characterized by the fact that the extremal error along each edge reaches one of the prescribed bounds at its interior critical point (while endpoint errors may also be active).

This provides an optimization-based counterpart to the geometric arguments used in Atariah et al. \cite{Atariah:2018}, where it is shown that if the interior extremum is not active, one can deform the triangle to increase its area while maintaining feasibility. In the present formulation, this behavior is captured algebraically: increasing an edge product corresponds to stretching the edge, and the bounds $(P_{ij}, Q)$ mark exactly the point where the interior extremum constraint becomes tight. The global optimum is therefore attained when all such constraints are active simultaneously.
\end{remark}
\begin{corollary}
\label{cor:reduce-to-a}
The optimum of~\eqref{eq:reduced} is attained by a Type~A vertex and is realized by an actual triangle. The Type~A witness $a_1 = \sqrt{\sigma}$, $a_2 = a_3 = \sqrt{\sigma/9}$ gives $k_1^* = k_2^* = \tfrac{16\sigma}{9} > 0$ and $4A^{*2} = \tfrac{1024\sigma^2}{27} > 0$, so Lemma~\ref{lem:equivalence}(ii) confirms realizability.
\end{corollary}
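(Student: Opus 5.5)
The corollary has two parts. First, the optimum of \eqref{eq:reduced} is attained at a Type~A vertex. Second, that optimum is realized by an actual triangle.

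\textbf{Step 1: the optimum exists.} The feasible set of \eqref{eq:reduced} is compact. The slacks range over $[0,\sqrt{\sigma}]^3$, and for fixed $a_i$ the edge products range over the box \eqref{eq:box}. The box endpoints $P_{12},P_{13},P_{23},Q$ depend continuously on the $a_i$, because $b_i=\sqrt{\sigma-a_i^2}$ is continuous. Hence the feasible set is the compact graph of a continuous box-valued map, and the continuous objective $4A^2$ attains its maximum.

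\textbf{Step 2: the optimum sits at a Type~A vertex.} Fix any $(a_1,a_2,a_3)$. By Lemma~\ref{lem:vertex-reduction}, the inner maximum over $k$ equals the maximum of eight continuous functions of $a$, one for each vertex type. So the global optimum is the largest of the eight quantities
\[
\max_{a} \,\text{(vertex-type objective)},
\]
each of which is attained by compactness. Lemma~\ref{lem:tight} states that the Type~A quantity strictly exceeds every competitor. Its lower bound is $\tfrac{1024}{27}\sigma^2$ for $4A^2$. The competitors are bounded by $0$ (Type~B), $(17+12\sqrt2)\sigma^2$ (Types~C), and $16\sigma^2$ (Types~D and E), all stated for $4A^2$. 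Therefore any global maximizer must use the Type~A vertex, and the optimal value is at least $\tfrac{1024}{27}\sigma^2>0$.

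\textbf{Step 3: the optimum is realized.} Let $(a^*,k^*)$ be a Type~A maximizer. I need $k_1^*,k_2^*>0$ in order to apply Lemma~\ref{lem:equivalence}(ii). Type~A gives $k_1^*=P_{12}$ and $k_2^*=P_{13}$, so the risk is that one of these vanishes. Suppose, say, $P_{12}=0$, so $k_1^*=0$. Then the point coincides in value with a Type~C vertex, whose objective is at most $(17+12\sqrt2)\sigma^2<\tfrac{1024}{27}\sigma^2$. This contradicts optimality, so $k_1^*,k_2^*>0$. The objective value is also positive, as shown in Step~2. Lemma~\ref{lem:equivalence}(ii) then produces a non-degenerate triangle with these edge products. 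Lemma~\ref{lem:equivalence}(iii) shows this triangle is optimal for model~\eqref{model:optimal-triangle}.

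\textbf{The explicit witness.} I would also check the witness $a_1=\sqrt\sigma$, $a_2=a_3=\sqrt{\sigma/9}$ directly. It gives $b_2=b_3=\sqrt{8\sigma/9}$, hence
\[
P_{12}=P_{13}=\bigl(\tfrac{4}{3}\bigr)^2\sigma=\tfrac{16\sigma}{9},\qquad Q=\tfrac{32\sigma}{9}.
\]
Substituting into \eqref{eq:area-identity} gives
\[
4A^2=\bigl(\tfrac{64\sigma}{9}\bigr)^2-4\bigl(\tfrac{16\sigma}{9}\bigr)^2=\tfrac{3072\sigma^2}{81}=\tfrac{1024\sigma^2}{27},
\]
which is positive. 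By Lemma~\ref{lem:equivalence}(ii), this witness is realizable by an actual triangle.

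The \textbf{main obstacle} is the degenerate case in Step~3, where a Type~A maximizer might have $k_1^*$ or $k_2^*$ equal to zero. The strict separation between the Type~A value and the Type~C bound rules this out.
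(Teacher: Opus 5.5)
Your proof is correct and follows the paper's route: Lemma~\ref{lem:vertex-reduction} reduces the problem to box vertices, the strict separation in Lemma~\ref{lem:tight} singles out Type~A, and Lemma~\ref{lem:equivalence}(ii) gives realizability. Your Step~3 is slightly more careful than the paper. The paper checks $k_1,k_2>0$ and $4A^2>0$ only at the explicit witness, whose optimality is established later (Theorem~\ref{thm:certificate}, Theorem~\ref{thm:combined}); you instead show that any Type~A maximizer has $k_1^*,k_2^*>0$, since otherwise it coincides with a Type~C vertex whose value is strictly below $\tfrac{1024}{27}\sigma^2$.
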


By Corollary~\ref{cor:reduce-to-a}, the optimum of~\eqref{eq:reduced} is attained at the Type~A vertex $(k_1, k_2, k_3) = (P_{12}, P_{13}, -Q)$. Substituting these into the area identity~\eqref{eq:area-identity} eliminates the edge-product variables $k_i$, reducing~\eqref{eq:reduced} to an optimization in the three slack variables alone:
\begin{equation}
\label{eq:3var}
\max_{a_1, a_2, a_3 \in [0, \sqrt{\sigma}]} 4A^2 = (P_{12} - P_{13})^2 + Q\,(Q + 2P_{12} + 2P_{13}).
\end{equation}

\subsubsection{Symmetry: $a_2 = a_3$}

\begin{theorem}
\label{thm:symmetry}
The maximum of~\eqref{eq:3var} is attained at $a_2 = a_3$.
\end{theorem}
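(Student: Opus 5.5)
The plan is to exploit the symmetry of \eqref{eq:3var} under $a_2 \leftrightarrow a_3$ and to show that, along suitable one-parameter curves joining $(a_2,a_3)$ to a symmetric point, the objective increases strictly as $(a_2,a_3)$ approaches the diagonal. Swapping $a_2$ and $a_3$ interchanges $P_{12}$ and $P_{13}$ and leaves $Q=(b_2+b_3)^2$ unchanged, so the objective $G(a_1,a_2,a_3):=(P_{12}-P_{13})^2+Q(Q+2P_{12}+2P_{13})$ is symmetric. The identity $a_i^2+b_i^2=\sigma$ suggests polar coordinates. I would write $a_i=\sqrt{\sigma}\cos\theta_i$ and $b_i=\sqrt{\sigma}\sin\theta_i$ with $\theta_i\in[0,\pi/2]$. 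Then I would split $\theta_2=m+t$ and $\theta_3=m-t$, with mean angle $m$ and half-difference $t$.

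\textbf{Rewriting $G$.} By sum-to-product formulas,
\[
a_2+a_3=2\sqrt{\sigma}\cos m\cos t,\qquad a_2-a_3=-2\sqrt{\sigma}\sin m\sin t,\qquad b_2+b_3=2\sqrt{\sigma}\sin m\cos t .
\]
Also $a_2^2+a_3^2=\sigma(1+\cos 2m\cos 2t)$. Combining these with $P_{12}-P_{13}=(a_2-a_3)(2a_1+a_2+a_3)$ and $P_{12}+P_{13}=2a_1^2+2a_1(a_2+a_3)+a_2^2+a_3^2$, I would express $G$, for fixed $a_1$ and $m$, as an explicit function of $t$. Write $\alpha:=a_1/\sqrt{\sigma}\in[0,1]$, $\tau:=\cos t$, and use $\sin^2 t=1-\tau^2$. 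Then $G/\sigma^2$ becomes a polynomial $g(\tau)$ in $\tau$ with coefficients depending on $\alpha$ and $m$. The admissible range for fixed $m$ is $|t|\le\min(m,\pi/2-m)$, so $\tau$ runs over an interval whose right endpoint is $\tau=1$. The point $\tau=1$, i.e.\ $t=0$, is exactly $a_2=a_3$.

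\textbf{Main step: monotonicity in $\tau$.} The goal is $g'(\tau)>0$ on the admissible interval, or failing that, $g(1)-g(\tau)\ge0$ with equality only at $\tau=1$. The difficulty is that the two parts of $G$ move in opposite directions as $t$ grows. The term $(P_{12}-P_{13})^2=4\sigma^2\sin^2 m\,(1-\tau^2)\,(2\alpha+2\cos m\,\tau)^2$ increases as $t$ moves away from $0$. The term $Q(Q+2P_{12}+2P_{13})$ carries the factor $Q=4\sigma\sin^2m\,\tau^2$ and decreases. My first attempt is to factor $\sin^2 m$ out of $g(1)-g(\tau)$, divide by $(1-\tau)$, and show the remaining polynomial in $(\tau,\alpha,\cos m)$ is positive on $[0,1]^3$. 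The leading contribution should come from the $Q^2$ term, which is of order $16\sin^4 m$. I expect a sum-of-nonnegative-terms grouping to suffice. If it does not, the fallback is to restrict to $\alpha$ near the optimum region ($\alpha=1$ by the Type~A witness). The justification is that for $\alpha$ far from $1$ the crude bound of the kind used in Lemma~\ref{lem:tight} already falls below $\tfrac{256}{27}$, so only a compact subregion needs the sharp inequality. This positivity check is the main obstacle; everything else is bookkeeping.

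\textbf{Concluding.} Given the monotonicity, every feasible triple $(a_1,a_2,a_3)$ is dominated by $(a_1,\bar a,\bar a)$ with $\bar a=\sqrt{\sigma}\cos m$. This point is feasible because $m\in[0,\pi/2]$. Compactness then gives a maximizer of \eqref{eq:3var} on the diagonal. If the inequality is strict for $t\neq0$ (when $\sin m\neq0$), every maximizer satisfies $a_2=a_3$. The degenerate case $\sin m=0$ forces $\theta_2=\theta_3=0$ or $\pi/2$, hence $a_2=a_3$ automatically.
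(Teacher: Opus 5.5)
\paragraph{Assessment.} Your route differs from the paper's and does work. As written, though, the proof is incomplete: the decisive step, monotonicity in $\tau$, is only announced (``I expect a sum-of-nonnegative-terms grouping to suffice''). The fallback you describe does not substitute for it, for three reasons:
\begin{itemize}
\item The crude bounds in Lemma~\ref{lem:tight} concern the competitor vertices B--E, not the Type~A objective at small $\alpha$.
\item Even restricted to a compact subregion, you would still need the sharp inequality there.
\item A global-value argument would not give the pointwise dominance for each fixed $a_1$ that Theorem~\ref{thm:combined} later uses.
\end{itemize}

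\paragraph{The missing step closes immediately.} The computation you set up collapses completely. Write $c:=\cos m$, $s:=\sin m$, $\tau=\cos t$. Then
\[
P_{12}-P_{13}=-4\sqrt{\sigma}\,s\sin t\,(a_1+\sqrt{\sigma}\,c\tau),\qquad
P_{12}+P_{13}=2(a_1+\sqrt{\sigma}\,c\tau)^2+2\sigma s^2\sin^2 t,\qquad
Q=4\sigma s^2\tau^2 .
\]
Using $\sin^2 t+\tau^2=1$, the objective of \eqref{eq:3var} becomes
\[
G=16\sigma^2 s^2\bigl[(\alpha+c\tau)^2+s^2\tau^2\bigr],
\qquad
G\big|_{\tau=1}-G=16\sigma^2 s^2(1-\tau)(1+\tau+2\alpha c).
\]
Since $\alpha,c\ge 0$, this gap is nonnegative. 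It vanishes only when $s=0$ or $\tau=1$, which is exactly what your concluding paragraph needs. No positivity check on $[0,1]^3$ and no restriction on $\alpha$ is required.

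\paragraph{Comparison with the paper's proof.} The paper fixes $a_1$ and the arithmetic mean $\bar a=(a_2+a_3)/2$, and writes $a_{2,3}=\bar a\pm H$. It shows the objective depends only on $h=H^2$, through $Q(h)=2(S-h)+2\sqrt{(S-h)^2-4\bar a^2h}$. It then combines the strict bound $Q(h)<4(S-h)$ with monotonicity of $x\mapsto x(x+4C+4h)$ to obtain the inequality $4A^2(h)<4A^2(0)-16Sh$.

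Your trigonometric substitution instead fixes the mean angle. This rationalizes $b_i=\sqrt{\sigma-a_i^2}$, so you get an exact identity for the loss caused by asymmetry rather than an inequality. At $\tau=1$ it reproduces the paper's $4A^2(0)=16S(S+C)$ with $\bar a$ replaced by $\sqrt{\sigma}\cos m$.

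The one difference is the comparison point: yours is the diagonal point $(a_1,\sqrt{\sigma}\cos m,\sqrt{\sigma}\cos m)$, not $(a_1,\bar a,\bar a)$. This is harmless for the statement and for Theorem~\ref{thm:combined}, because Theorem~\ref{thm:certificate} bounds the objective at every diagonal point. Only the wording of the Type~A step there would change.
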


The intuition is geometric: after the preceding reductions, edges $e_1$ and $e_2$ play symmetric roles (both ascending, both from the origin), so any asymmetry between $a_2$ and $a_3$ should be improvable. The proof makes this precise by showing that $4A^2$ is a strictly decreasing function of the asymmetry $h := (a_2 - a_3)^2/4$.

\begin{proof}
If $\bar{a} := (a_2 + a_3)/2 = 0$ then $a_2 = a_3 = 0$ and the conclusion holds trivially. Assume $\bar{a} > 0$ for the remainder. We fix $a_1 \geq 0$ and show that among all $(a_2, a_3)$ with a given mean $\bar{a} \in (0, \sqrt{\sigma})$, the symmetric choice $a_2 = a_3 = \bar{a}$ is the unique maximizer.

\emph{Setup.}
Write $a_2 = \bar{a} + H$, $a_3 = \bar{a} - H$, so $H = 0$ is the symmetric case. By the $a_2 \leftrightarrow a_3$ symmetry of~\eqref{eq:3var}, we may assume $H \geq 0$. Using the abbreviations
\[
C := (a_1 + \bar{a})^2, \qquad S := \sigma - \bar{a}^2, \qquad h := H^2,
\]
we claim that $Q := (b_2 + b_3)^2$ and the objective $4A^2$ both depend on $H$ only through $h = H^2$, so we may write $Q(h)$ and $4A^2(h)$.

For $Q$: since $b_i^2 = \sigma - a_i^2$, we have $b_2^2 = S - 2\bar{a}H - h$ and $b_3^2 = S + 2\bar{a}H - h$. While $b_2$ and $b_3$ individually depend on $H$, their symmetric functions do not: $b_2^2 + b_3^2 = 2(S-h)$ and $b_2^2 b_3^2 = (S-h)^2 - 4\bar{a}^2 h$, both functions of $h$ alone. Since $Q = b_2^2 + b_3^2 + 2b_2 b_3$, we get
\begin{equation}
\label{eq:Q-of-h}
Q(h) = 2(S-h) + 2\sqrt{(S-h)^2 - 4\bar{a}^2 h},
\end{equation}
a function of $h$ alone.

For $4A^2$: from~\eqref{eq:3var}, $4A^2 = (P_{12} - P_{13})^2 + Q(Q + 2P_{12} + 2P_{13})$. Now $P_{12} - P_{13} = (a_1{+}\bar{a}{+}H)^2 - (a_1{+}\bar{a}{-}H)^2 = 4H(a_1+\bar{a})$, so $(P_{12} - P_{13})^2 = 16hC$. Also $P_{12} + P_{13} = 2C + 2h$. Therefore
\begin{equation}
\label{eq:4A2-h}
4A^2(h) = 16C\,h + Q(h)\bigl[Q(h) + 4C + 4h\bigr],
\end{equation}
a function of $h$ alone (the signed $H$ cancels upon squaring). At $h = 0$: $Q(0) = 4S$ and $4A^2(0) = 16S(S + C)$.

\emph{Key bound.}
The entire proof rests on one strict inequality: for $h > 0$,
\begin{equation}
\label{eq:Q-strict}
Q(h) < 4(S - h).
\end{equation}
From~\eqref{eq:Q-of-h}, $Q = 2(S{-}h) + 2\sqrt{(S{-}h)^2 - 4\bar{a}^2 h}$. Since $S - h > 0$ (because $a_2 \leq \sqrt{\sigma}$ forces $h \leq (\sqrt{\sigma}-\bar{a})^2 < S$) and $4\bar{a}^2 h > 0$, we have $\sqrt{(S{-}h)^2 - 4\bar{a}^2 h} < S - h$, giving $Q < 2(S{-}h) + 2(S{-}h) = 4(S{-}h)$.

\emph{Conclusion.}
Since $Q > 0$ and $4C + 4h \geq 0$, the map $x \mapsto x(x + 4C + 4h)$ is increasing for $x > 0$. Applying~\eqref{eq:Q-strict}:
\[
Q(Q + 4C + 4h) < 4(S{-}h)\bigl(4(S{-}h) + 4C + 4h\bigr) = 16(S-h)(S + C).
\]
Substituting into~\eqref{eq:4A2-h}:
\[
4A^2(h) < 16Ch + 16(S-h)(S+C) = 16S(S+C) - 16Sh = 4A^2(0) - 16Sh.
\]
Since $S > 0$ and $h > 0$, we conclude $4A^2(h) < 4A^2(0)$.
\end{proof}

\begin{remark}[Strict concavity]
\label{rem:concavity}
The proof above shows $4A^2(h) < 4A^2(0)$ for $h > 0$ via an elementary monotonicity argument. Alternatively, one can verify that $4A^2$ is strictly concave in~$h$: differentiating~\eqref{eq:4A2-h} twice and using the identity $(S - h + 2\bar{a}^2)^2 - (b_2 b_3)^2 = 4\bar{a}^2\sigma$ yields $(4A^2)'' = -32\bar{a}^2\sigma(\sigma + a_1^2 + 2a_1\bar{a})/(b_2 b_3)^3 < 0$.
\end{remark}

\subsubsection{Optimality certificate}

With $a_2 = a_3$ established, we write $a := a_1$ and $b := a_2 = a_3$. Problem~\eqref{eq:3var} reduces to a two-variable optimization:
\begin{equation}
\label{eq:2var-final}
\max_{a, b \in [0, \sqrt{\sigma}]} 4A^2 = 16(\sigma - b^2)(\sigma + a^2 + 2ab).
\end{equation}
We solve this by exhibiting an \emph{optimality certificate}: a nonnegative decomposition of the gap between the claimed maximum and the objective, which simultaneously proves the bound and identifies when equality holds.

\begin{theorem}[Optimality certificate]
\label{thm:certificate}
The candidate optimum is $4A^{*2} = \tfrac{1024\sigma^2}{27}$, achieved at $a^* = \sqrt{\sigma}$, $b^* = \sqrt{\sigma}/3$. The gap $\tfrac{1024\sigma^2}{27} - 4A^2$ admits the following nonnegative decomposition for all $a, b \in [0, \sqrt{\sigma}]$:
\begin{equation}
\label{eq:certificate}
\frac{1024\sigma^2}{27} - 4A^2 = \frac{32\sqrt{\sigma}}{27}(3b - \sqrt{\sigma})^2(3b + 5\sqrt{\sigma}) + 16(\sigma - b^2)(\sqrt{\sigma} - a)(\sqrt{\sigma} + a + 2b).
\end{equation}
Both terms on the right are nonnegative on $[0, \sqrt{\sigma}]^2$. Equality holds if and only if $a = \sqrt{\sigma}$ and $b = \sqrt{\sigma}/3$.
\end{theorem}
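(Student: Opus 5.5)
The plan is to verify the identity \eqref{eq:certificate} by direct polynomial expansion, and then read off nonnegativity and the equality case term by term. To keep the algebra clean, I would normalize $\sqrt{\sigma}=1$ by the homogeneous substitution $a=\sqrt{\sigma}\,\alpha$, $b=\sqrt{\sigma}\,\beta$. Both sides of \eqref{eq:certificate} are homogeneous of degree four in $(a,b,\sqrt{\sigma})$, so it suffices to check
\[
\tfrac{1024}{27}-16(1-\beta^2)(1+\alpha^2+2\alpha\beta)=\tfrac{32}{27}(3\beta-1)^2(3\beta+5)+16(1-\beta^2)(1-\alpha)(1+\alpha+2\beta)
\]
for $\alpha,\beta\in[0,1]$.

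\textbf{Step 1 (identity).} Note that $(1-\alpha)(1+\alpha+2\beta)=1+2\beta-\alpha^2-2\alpha\beta$, and that adding it to $1+\alpha^2+2\alpha\beta$ gives $2+2\beta$. The $\alpha$-dependent parts therefore combine, and the identity reduces to a one-variable statement:
\[
\tfrac{1024}{27}-16(1-\beta^2)(2+2\beta)=\tfrac{32}{27}(3\beta-1)^2(3\beta+5).
\]
Expanding the right side gives $\tfrac{32}{27}(27\beta^3+27\beta^2-27\beta+5)=32\beta^3+32\beta^2-32\beta+\tfrac{160}{27}$. Expanding the left side gives $\tfrac{1024}{27}-32(1+\beta-\beta^2-\beta^3)$, which equals the same expression because $\tfrac{1024}{27}-32=\tfrac{160}{27}$. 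This is routine; the only real content is the observation that the $\alpha$-terms telescope.

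\textbf{Step 2 (nonnegativity).} On $[0,1]^2$, each factor in the two terms is nonnegative. In the first term, $(3\beta-1)^2\ge 0$ and $3\beta+5>0$. In the second term, $1-\beta^2\ge 0$, $1-\alpha\ge 0$, and $1+\alpha+2\beta>0$. Hence $4A^2\le \tfrac{1024\sigma^2}{27}$ on the whole domain of \eqref{eq:2var-final}.

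\textbf{Step 3 (equality).} Equality forces both terms to vanish. The first term vanishes only when $\beta=1/3$. At $\beta=1/3$ we have $1-\beta^2=8/9\neq 0$ and $1+\alpha+2\beta>0$, so the second term vanishes only when $\alpha=1$. Conversely, $(\alpha,\beta)=(1,1/3)$ gives $16\cdot\tfrac{8}{9}\cdot\tfrac{16}{9}\cdot\ldots$; more simply, evaluating \eqref{eq:2var-final} there gives $16\cdot\tfrac{8}{9}\cdot(1+1+\tfrac{2}{3})=\tfrac{1024}{27}$, which matches the Type~A witness of Lemma~\ref{lem:tight}.

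The only step I expect to need care is spotting the decomposition. Since it is given, verifying it reduces to the cancellation in Step~1.
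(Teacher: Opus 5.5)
Your proof is correct and takes the paper's route: you split the gap into the $a$-dependent part $16(\sigma-b^2)(\sqrt{\sigma}-a)(\sqrt{\sigma}+a+2b)$ and the one-variable gap at $a=\sqrt{\sigma}$, then read off nonnegativity and the equality case factor by factor. The differences are cosmetic: you normalize $\sqrt{\sigma}=1$ and verify the one-variable identity by direct expansion instead of via the factorization $4C^3-27(C-t)t^2=(3t-2C)^2(3t+C)$, and for equality you first pin $b=\sqrt{\sigma}/3$ from the first term, which neatly avoids the paper's side case $b=\sqrt{\sigma}$ (the unfinished product at the start of your converse check should simply be deleted).
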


\begin{proof}
Let $f(a, b) := 16(\sigma - b^2)(\sigma + a^2 + 2ab)$. At $a = \sqrt{\sigma}$:
\[
f|_{a = \sqrt{\sigma}} = 32\sqrt{\sigma}\,(\sqrt{\sigma} - b)(\sqrt{\sigma} + b)^2,
\]
and using the factorization $4C^3 - 27(C-t)t^2 = (3t - 2C)^2(3t + C)$ with $C = 2\sqrt{\sigma}$, $t = \sqrt{\sigma} + b$:
\begin{equation}
\label{eq:1d-cert}
\tfrac{1024\sigma^2}{27} - f|_{a = \sqrt{\sigma}} = \tfrac{32\sqrt{\sigma}}{27}(3b - \sqrt{\sigma})^2(3b + 5\sqrt{\sigma}).
\end{equation}
For general~$a$:
\begin{equation}
\label{eq:a-gap}
f|_{a = \sqrt{\sigma}} - f = 16(\sigma - b^2)(\sqrt{\sigma} - a)(\sqrt{\sigma} + a + 2b).
\end{equation}
Adding~\eqref{eq:1d-cert} and~\eqref{eq:a-gap} gives~\eqref{eq:certificate}. The first summand is nonneg because $(3b - \sqrt{\sigma})^2 \geq 0$ and $3b + 5\sqrt{\sigma} > 0$; the second because each factor $\sigma - b^2$, $\sqrt{\sigma} - a$, and $\sqrt{\sigma} + a + 2b$ is nonneg on $[0, \sqrt{\sigma}]^2$.

For equality, the second summand must vanish, giving $a = \sqrt{\sigma}$ (the factor $\sigma - b^2$ is positive except at $b = \sqrt{\sigma}$, which makes $4A^2 = 0$). Then the first summand gives $b = \sqrt{\sigma}/3$.
\end{proof}

\subsubsection{Combined certificate for the full problem}

\begin{theorem}[Combined certificate]
\label{thm:combined}
The gap $G := \frac{1024\sigma^2}{27} - 4A^2$ satisfies $G \geq 0$ on the full feasible set of~\eqref{eq:reduced}.
\end{theorem}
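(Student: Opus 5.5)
The plan is to chain the reductions already proved, so that $G \ge 0$ on the full feasible set of~\eqref{eq:reduced} follows from the certificate of Theorem~\ref{thm:certificate}. Fix any feasible point $(a_1,a_2,a_3,k_1,k_2,k_3)$ of~\eqref{eq:reduced}.

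\textbf{Step 1 (box vertices).} By Lemma~\ref{lem:vertex-reduction}, $4A^2$ at this point is at most its value at some vertex of the box~\eqref{eq:box} with the same $a_i$. So it suffices to bound the eight vertex objectives.

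\textbf{Step 2 (non-Type~A vertices).} For the non-Type~A vertices, the proof of Lemma~\ref{lem:tight} already gives explicit bounds valid for all $a_i\in[0,\sqrt{\sigma}]$:
\begin{itemize}
\item Type~B: $4A^2\le 0$;
\item Types~C: $4A^2\le (3+2\sqrt2)^2\sigma^2$;
\item Types~D and~E: $4A^2\le 16\sigma^2$.
\end{itemize}
Each of these is strictly below $\tfrac{1024}{27}\sigma^2\approx 37.9\,\sigma^2$, since $(3+2\sqrt2)^2\approx 33.97$. Hence $G>0$ at those vertices.

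I would double-check that the mirrored variants are covered. Types~C/D with $k_2=0$ instead of $k_1=0$ follow by the $a_2\leftrightarrow a_3$ symmetry. The vertex with $k_1=k_2=0$ is Type~E.

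\textbf{Step 3 (Type~A vertex).} At the Type~A vertex the objective equals~\eqref{eq:3var} as a function of $(a_1,a_2,a_3)$. Apply Theorem~\ref{thm:symmetry}: for fixed $a_1$ and fixed mean $\bar a=(a_2+a_3)/2$, we get $4A^2(h)\le 4A^2(0)$. The symmetric point $a_2=a_3=\bar a$ stays in $[0,\sqrt{\sigma}]$, so the value is bounded by~\eqref{eq:2var-final} evaluated at $(a,b)=(a_1,\bar a)$.

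Two small checks are needed here:
\begin{itemize}
\item At $h=0$, substituting $Q=4(\sigma-\bar a^2)$ and $C=(a_1+\bar a)^2$ into $16S(S+C)$ indeed gives $16(\sigma-b^2)(\sigma+a^2+2ab)$.
\item The edge case $\bar a=\sqrt{\sigma}$ forces $H=0$ and is trivially covered.
\end{itemize}
Theorem~\ref{thm:certificate} then gives $\tfrac{1024\sigma^2}{27}-f(a_1,\bar a)\ge 0$ through the sum-of-nonnegative-terms decomposition~\eqref{eq:certificate}. Chaining the inequalities yields $G\ge 0$.

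\textbf{Main obstacle.} The only delicate point is making sure the chain of reductions is valid pointwise rather than only at optima. Lemma~\ref{lem:vertex-reduction} must be applied with the $a_i$ fixed, and the symmetry step must be a pointwise inequality, which Theorem~\ref{thm:symmetry}'s proof actually provides (it compares $4A^2(h)$ with $4A^2(0)$ at every $h$). I would state these two facts explicitly rather than citing the lemmas only as statements about maximizers.

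Optionally, one could write $G$ as an explicit sum of the Step~2 slack constants, the symmetry gap $16Sh+\bigl(16(S-h)(S+C)-Q(Q+4C+4h)\bigr)$, and~\eqref{eq:certificate}, giving a single combined certificate.
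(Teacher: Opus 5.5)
Your proposal is correct and follows essentially the same route as the paper. You reduce to the eight box vertices via separate convexity in the $k_i$, which the paper phrases equivalently as concavity of $G$ in each $k_i$. You then dispose of Types~B--E with the explicit bounds from Lemma~\ref{lem:tight}, and handle Type~A by the pointwise inequality in the proof of Theorem~\ref{thm:symmetry} followed by the certificate~\eqref{eq:certificate}. Your additional checks (that the symmetry step holds pointwise and not only at maximizers, the $h=0$ substitution, and the edge case $\bar a=\sqrt{\sigma}$) are details the paper leaves implicit.
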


\begin{proof}
The proof has two parts.

\emph{Part 1: Reduction to box vertices.}
Since $4A^2$ is quadratic in each $k_i$ with leading coefficient $+1$, the gap $G$ is quadratic in each $k_i$ with leading coefficient $-1$, hence concave in each $k_i$ separately. A concave function on an interval attains its minimum at an endpoint. Applying this to each $k_i$ in succession: $\min_{\text{box}} G = \min_{\text{8 vertices}} G$. It therefore suffices to show $G(v) \geq 0$ at each of the eight box vertices.

\emph{Part 2: Nonnegativity at each vertex.}
\emph{Type~A} $(P_{12}, P_{13}, -Q)$: By Theorem~\ref{thm:symmetry}, $4A^2(a_1, a_2, a_3)|_{\text{Type A}} \leq 4A^2(a_1, \bar{a}, \bar{a})|_{\text{Type A}} = 16(\sigma - \bar{a}^2)(\sigma + a_1^2 + 2a_1\bar{a})$. Theorem~\ref{thm:certificate} gives $G \geq 0$.

\emph{Type~B}: $4A^2 \leq 0$, so $G \geq \frac{1024\sigma^2}{27}$.

\emph{Types~C}: $A^2 \leq \tfrac{17 + 12\sqrt{2}}{4}\sigma^2$, so $G \geq \tfrac{1024 - 27(17 + 12\sqrt{2})}{27}\sigma^2 > 0$ since $27(17 + 12\sqrt{2}) \approx 917.2 < 1024$.

\emph{Types~D, E}: $A^2 \leq 4\sigma^2$, so $G \geq \tfrac{592\sigma^2}{27}$.
\end{proof}

\subsubsection{Optimal parameters}

\begin{corollary}[Optimal triangle]
\label{cor:optimal}
With $\sigma = E_{\max} - E_{\min}$, the maximum area is
\[
A^* = \frac{16\sqrt{3}}{9}\sigma = \frac{16\sqrt{3}}{9}(E_{\max} - E_{\min}),
\]
achieved at the slack variables $a^* = \sqrt{\sigma}$ and $b^* = \sqrt{\sigma}/3$, corresponding to:
\begin{align*}
k^* = k_1^* = k_2^* &= \frac{16\sigma}{9}, &
w^* = -k_3^* &= \frac{32\sigma}{9}, &
d_1^* &= E_{\min}, &
d_2^* = d_3^* &= \frac{E_{\min} + 8E_{\max}}{9}.
\end{align*}
\end{corollary}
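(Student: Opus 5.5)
The plan is to read off everything from the chain of reductions already established, then translate slack variables back into deviations and edge products.

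First, I would fix the optimal value. By Lemma~\ref{lem:equivalence}(iii), model~\eqref{model:optimal-triangle} and the reduced problem~\eqref{eq:reduced} have the same optimal value. Theorem~\ref{thm:combined} gives $4A^2 \le \tfrac{1024\sigma^2}{27}$ on the whole feasible set of~\eqref{eq:reduced}. The Type~A witness of Corollary~\ref{cor:reduce-to-a} attains this bound and is realizable. Hence
\[
A^{*2} = \tfrac{256\sigma^2}{27}, \qquad A^* = \tfrac{16}{3\sqrt{3}}\sigma = \tfrac{16\sqrt{3}}{9}\sigma .
\]
Lemma~\ref{lem:symmetric-bounds} shows that only $\sigma = E_{\max}-E_{\min}$ enters, so this is the claimed formula.

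Second, I would identify the optimizer. For the Type~A vertex, Theorem~\ref{thm:symmetry} forces $a_2 = a_3$, with strict inequality otherwise. Theorem~\ref{thm:certificate} then forces $a = a_1 = \sqrt{\sigma}$ and $b = a_2 = a_3 = \sqrt{\sigma}/3$. The other vertex types are strictly suboptimal by the strict numerical gaps in Lemma~\ref{lem:tight}. Substituting into~\eqref{eq:box-endpoints}:
\begin{itemize}
\item $k_1^* = P_{12} = (\sqrt{\sigma}+\sqrt{\sigma}/3)^2 = \tfrac{16\sigma}{9}$, and likewise $k_2^* = \tfrac{16\sigma}{9}$.
\item Since $b_2 = b_3 = \sqrt{\sigma - \sigma/9} = \tfrac{2\sqrt{2\sigma}}{3}$, we get $Q = (2b_2)^2 = \tfrac{32\sigma}{9}$, so $w^* = -k_3^* = \tfrac{32\sigma}{9}$.
\end{itemize}
As a consistency check, $(k_1+k_2-k_3)^2 - 4k_1k_2 = (\tfrac{64\sigma}{9})^2 - 4(\tfrac{16\sigma}{9})^2 = \tfrac{3072\sigma^2}{81} = \tfrac{1024\sigma^2}{27}$.

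Third, I would recover the deviations by inverting~\eqref{eq:slack-def}, $d_i = E_{\max} - a_i^2$, in the original (unshifted) bounds. The shift in Lemma~\ref{lem:symmetric-bounds} changes only the deviations, by a constant, and the slack variables are shift-invariant. This gives
\[
d_1^* = E_{\max} - \sigma = E_{\min}, \qquad d_2^* = d_3^* = E_{\max} - \tfrac{\sigma}{9} = \tfrac{E_{\min} + 8E_{\max}}{9}.
\]

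The only delicate point is making sure the slack-to-deviation translation is done in the original rather than centered bounds, and that the sign normalization of Lemma~\ref{lem:sign-reduction} is compatible with the stated deviation pattern. I would handle this by noting that the slacks are defined directly from $E_{\max}, E_{\min}$, and that the corollary describes the optimizer in its normalized form ($k_1, k_2 \ge 0$).
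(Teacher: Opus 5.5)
Your proposal is correct and follows essentially the same route as the paper: take the equality case of the certificate in Theorem~\ref{thm:certificate} (after the symmetry reduction of Theorem~\ref{thm:symmetry}), invert $d_i = E_{\max} - a_i^2$, and substitute into $P_{12}$ and $Q$. The only difference is that you state explicitly what the paper leaves implicit, namely that Theorem~\ref{thm:combined} gives the global upper bound and the realizable Type~A witness of Corollary~\ref{cor:reduce-to-a} attains it; this adds rigor but does not change the approach.
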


\begin{proof}
From the certificate (Theorem~\ref{thm:certificate}), the optimal slack variables are $a^* = a_1^* = \sqrt{\sigma}$ and $b^* = a_2^* = a_3^* = \sqrt{\sigma}/3$.
Recovering deviations via $d_i = E_{\max} - a_i^2$: $d_1^* = E_{\max} - \sigma = E_{\min}$ and $d_2^* = d_3^* = E_{\max} - \sigma/9 = (E_{\min} + 8E_{\max})/9$.
Edge products: $k^* = (a^* + b^*)^2 = (4\sqrt{\sigma}/3)^2 = 16\sigma/9$. For $w^*$: $b_2^{\text{slack}} = b_3^{\text{slack}} = \sqrt{\sigma - (b^*)^2} = \sqrt{8\sigma/9}$, so $w^* = (2b_2^{\text{slack}})^2 = 32\sigma/9$. The maximum squared area is $4A^{*2} = \tfrac{1024\sigma^2}{27}$, giving $A^* = \tfrac{16\sqrt{3}}{9}\sigma$.
\end{proof}

By Lemma~\ref{lem:equivalence}(ii), this optimum is realized by an actual triangle. The pE-normalization (Corollary~\ref{cor:pe-normalization}) shows the optimal triangle can be placed in the symmetric form $(x_2, y_2) = (y_3, x_3)$ with $x_2 > y_2 > 0$.

\begin{remark}[Geometric interpretation of the optimal triangle]
\label{rem:equilateral-pe}
The optimal triangles from Corollary~\ref{cor:optimal} have two notable geometric properties.

\emph{Pseudo-Euclidean equilateral property.} In the pseudo-Euclidean plane with metric $ds^2 = dx\,dy$, the ``pseudo-length'' of an edge is the product of its coordinate differences. The two ascending edges of the optimal triangle have equal pseudo-length $k^*$, and the descending edge has pseudo-length $-w^* = -2k^*$. This ``equilateral'' characterization was first noted by Pottmann et al.\ \cite{Pottmann:2000} for interpolations and extended by Atariah et al.\ \cite{Atariah:2018}.

\emph{Hyperbola touch property.} The non-origin vertices of the optimal triangle lie on the hyperbola $xy = k^*$, where $k^* = \tfrac{32\varepsilon}{9}$ for the general case and $k^* = \tfrac{8\varepsilon}{3}$ for the continuous case. This follows from $k_1 = k_2 = k^*$ at optimality: both ascending edge products equal the same value, so both vertices satisfy $x_i y_i = k^*$. This property is related to the observation of Atariah et al.\ that optimal triangles ``touch'' a hyperbola; the specific hyperbola depends on the approximation type.
\end{remark}

\subsection{Scaling to specific cases}
\label{sec:scaling}

Corollary~\ref{cor:optimal} immediately yields the optimal densities for all approximation types:
\begin{center}
\renewcommand{\arraystretch}{1.3}
\begin{tabular}{lcccc}
\toprule
Type & $E_{\min}$ & $E_{\max}$ & $\sigma = E_{\max} - E_{\min}$ & Optimal Area \\
\midrule
General & $-\varepsilon$ & $\varepsilon$ & $2\varepsilon$ & $\tfrac{32\sqrt{3}}{9}\varepsilon$ \\
Underestimation & $-\varepsilon$ & $0$ & $\varepsilon$ & $\tfrac{16\sqrt{3}}{9}\varepsilon$ \\
Overestimation & $0$ & $\varepsilon$ & $\varepsilon$ & $\tfrac{16\sqrt{3}}{9}\varepsilon$ \\
\bottomrule
\end{tabular}
\end{center}

\textbf{General approximations} ($\sigma = 2\varepsilon$): The optimal area is $A^* = \frac{32\sqrt{3}}{9}\varepsilon$, giving triangle density $\frac{3\sqrt{3}}{32\varepsilon}$.

The optimal deviations are $d_1^* = -\varepsilon$ and $d_2^* = d_3^* = \frac{-\varepsilon + 8\varepsilon}{9} = \frac{7\varepsilon}{9}$.

\textbf{Under/overestimations} ($\sigma = \varepsilon$): The optimal area is $A^* = \frac{16\sqrt{3}}{9}\varepsilon$, giving triangle density $\frac{3\sqrt{3}}{16\varepsilon}$. See Figure~\ref{fig:under-over-triangle} for an illustration.

For underestimation: $d_1^* = -\varepsilon$, $d_2^* = d_3^* = \frac{-\varepsilon + 0}{9} = -\frac{\varepsilon}{9}$.

For overestimation: $d_1^* = 0$, $d_2^* = d_3^* = \frac{0 + 8\varepsilon}{9} = \frac{8\varepsilon}{9}$.

\textbf{Triangle coordinates (general case).} For completeness, we give the explicit vertex coordinates of the optimal triangle in the pE-normalized symmetric form from Corollary~\ref{cor:pe-normalization}. From $w^* = \tfrac{64\varepsilon}{9}$, we get $(x_2-y_2)^2 = w^* = \tfrac{64\varepsilon}{9}$, so $x_2 - y_2 = \tfrac{8\sqrt{\varepsilon}}{3}$. Combined with $x_2 y_2 = k^* = \tfrac{32\varepsilon}{9}$:
\[
x_2 = \tfrac{4}{3}(1+\sqrt{3})\sqrt{\varepsilon}, \quad y_2 = \tfrac{4}{3}(\sqrt{3}-1)\sqrt{\varepsilon}, \quad (x_3, y_3) = (y_2, x_2).
\]

\begin{remark}[Geometric similarity of optimal triangles]
\label{rem:similarity}
The general and continuous optimal triangles are \emph{geometrically similar}---they have the same shape but different sizes (\cref{fig:optimal-triangle-comparison}). Both have aspect ratio:
\[
\frac{x_2}{y_2} = \frac{1+\sqrt{3}}{\sqrt{3}-1} = 2 + \sqrt{3} \approx 3.73.
\]
The general case triangle is scaled by factor $\tfrac{2}{\sqrt{3}} \approx 1.155$ relative to the continuous case, yielding area ratio $\tfrac{4}{3}$ (33\% larger). This explains why the improvement from continuous to general approximations is modest compared to the improvement from regular grids: the optimal \emph{shape} is already achieved by continuous approximations; relaxing continuity only allows a larger \emph{scale}.

The interpolation triangle has a different shape with aspect ratio $\tfrac{3+\sqrt{5}}{2} = \varphi + 1 \approx 2.62$, where $\varphi$ is the golden ratio. The under/overestimation triangles also have aspect ratio $2 + \sqrt{3} \approx 3.73$, the same as the general and continuous cases. Indeed, all non-interpolation optimal triangles satisfy $w/k = 2$ (where $w = (x_2-y_2)^2$ and $k = x_2 y_2$). Setting $r = x_2/y_2$ and solving $(r-1)^2/r = w/k = 2$ yields $r^2 - 4r + 1 = 0$, hence $r = 2+\sqrt{3}$. Only the interpolation case, where $\Delta = 0$ forces $\lambda = 1/2$ and gives $w/k = 1$ instead, leads to the different aspect ratio $(3+\sqrt{5})/2$.
\end{remark}

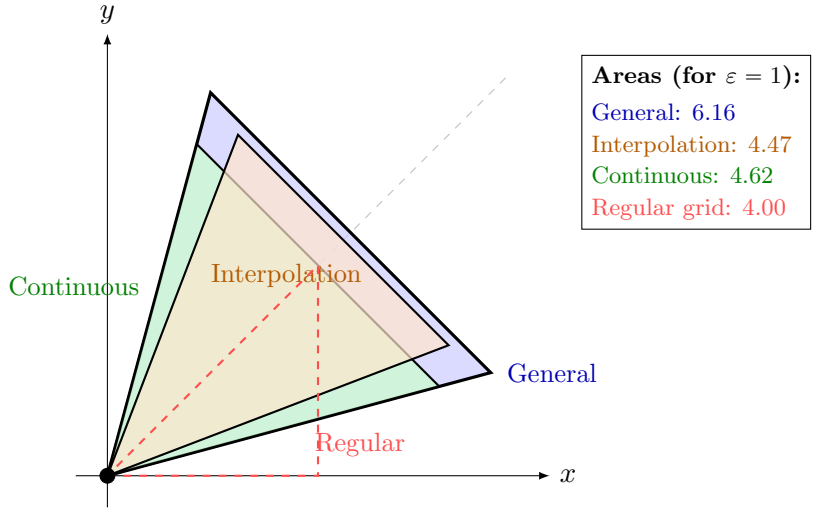
\begin{figure}[ht]
\centering
\begin{tikzpicture}[scale=1.4, >=latex]
  \draw[->] (-0.3,0) -- (4.2,0) node[right] {$x$};
  \draw[->] (0,-0.3) -- (0,4.2) node[above] {$y$};
  
  \draw[dashed, gray!50] (0,0) -- (3.8,3.8);

  \draw[very thick, fill=blue!20, fill opacity=0.7] (0,0) -- (3.64,0.98) -- (0.98,3.64) -- cycle;
  \node[blue!70!black, font=\small, anchor=west] at (3.7,0.98) {General};
  
  \draw[thick, fill=green!20, fill opacity=0.7] (0,0) -- (3.15,0.85) -- (0.85,3.15) -- cycle;
  \node[green!50!black, font=\small, anchor=east] at (0.4,1.8) {Continuous};
  
  \draw[thick, fill=orange!20, fill opacity=0.7] (0,0) -- (3.24,1.24) -- (1.24,3.24) -- cycle;
  \node[orange!70!black, font=\small] at (1.7,1.9) {Interpolation};
  
  \draw[thick, dashed, red!70] (0,0) -- (2.0,0) -- (2.0,2.0) -- cycle;
  \node[red!70, font=\small] at (2.4,0.3) {Regular};
  
  \filldraw (0,0) circle (2pt);
  
  \node[draw, fill=white, align=left, font=\footnotesize, anchor=north west] at (4.5,4.0) {
    \textbf{Areas (for $\varepsilon=1$):}\\[3pt]
    \textcolor{blue!70!black}{General: $6.16$}\\[1pt]
    \textcolor{orange!70!black}{Interpolation: $4.47$}\\[1pt]
    \textcolor{green!50!black}{Continuous: $4.62$}\\[1pt]
    \textcolor{red!70}{Regular grid: $4.00$}
  };
  
\end{tikzpicture}
\caption{Comparison of optimal triangles for different approximation types with error bound $\varepsilon$. The general and continuous optimal triangles (blue and green) are \emph{geometrically similar}---they have the same shape (aspect ratio $2+\sqrt{3} \approx 3.73$) but the general case is scaled by factor $2/\sqrt{3} \approx 1.15$, yielding 33\% larger area. The interpolation triangle (orange) has a different shape with aspect ratio $(3+\sqrt{5})/2 \approx 2.62$, related to the golden ratio. All optimal triangles significantly outperform the regular grid (red dashed).}
\label{fig:optimal-triangle-comparison}
\end{figure}

\begin{figure}[ht]
\centering
\begin{tikzpicture}[scale=1.8, >=latex]
  \draw[->] (-0.3,0) -- (3.0,0) node[right] {$x$};
  \draw[->] (0,-0.3) -- (0,3.0) node[above] {$y$};
  
  \draw[dashed, gray!50] (0,0) -- (2.6,2.6);
  
  \draw[very thick, fill=purple!20, fill opacity=0.8] (0,0) -- (2.576,0.690) -- (0.690,2.576) -- cycle;

  \filldraw (0,0) circle (1.5pt) node[below left] {$v_1$};
  \filldraw (2.576,0.690) circle (1.5pt) node[right] {$v_2$};
  \filldraw (0.690,2.576) circle (1.5pt) node[above] {$v_3$};
  
  \node[draw, fill=white, align=left, font=\small] at (3.5,2.2) {
    \textbf{Underestimation:}\\[2pt]
    $d_1 = -\varepsilon$\\[1pt]
    $d_2 = d_3 = -\tfrac{\varepsilon}{9}$\\[3pt]
    \textbf{Overestimation:}\\[2pt]
    $d_1 = 0$\\[1pt]
    $d_2 = d_3 = \tfrac{8\varepsilon}{9}$\\[3pt]
    Area $= \tfrac{16\sqrt{3}}{9}\varepsilon \approx 3.08\varepsilon$
  };
  
  \node[font=\footnotesize, align=center] at (1,0.7) {Same triangle\\shape for \\both cases};
  
\end{tikzpicture}
\caption{Optimal triangle for underestimation and overestimation. Both cases yield the same triangle geometry but with different vertex deviations. This triangle has aspect ratio $2+\sqrt{3} \approx 3.73$, identical to the general and continuous cases---the one-sided constraint changes the \emph{scale} (area is halved) but not the optimal \emph{shape}. For underestimation, all deviations are non-positive ($d_i \leq 0$); for overestimation, all are non-negative ($d_i \geq 0$). The optimal area is exactly half that of the general case.}
\label{fig:under-over-triangle}
\end{figure}
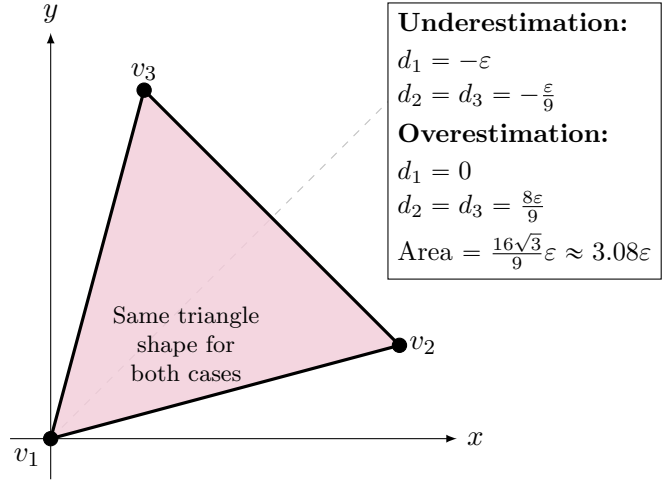

\section{Proofs of Main Theorems}
\label{sec:proofs}

We now assemble the results from the preceding sections to prove the main theorems. The key ingredients are: the reduction to a single triangle (Section~\ref{sec:reduction}), the edge-error formula (Section~\ref{sec:error-analysis}), and the slack-variable reformulation and optimality certificates (Sections~\ref{sec:slack-reformulation}--\ref{sec:elem-proof}).

\subsection{General and one-sided approximations}

\begin{proof}[Proof of \cref{thm:optimal-triangulation} (General \pwl approximations)]
\label{thm:optimal-triangulation-proof}
We prove that the optimal triangle density for $\varepsilon$-approximations of $F(x,y)=xy$ is $\tfrac{3\sqrt{3}}{32}\tfrac{1}{\varepsilon}$.

\textbf{Upper bound (achievability):} By Lemma~\ref{lem:area_density}, it suffices to construct a triangle $T$ with area $A = \tfrac{32\sqrt{3}}{9}\varepsilon$ that admits a linear $\varepsilon$-approximation. The triangle with vertices and deviations given in Section~\ref{sec:scaling} has area $\tfrac{32\sqrt{3}}{9}\varepsilon$ and satisfies all error constraints with equality $\pm\varepsilon$. By Lemma~\ref{lem:invariance_trans} and Lemma~\ref{lem:invariance_reflection}, we can tile the entire plane with copies and reflections of this triangle, achieving triangle density $\tfrac{3\sqrt{3}}{32}\tfrac{1}{\varepsilon} = \tfrac{1}{A}$.

\textbf{Lower bound (optimality):} Let $\TT$ be any $\varepsilon$-triangulation of $F(x,y) = xy$, with underlying triangulation $\{T_i\}$. For each triangle $T_i$, the restriction of the approximating function to $T_i$ is affine with pointwise error at most $\varepsilon$, so $T_i$ is individually a feasible instance of the single-triangle optimization problem (model~\eqref{model:optimal-triangle}).

The slack-variable reformulation (Section~\ref{sec:slack-reformulation}) transforms each feasible triangle into problem~\eqref{eq:reduced} with $\sigma = 2\varepsilon$. Section~\ref{sec:elem-proof} then establishes the global upper bound $4A^2 \leq \tfrac{1024\sigma^2}{27}$ through the following chain:
\begin{enumerate}
\item Separate convexity reduces to box-vertex maximization (Lemma~\ref{lem:vertex-reduction}).
\item Type~A dominance (Lemma~\ref{lem:tight}) reduces to the 3-variable problem~\eqref{eq:3var}.
\item Symmetry (Theorem~\ref{thm:symmetry}) gives $a_2 = a_3$, reducing to 2 variables.
\item The algebraic certificate (Theorem~\ref{thm:certificate}) proves $4A^2 \leq \tfrac{1024\sigma^2}{27}$ with equality only at $(a^*, b^*) = (\sqrt{\sigma}, \sqrt{\sigma}/3)$.
\item The combined certificate (Theorem~\ref{thm:combined}) verifies this bound holds at all 8 box vertices, hence over the full feasible set.
\end{enumerate}
With $\sigma = 2\varepsilon$, every feasible triangle satisfies $A(T_i) \leq A^* = \tfrac{32\sqrt{3}}{9}\varepsilon$.

Since the triangles $\{T_i\}$ have pairwise disjoint interiors and each has area at most $A^*$, the number of triangles contained in any square $Q_R = [-R,R]^2$ satisfies $|I_R| \geq \operatorname{area}(Q_R)/A^* = 4R^2/A^*$. Triangles meeting the boundary of $Q_R$ contribute a lower-order correction (at most $O(R)$ triangles, since $\TT$ is locally finite), so the triangle density satisfies $\delta(\TT) \geq 1/A^* = \tfrac{3\sqrt{3}}{32}\tfrac{1}{\varepsilon}$.

\textbf{Verification:} We verified this solution by (i) checking that the optimal triangle with $d_1 = -\varepsilon$, $d_2 = d_3 = 7\varepsilon/9$, and $k = x_2 y_2 = 32\varepsilon/9$ satisfies all original constraints of model~\eqref{model:optimal-triangle} with equalities $E_i^* = \pm\varepsilon$; (ii) confirming the area formula $A = \tfrac{32\sqrt{3}}{9}\varepsilon$ using symbolic computation; and (iii) solving the original nonconvex optimization problem numerically with multiple starting points, all converging to the same optimal value.
\end{proof}

\begin{proof}[Proof of \cref{thm:optimal-triangulation-under} (\Pwl underestimations/overestimations)]
\label{thm:optimal-triangulation-under-proof}

\textbf{Upper bound (achievability):} By Lemma~\ref{lem:area_density}, it suffices to exhibit a single feasible triangle. The triangle from Section~\ref{sec:scaling} with $\sigma = \varepsilon$ has area $\tfrac{16\sqrt{3}}{9}\varepsilon$ and satisfies all error constraints. By Lemma~\ref{lem:area_density}, this yields a triangulation with density $\tfrac{3\sqrt{3}}{16}\tfrac{1}{\varepsilon}$.

\textbf{Lower bound (optimality):} Let $\TT$ be any pwl $\varepsilon$-underestimation of $F(x,y) = xy$ (the overestimation case is symmetric). On each triangle $T_i$ of the triangulation, the restriction is an affine function with $f|_{T_i} \leq F$ and $F - f|_{T_i} \leq \varepsilon$, so the deviations satisfy $d_j \in [-\varepsilon, 0]$ for all vertices $j$ of $T_i$. Hence $T_i$ is a feasible instance of the single-triangle problem with $E_{\min} = -\varepsilon$ and $E_{\max} = 0$, giving $\sigma = \varepsilon$.

By Corollary~\ref{cor:optimal} with $\sigma = \varepsilon$, every such feasible triangle satisfies $A(T_i) \leq A^* = \tfrac{16\sqrt{3}}{9}\varepsilon$. The same area-counting argument as in \cref{thm:optimal-triangulation} then gives $\delta(\TT) \geq 1/A^* = \tfrac{3\sqrt{3}}{16}\tfrac{1}{\varepsilon}$.

The optimal parameters are:
\begin{align*}
\text{Underestimation:} \quad & d_1 = -\varepsilon, \; d_2 = d_3 = -\tfrac{\varepsilon}{9}, \; k = \tfrac{16}{9}\varepsilon, \; \lambda = \tfrac{3}{4}.\\
\text{Overestimation:} \quad & d_1 = 0, \; d_2 = d_3 = \tfrac{8}{9}\varepsilon, \; k = \tfrac{16}{9}\varepsilon, \; \lambda = \tfrac{3}{4}.
\end{align*}

The optimal triangle density is $\tfrac{1}{A^*} = \tfrac{3\sqrt{3}}{16}\tfrac{1}{\varepsilon}$.
\end{proof}

\subsection{Continuous approximations}

\begin{proof}[Proof of \cref{thm:optimal-triangulation-cont} (Continuous \pwl approximations --- parallelogram tilings)]
\label{thm:optimal-triangulation-cont-proof}
For continuous approximations, the deviations at shared vertices must be equal across adjacent triangles. We prove the optimal triangle density is $\tfrac{\sqrt{3}}{8\varepsilon}$.

\textbf{Step 1: Continuity forces constant deviation.}

By Lemma~\ref{lem:area_density}, the tiling is constructed from a triangle $T$ with vertices $v_1, v_2, v_3$ (where $v_1 = (0,0)$) and its point-reflection $T'$ with vertices $-v_1, -v_2, -v_3$. To form a parallelogram that tiles the plane, $T'$ is then translated by the vector $v_2 + v_3$ (which equals twice the midpoint between $v_2$ and $v_3$). The resulting parallelogram $P = T \cup T'$ has vertices $v_1 = (0,0)$, $v_2$, $v_3$, and $v_2 + v_3$, and tiles the plane by translation along the vectors $v_2$ and $v_3$. See Figure~\ref{fig:parallelogram-tiling} for an illustration.

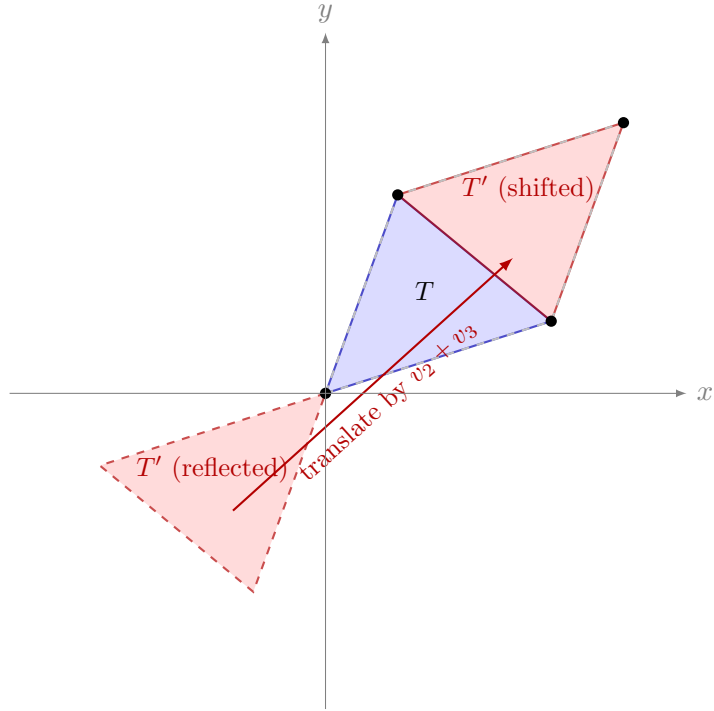
\begin{figure}[ht]
\centering
\begin{tikzpicture}[scale=1.2, >=latex,
    triangle/.style={thick, draw=blue!70!black, fill=blue!20, opacity=0.7},
    triangle_reflected/.style={thick, draw=red!70!black, fill=red!20, opacity=0.7},
    vertex/.style={circle, fill=black, inner sep=1.5pt},
    label/.style={font=\small}]
\coordinate (v1) at (0,0);
\coordinate (v2) at (2.5,0.8);
\coordinate (v3) at (0.8,2.2);

\draw[triangle] (v1) -- (v2) -- (v3) -- cycle;
\node[label] at ($(v1)!0.3!(v2) + (0.35,+.9)$) {$T$};

\coordinate (v1r) at (-0, -0);
\coordinate (v2r) at (-2.5, -0.8);
\coordinate (v3r) at (-0.8, -2.2);
\draw[triangle_reflected, dashed] (v1r) -- (v2r) -- (v3r) -- cycle;
\node[label, red!70!black] at ($(v1r)!0.3!(v2r) + (-.5,-.6)$) {$T'$ (reflected)};

\coordinate (shift) at ($(v2) + (v3)$);
\coordinate (v1t) at ($(v1r) + (shift)$);
\coordinate (v2t) at ($(v2r) + (shift)$);
\coordinate (v3t) at ($(v3r) + (shift)$);
\draw[triangle_reflected] (v1t) -- (v2t) -- (v3t) -- cycle;
\node[label, red!70!black] at ($(v1t)!0.3!(v2t) + (-.3,-0.5)$) {$T'$ (shifted)};

\draw[thick, dashed, gray!50] (v1) -- (v2) -- (shift) -- (v3) -- cycle;

\node[vertex, label={[label] below left:$v_1$}] at (v1) {};
\node[vertex, label={[label] right:$v_2$}] at (v2) {};
\node[vertex, label={[label] above:$v_3$}] at (v3) {};
\node[vertex, label={[label] above right:$v_2+v_3$}] at (shift) {};

\draw[->, thick, red!70!black] ($(v1r)!0.5!(v2r)!0.5!(v3r)$) -- ($(v1t)!0.5!(v2t)!0.5!(v3t) + (-.2,-.2)$) node[midway, below, sloped, label] {translate by $v_2+v_3$};

\draw[->, thin, gray] (-3.5,0) -- (4,0) node[right] {$x$};
\draw[->, thin, gray] (0,-3.5) -- (0,4) node[above] {$y$};
\end{tikzpicture}
\caption{Forming a parallelogram from triangle $T$ and its point-reflection $T'$. Triangle $T$ has vertices $v_1 = (0,0)$, $v_2$, and $v_3$. The point-reflection $T'$ (dashed red) has vertices $-v_1$, $-v_2$, $-v_3$. Translating $T'$ by the vector $v_2 + v_3$ forms a parallelogram (solid red) with vertices $v_1$, $v_2$, $v_3$, and $v_2 + v_3$. This parallelogram tiles the plane by translation along $v_2$ and $v_3$.}
\label{fig:parallelogram-tiling}
\end{figure}

In this tiling, each vertex position is shared by six triangles---three copies of $T$ and three copies of $T'$---meeting in a hexagonal pattern.  For the PWL approximation to be continuous, the function value $f(v) = F(v) + d$ must be the same in all triangles sharing vertex $v$.

We now trace the deviation identifications at each vertex of the fundamental parallelogram.

Recall that the reflected triangle $T'$ (before translation) has vertices $-v_1 = 0$, $-v_2$, $-v_3$, and by Lemma~\ref{lem:invariance_reflection}, the deviation at $-v_i$ in $T'$ equals $d_i$ (the deviation at $v_i$ in $T$). Translating $T'$ by $v_2 + v_3$ preserves deviations (Lemma~\ref{lem:invariance_trans}), yielding the shifted triangle with vertices $v_2 + v_3$ (deviation~$d_1$), $v_3$ (deviation~$d_2$), and $v_2$ (deviation~$d_3$).

\emph{Constraint at $v_2$:} Triangle $T$ assigns deviation $d_2$ at $v_2$. The shifted $T'$ within the same parallelogram assigns deviation $d_3$ at $v_2$. Continuity requires $d_2 = d_3$.

\emph{Constraint at $v_2$ from adjacent parallelogram:} The parallelogram translated by $-v_3$ has its vertex $v_2 + v_3$ at position $v_2 + v_3 - v_3 = v_2$. The shifted $T'$ within that parallelogram assigns deviation $d_1$ at its $v_2 + v_3$ vertex (as derived above). Therefore, continuity at position $v_2$ additionally requires $d_2 = d_1$.

Combining: $d_1 = d_2 = d_3 =: \Delta$.

\textbf{Step 2: Optimization with constant deviation.}
With $d_1 = d_2 = d_3 = \Delta$, the error analysis simplifies. For an ascending edge from $(0,0)$ to $(x_2, y_2)$:

By Lemma~\ref{lem:max_error} with $d_1 = d_2 = \Delta$, we have $\lambda^* = \frac{\Delta - \Delta}{2x_2y_2} + \frac{1}{2} = \frac{1}{2}$.

The interior error is:
\[
E_{interior} = (1-\tfrac{1}{2})\Delta + \tfrac{1}{2}\Delta + \tfrac{1}{2}(1-\tfrac{1}{2})x_2 y_2 = \Delta + \tfrac{1}{4}x_2 y_2.
\]

For the constraint $E_{interior} \leq \varepsilon$: $\Delta + \tfrac{1}{4}x_2 y_2 \leq \varepsilon$, so $x_2 y_2 \leq 4(\varepsilon - \Delta)$.

For the descending edge from $(x_2, y_2)$ to $(x_3, y_3)$ with $(x_2, y_2) = (y_3, x_3)$:
\[
E_{descending} = \Delta - \tfrac{1}{4}(x_2 - y_2)^2.
\]
For the constraint $E_{descending} \geq -\varepsilon$: $\Delta - \tfrac{1}{4}(x_2 - y_2)^2 \geq -\varepsilon$, so $(x_2 - y_2)^2 \leq 4(\varepsilon + \Delta)$.

\textbf{Step 3: Maximizing area.}
The area is $A = \tfrac{1}{2}(x_2^2 - y_2^2) = \tfrac{1}{2}(x_2 - y_2)(x_2 + y_2)$.

Let $p = x_2 + y_2$ and $q = x_2 - y_2$. Then $x_2 y_2 = \tfrac{1}{4}(p^2 - q^2)$.

Constraints become:
\begin{align*}
\tfrac{1}{4}(p^2 - q^2) &\leq 4(\varepsilon - \Delta), \\
q^2 &\leq 4(\varepsilon + \Delta).
\end{align*}

Area: $A = \tfrac{1}{2}pq$.

To maximize $A$, both constraints should be tight. From the second: $q = 2\sqrt{\varepsilon + \Delta}$.

From the first with equality: $p^2 = q^2 + 16(\varepsilon - \Delta) = 4(\varepsilon + \Delta) + 16(\varepsilon - \Delta) = 20\varepsilon - 12\Delta$.

So $p = \sqrt{20\varepsilon - 12\Delta}$ (taking positive root).

Area: $A(\Delta) = \tfrac{1}{2} \cdot 2\sqrt{\varepsilon + \Delta} \cdot \sqrt{20\varepsilon - 12\Delta} = \sqrt{(\varepsilon + \Delta)(20\varepsilon - 12\Delta)}$.

To maximize, take derivative with respect to $\Delta$:
\[
\frac{dA^2}{d\Delta} = (20\varepsilon - 12\Delta) - 12(\varepsilon + \Delta) = 20\varepsilon - 12\Delta - 12\varepsilon - 12\Delta = 8\varepsilon - 24\Delta.
\]
Setting to zero: $\Delta^* = \tfrac{\varepsilon}{3}$.

\textbf{Step 4: Computing optimal area.}
With $\Delta = \tfrac{\varepsilon}{3}$:
\begin{align*}
q &= 2\sqrt{\varepsilon + \tfrac{\varepsilon}{3}} = 2\sqrt{\tfrac{4\varepsilon}{3}} = \tfrac{4}{\sqrt{3}}\sqrt{\varepsilon}, \\
p &= \sqrt{20\varepsilon - 4\varepsilon} = \sqrt{16\varepsilon} = 4\sqrt{\varepsilon}, \\
A^* &= \tfrac{1}{2}pq = \tfrac{1}{2} \cdot 4\sqrt{\varepsilon} \cdot \tfrac{4}{\sqrt{3}}\sqrt{\varepsilon} = \tfrac{8}{\sqrt{3}}\varepsilon = \tfrac{8\sqrt{3}}{3}\varepsilon.
\end{align*}

The optimal triangle density is $\tfrac{1}{A^*} = \tfrac{\sqrt{3}}{8\varepsilon} = \tfrac{3\sqrt{3}}{24}\tfrac{1}{\varepsilon}$.
\end{proof}

The proof above establishes optimality among parallelogram tilings with constant deviation. A natural question is whether non-parallelogram tilings might achieve better density through varying deviations. We conjecture that this is not possible.

\begin{conjecture}[Optimality extends to all continuous triangulations]
\label{conj:continuous-optimality}
Let $\TT$ be any locally finite triangulation of $\R^2$ and let $f$ be a continuous \pwl $\varepsilon$-approximation of $F(x,y) = xy$ with respect to $\TT$. Then the triangle density satisfies $\delta(\TT) \geq \tfrac{\sqrt{3}}{8\varepsilon}$.
\end{conjecture}

The following observation provides partial evidence: the optimal general triangles from \cref{cor:optimal} cannot themselves be assembled into a continuous triangulation.

\begin{remark}[Optimal general triangles cannot tile continuously]
\label{lem:no-optimal-tiling}
We sketch an obstruction showing that there is no locally finite triangulation $\TT$ of $\R^2$ and continuous \pwl $\varepsilon$-approximation $f$ of $F(x,y)=xy$ with respect to $\TT$ such that every triangle $T \in \TT$ achieves the optimal general area $A^*_{\mathrm{gen}} = \tfrac{32\sqrt{3}}{9}\varepsilon$.

Suppose for contradiction that such a triangulation exists. By \cref{cor:optimal}, every triangle must have deviations $d_1 = -\varepsilon$ and $d_2 = d_3 = \tfrac{7\varepsilon}{9}$ (the LHH pattern, up to relabeling). By symmetry, the HLL pattern ($d_1 = \varepsilon$, $d_2 = d_3 = -7\varepsilon/9$) yields the same geometric conclusion. Since $f$ is continuous, each vertex $v$ has a single well-defined deviation $d(v) = f(v) - F(v)$. The vertices partition into two classes:
\[
L = \{v : d(v) = -\varepsilon\} \quad\text{(low)}, \qquad H = \{v : d(v) = \tfrac{7\varepsilon}{9}\} \quad\text{(high)}.
\]
Every triangle has exactly one vertex in $L$ and two in $H$, so $L$ is an independent set (no $L$-$L$ edges).

Now consider the geometric constraints. For each $L$-vertex $u$ at position $(p,q)$, the optimal triangle geometry (\cref{cor:optimal}) requires the two $H$-vertices of each incident triangle to lie on the translated hyperbola
\[
\mathcal{H}_u : (x-p)(y-q) = k^* = \tfrac{32\varepsilon}{9}.
\]
Therefore, \emph{all} $H$-vertices adjacent to $u$ lie on $\mathcal{H}_u$.

Consider two distinct $L$-vertices $u_1$ and $u_2$ at positions $(p_1, q_1)$ and $(p_2, q_2)$, and suppose they share an $H$-vertex $v$. Then $v$ lies on both $\mathcal{H}_{u_1}$ and $\mathcal{H}_{u_2}$. Eliminating $y$ shows that the $x$-coordinates of intersection points satisfy
\begin{equation}
\label{eq:hyperbola-intersection}
(q_1 - q_2)(x - p_1)(x - p_2) = k^*(p_2 - p_1),
\end{equation}
a quadratic equation (when $q_1 \neq q_2$), so $\mathcal{H}_{u_1}$ and $\mathcal{H}_{u_2}$ share at most two points. When $q_1 = q_2$, the equation degenerates to $k^*(p_2 - p_1) = 0$, which has no solution for $p_1 \neq p_2$, meaning the hyperbolas do not intersect at all.

In a planar triangulation, each $L$-vertex $u$ has degree at least~$3$ (and average degree~$6$ by Euler's formula), and its incident triangles form a fan with $H$-vertices $h_1, h_2, \ldots, h_m$ ($m \geq 3$) arranged cyclically on~$\mathcal{H}_u$. Across each $H$-$H$ edge $\{h_i, h_{i+1}\}$, there is an adjacent triangle with a different $L$-vertex~$u'_i$, and $\{h_i, h_{i+1}\} \subseteq \mathcal{H}_{u'_i}$. Since any two distinct hyperbolas $\mathcal{H}_u$ and $\mathcal{H}_{u'}$ share at most two points, these constraints severely restrict the combinatorial structure. We believe a careful counting argument based on the at-most-two-intersection property yields a contradiction, but a complete proof requires a detailed case analysis of the planar graph structure that we leave to future work.
\end{remark}

A complete proof of \cref{conj:continuous-optimality} remains open. The following remark discusses the remaining gap.

\begin{remark}[Remaining gap]
\label{rem:conjecture-evidence}
Remark~\ref{lem:no-optimal-tiling} provides a strong obstruction against assembling a continuous triangulation from optimal general triangles \emph{exclusively}. To prove the full conjecture, one must also rule out triangulations where individual triangles use non-constant deviations to achieve area exceeding $A^*_{\mathrm{const}} = \tfrac{8\sqrt{3}}{3}\varepsilon$, even though the average area cannot.

The key difficulty is an averaging argument. For any triangle with constant deviations $d_1 = d_2 = d_3 = \Delta$, the maximum area is $A(\Delta) = \sqrt{(\varepsilon + \Delta)(20\varepsilon - 12\Delta)}$, maximized at $\Delta^* = \varepsilon/3$ with value $A^*_{\mathrm{const}}$. Triangles can only exceed this bound if their deviations are non-constant, requiring a ``spread'' between the lowest and highest vertex deviations. Continuity constrains this spread globally: a vertex with low deviation must serve as the low vertex in all incident triangles, creating a bottleneck that limits how many triangles can simultaneously benefit.

One approach to close the gap is a charging argument: define a gain function $g(T) = A(T) - A^*_{\mathrm{const}}$ and show that continuity constraints force $\sum_{T \in \TT_R} g(T) \leq o(|\TT_R|)$ for any bounded region~$R$. We leave the development of such an argument to future work.
\end{remark}

\begin{remark}
If \cref{conj:continuous-optimality} holds, then the bound is achieved uniquely by the parallelogram tiling with constant deviation $\Delta = \varepsilon/3$, confirming that Atariah et al.'s construction \cite{Atariah:2018} is optimal among all continuous approximations.
\end{remark}

\begin{remark}[Why computational verification of the conjecture is infeasible]
\label{rem:computational-infeasibility}
Verifying \cref{conj:continuous-optimality} computationally over finite patches is intractable because the optimization landscape is highly nonconvex. For a continuous triangulation over a bounded domain with $n$ vertices, the optimization problem over vertex positions and deviations involves $O(n^2)$ free vertex positions in $\R^2$, subject to continuity constraints and nonlinear error expressions. This leads to a nonconvex global optimization problem with $O(n^2)$ variables and nonlinear constraints, making exhaustive search or even local optimization over moderate problem sizes impractical. Moreover, boundary effects and the lack of translational symmetry in bounded domains mean that small-scale experiments may not generalize to infinite tilings. Therefore, resolving the conjecture fully likely requires a theoretical approach rather than computational verification.
\end{remark}

\begin{remark}[Unified framework recovers all cases]
\label{rem:unified}
The slack-variable framework from Section~\ref{sec:slack-reformulation} provides a unified treatment of all cases. At the Type~A optimum with $a_2 = a_3 =: b$, we have $4A^2 = 16(\sigma - b^2)(\sigma + a^2 + 2ab)$ where $a := a_1$. Setting $c = \sqrt{\sigma}$, this is $16(c^2 - b^2)(c^2 + a^2 + 2ab)$, and specializing to the different approximation types yields:

\textbf{Continuous approximations.} The constraint $d_1 = d_2 = d_3 = \Delta$ translates to $a = b$ (since all slack variables are equal). With this constraint, the objective becomes $16(c^2 - a^2)(c^2 + 3a^2)$, a single-variable optimization:
\[
\frac{d}{da}\left[(c^2 - a^2)(c^2 + 3a^2)\right] = 2a(4c^2 - 12a^2) = 0 \quad\Rightarrow\quad a^* = \frac{c}{\sqrt{3}}.
\]
Then $\Delta^* = \varepsilon - a^{*2} = \varepsilon - c^2/3 = \varepsilon/3$, recovering the result of Atariah et al.\ \cite{Atariah:2018}.

\textbf{Why $\Delta^* = \varepsilon/3$ specifically?} The objective $(c^2 - a^2)(c^2 + 3a^2)$ has the form $X \cdot Y$ where $X = c^2 - a^2$ and $Y = c^2 + 3a^2$. Since $X + Y = 2c^2 + 2a^2$ is not constant in $a$, this is not a direct AM-GM problem. However, the derivative condition $4c^2 = 12a^2$ shows that at the optimum, $Y = 3X$ (i.e., the factors are in ratio 1:3). This yields $a^{*2} = c^2/3 = 2\varepsilon/3$, and hence $\Delta^* = \varepsilon - 2\varepsilon/3 = \varepsilon/3$.

\textbf{Interpolation.} Setting $\Delta = 0$ gives $a = b = \sqrt{\varepsilon}$. With $c^2 = 2\varepsilon$:
\[
A^2 = 4(c^2 - a^2)(c^2 + 3a^2) = 4(\varepsilon)(5\varepsilon) = 20\varepsilon^2, \quad A = 2\sqrt{5}\varepsilon,
\]
recovering the result of Pottmann et al.\ \cite{Pottmann:2000}.

\textbf{Summary.} Continuous approximations correspond to the constraint $a = b$, interpolation to the specific point $a = b = \sqrt{\varepsilon}$, and general approximations to optimization over independent $a, b$ with the optimal ratio $a^*:b^* = 3:1$.
\end{remark}

\subsection{Summary of optimal solutions}

Table~\ref{tab:optimal-solutions} provides the detailed optimal triangle parameters for all approximation types. Combined with Table~\ref{tab:intro-summary} in the introduction, this gives a complete characterization of the optimal solutions.

\begin{table}[ht]
\centering
\renewcommand{\arraystretch}{1.4}
\begin{tabular}{|l|c|c|c|c|c|}
\hline
Method & $k = x_2 y_2$ & $d_1$ & $d_2 = d_3$ & $\lambda_1$ & $\lambda_3$ \\
\hline
General & $\frac{32}{9}\varepsilon$ & $-\varepsilon$ & $\frac{7}{9}\varepsilon$ & $\frac{3}{4}$ & $\frac{1}{2}$ \\
\hline
Continuous & $\frac{8}{3}\varepsilon$ & $\frac{1}{3}\varepsilon$ & $\frac{1}{3}\varepsilon$ & $\frac{1}{2}$ & $\frac{1}{2}$ \\
\hline
Interpolation & $4\varepsilon$ & $0$ & $0$ & $\frac{1}{2}$ & $\frac{1}{2}$ \\
\hline
Overestimation & $\frac{16}{9}\varepsilon$ & $0$ & $\frac{8}{9}\varepsilon$ & $\frac{3}{4}$ & $\frac{1}{2}$ \\
\hline
Cont.\ Overest. & $\frac{4}{3}\varepsilon$ & $\frac{2}{3}\varepsilon$ & $\frac{2}{3}\varepsilon$ & $\frac{1}{2}$ & $\frac{1}{2}$ \\
\hline
Underestimation & $\frac{16}{9}\varepsilon$ & $-\varepsilon$ & $-\frac{1}{9}\varepsilon$ & $\frac{3}{4}$ & $\frac{1}{2}$ \\
\hline
Cont.\ Underest. & $\frac{4}{3}\varepsilon$ & $-\frac{1}{3}\varepsilon$ & $-\frac{1}{3}\varepsilon$ & $\frac{1}{2}$ & $\frac{1}{2}$ \\
\hline
\end{tabular}
\caption{Summary of optimal triangle parameters for different approximation types. The parameter $\lambda_1$ is the convex combination parameter at which the maximum error is attained on each ascending edge (the same for both edges $e_1$ and $e_2$, since $d_2 = d_3$ implies $k_1 = k_2$); $\lambda_3$ is the corresponding parameter for the descending edge $e_3$.}
\label{tab:optimal-solutions}
\end{table}

The table reveals several structural patterns:
\begin{itemize}
\item Non-continuous cases achieve $\lambda_1 = 3/4$, while continuous cases have $\lambda_1 = 1/2$.
\item The ratio $a^*:b^* = 3:1$ (equivalently $\lambda_1 = 3/4$) characterizes all non-continuous optimal solutions.
\item Continuous approximations force $d_1 = d_2 = d_3$, which constrains $\lambda_1 = 1/2$.
\item Under/overestimation have exactly half the area of general approximation in both continuous and non-continuous cases.
\end{itemize}

\begin{remark}[Non-continuity of optimal approximations]
The optimal general approximation from \cref{thm:optimal-triangulation} has $d_1 = -\varepsilon$ and $d_2 = d_3 = \tfrac{7}{9}\varepsilon$. In a parallelogram tiling, each vertex plays the same role (e.g., always ``vertex~1'') in every triangle containing it, so a single well-defined deviation is assigned at each vertex. Since the optimal deviations are not all equal, adjacent triangles sharing a vertex would require different deviation values at that vertex, making the resulting approximation necessarily discontinuous. For general (non-parallelogram) tilings, the situation is more subtle: a vertex could serve as the low-deviation vertex in one incident triangle and a high-deviation vertex in another, potentially allowing continuity. Remark~\ref{lem:no-optimal-tiling} shows that even this more flexible setting cannot produce a continuous tiling from optimal general triangles exclusively, though the full \cref{conj:continuous-optimality} remains open.
\end{remark}  

\section{Conclusion}
\label{sec:conclusion}

We have determined optimal triangulations for piecewise linear approximations of indefinite quadratic functions over the plane, resolving the open question left by Atariah et al.\ \cite{Atariah:2018} regarding whether their constant-deviation construction was optimal. Our main result establishes that the optimal triangle density for general \pwl $\varepsilon$-approximations of $F(x,y) = xy$ is $\tfrac{3\sqrt{3}}{32\varepsilon}$, representing a 25\% reduction compared to their construction. The key insight is that \emph{varying} deviations at vertices---specifically $d_1 = -\varepsilon$ and $d_2 = d_3 = 7\varepsilon/9$---yield strictly larger triangles than the constant deviation $\Delta = \varepsilon/3$ used by Atariah et al. We also proved optimality for several important special cases: continuous approximations among parallelogram tilings (and conjectured this extends to all continuous triangulations), interpolations, and one-sided (under/over) estimations.
 
The key technical contribution is a reformulation via slack variables that transforms the nonlinear optimization into a structured problem over box-constrained edge products, followed by a sequence of exact reductions: separate convexity eliminates the edge-product variables, Type~A dominance reduces to three slack variables, a strict inequality proves symmetry ($a_2 = a_3$), and an algebraic optimality certificate yields the closed-form solution with $a^* = \sqrt{\sigma}$, $b^* = \sqrt{\sigma}/3$. A combined certificate (Theorem~\ref{thm:combined}) verifies global optimality by checking nonnegativity of the gap function at all box vertices. 
 
Our results have implications for computational applications involving bilinear terms in optimization. Compared to regular axis-aligned grids, optimal triangulations achieve 54\% larger triangles in the general case and 131\% larger in the continuous case (see Section~\ref{sec:intro}), suggesting that optimally-oriented triangulations could significantly reduce the computational cost of \pwl approximation schemes for bilinear functions.

Several directions remain open for future work. First, proving or disproving \cref{conj:continuous-optimality}---that the parallelogram tiling optimum extends to all continuous triangulations---would complete our understanding of the continuous case. Second, extending these results to higher dimensions---optimal simplicial approximations in $\R^n$---would be valuable for applications in higher-dimensional optimization. Third, the bounded domain case, where boundary effects may permit lower densities than periodic tilings, remains largely unexplored. Fourth, approximating more general indefinite quadratic forms beyond saddle surfaces (e.g., hyperbolic paraboloids with different principal curvatures) may reveal additional structure.

\bibliographystyle{plain}  
\bibliography{references_updated.bib}

\end{document}